\numberwithin{equation}{section}
\def\bb#1\eb{\textcolor{blue}
{#1}} %
\def\br#1\er{\textcolor{red}
{#1}} %
\def\bv#1\ev{\textcolor{green}
{#1}} %
\def\bc#1\ec{\textcolor{cyan}
{#1}} %
\def\Xint#1{\mathchoice 
  {\XXint\displaystyle\textstyle{#1}}% 
  {\XXint\textstyle\scriptstyle{#1}}% 
  {\XXint\scriptstyle\scriptscriptstyle{#1}}% 
  {\XXint\scriptscriptstyle\scriptscriptstyle{#1}}% 
  \!\int} 
\def\XXint#1#2#3{{\setbox0=\hbox{$#1{#2#3}{\int}$} 
  \vcenter{\hbox{$#2#3$}}\kern-.5\wd0}} 
\def\-int{\Xint -}
\newcommand{\R}{\mathbb{R}}
\newcommand{\N}{\mathbb{N}}
\newcommand{\e}{\varepsilon}
\DeclareMathOperator{\dist}{dist}
\DeclareMathOperator{\supp}{supp}
\newtheorem{lem}{Lemma}
\newtheorem{thm}{Theorem}
\theoremstyle{remark}
\begin{document}

%\title{Multiple solutions for a nonlinear scalar field equation involving the Fractional Laplacian}
%\title[fractional scalar field equation]{On nonlinear scalar field equations involving the Fractional Laplacian}
\title{Mountain pass solutions for the fractional Berestycki-Lions problem}
\author{Vincenzo Ambrosio}
\address{Dipartimento di Scienze Pure e Applicate (DiSPeA),
Universit\`a degli Studi di Urbino `Carlo Bo'
Piazza della Repubblica, 13
61029 Urbino (Pesaro e Urbino, Italy)}
\email{vincenzo.ambrosio@uniurb.it}
%\address{Dipartimento di Matematica e Applicazioni ``R. Caccioppoli"\\
%         Universit\`a degli Studi di Napoli Federico II\\
 %        via Cinthia, 80126 Napoli, Italy}
%\email{vincenzo.ambrosio2@unina.it}
\keywords{Fractional Laplacian; least energy solutions; mountain-pass theorem; zero mass case}
\subjclass[2010]{35A15, 35J60, 35R11, 45G05, 49J35}

\begin{abstract}
We investigate the existence of least energy solutions and infinitely many solutions for the following nonlinear   
fractional equation
\begin{align*}
(-\Delta)^{s} u = g(u) \mbox{ in } \R^{N}, 
\end{align*}
%where $s\in (0,1)$, $N\geq 2$, $(-\Delta)^{s}$ is the fractional Laplacian and $g: \R \rightarrow \R$ is an odd continuous function satisfying Berestycki-Lions type assumptions. The proof is based on the symmetric mountain pass approach developed by Hirata, Ikoma and Tanaka in \cite{HIT}.
where $s\in (0,1)$, $N\geq 2$, $(-\Delta)^{s}$ is the fractional Laplacian and $g: \R \rightarrow \R$ is an odd $\mathcal{C}^{1, \alpha}$ function satisfying Berestycki-Lions type assumptions. The proof is based on the symmetric mountain pass approach developed by Hirata, Ikoma and Tanaka in \cite{HIT}.
Moreover, by combining the mountain pass approach and an approximation argument, we also prove the existence of a positive radially symmetric solution for the above problem when $g$ satisfies suitable growth conditions which make our problem fall in the so called ``zero mass" case.
\end{abstract}

\maketitle

\section{Introduction}

\noindent
In this paper we deal with the following fractional nonlinear scalar field equation with fractional diffusion
\begin{align}\label{P}
(-\Delta)^{s} u = g(u) \mbox{ in } \R^{N} 
\end{align}
with $s\in (0,1)$, $N\geq 2$, and $(-\Delta)^{s}$ is the fractional Laplacian which may be defined along a function $u$ belonging to the Schwartz space $ \mathcal{S}(\R^{N})$ of rapidly decaying functions as
$$
(-\Delta)^{s}u(x)=C_{N,s} P. V. \int_{\R^{N}} \frac{u(x)-u(y)}{|x-y|^{N+2s}} dy  \quad (x\in \R^{N}).
$$
The symbol P.V. stands for the Cauchy principal value and $C_{N,s}$ is a normalizing constant whose value can be found in  \cite{DPV}.\\
In the last decade, a great attention has been devoted to the study of elliptic equations involving fractional powers of the Laplacian. This type of operators appears in a quite natural way in many different contexts, such as, the thin obstacle problem, optimization, finance, phase transitions,  anomalous diffusion, crystal dislocation, soft thin films, flame propagation, conservation laws, ultra-relativistic limits of quantum mechanics, quasi-geostrophic flows, minimal surfaces, materials science and water waves. 
Since we cannot review the huge literature here, we refer the interested reader to \cite{DPV, MRS}, where a more extensive bibliography and an introduction to this topic are given.\\
Equation (\ref{P}) arises in the study of standing wave solutions for the fractional Schr\"odinger equation 
$$
\imath \frac{\partial \psi}{\partial t}=(-\Delta)^{s} \psi(t, x)+g(\psi)  \mbox{ in } \R^{N}\times\R
$$
when looking for standing waves, that is, solutions having the form $\psi(x, t) =e^{-\imath \omega t}u(x)$, where $\omega$ is a constant.
Such equation is fundamental in fractional quantum mechanics, and it has been introduced by Laskin \cite{Laskin1, Laskin2} as a result of expanding the Feynman path integral, from the Brownian like to the L\'evy like quantum mechanical paths. 

\noindent

Recently, the study of the fractional Schr\"odinger equation has attracted a lot of interest from many mathematicians.
Felmer et al. \cite{FQT} studied the existence, regularity and qualitative properties of positive solutions to 
$(-\Delta)^{s}u+u=f(x, u)$ when $f$ has a subcritical growth and satisfies the Ambrosetti-Rabinowitz condition; see also \cite{DPPV}. 
Coti Zelati and Nolasco \cite{CTN1, CTN2} investigated positive stationary solutions for a class of nonlinear pseudo-relativistic Schr\"odinger equations involving the operator $\sqrt{-\Delta+m^{2}}$ with $m>0$; see also \cite{A, A1, FallFelli, N.Ikoma} for related models. 
Secchi \cite{Secchi1, Secchi2} proved several existence results for some nonlinear fractional Schr\"odinger equations under the assumptions that the nonlinearity is either of perturbative type or satisfies the Ambrosetti-Rabinowitz condition. 
Molica Bisci and R\u{a}dulescu \cite{MBR} obtained the existence of multiple ground state solutions for a class of parametric fractional Schr\"odinger equations. 
%Cheng \cite{Cheng} proved the existence of bound state solutions for $(-\Delta)^{s}u+V(x)u=|u|^{p-1}u$ in which the potential $V(x)$ is unbounded and  $1<p<\frac{4s}{N}+1$.
Frank et al. \cite{FLS} established uniqueness and nondegeneracy of ground state solutions to $(-\Delta)^{s}u+u=|u|^{\alpha}u$ for all $H^{s}$-admissible powers $\alpha \in (0, \alpha^{*})$.
Pucci et al. \cite{P} dealt with the existence of multiple solutions for the nonhomogeneous fractional $p$-Laplacian equations of Schr\"odinger-Kirchhoff type,  by using the Ekeland variational principle and the Mountain Pass theorem. Figueiredo and Siciliano \cite{FS} investigated existence and multiplicity of positive solutions for a class of fractional Schr\"odinger equation by means of the Ljusternick-Schnirelmann and Morse theory.
%Torres [Torres] studied  a perturbation of (\ref{P}) when $p(x)\rightarrow +\infty$ as $|x|\rightarrow \infty$ and $f(x,u)=g(u)+h(x)$ with $f$ satisfying (AR) and $h$ is a perturbation. 
%Feng \cite{Feng} uses a concentration compactness principle in fractional Sobolev spaces to prove the existence of ground states to (\ref{P}) with $V$ asymptotically linear and $f(x,u)=|u|^{p-1}u$ with $0<p<\frac{4s}{N-2s}$.
%Liu and Gan [LG] studied a perturbed fractional Schrdinger equation with critical nonlinearity.

\smallskip
In the fundamental papers \cite{BL1, BL2}, Berestycki and Lions studied the existence and the multiplicity of nontrivial solutions to (\ref{P}) when $s=1$, that is
\begin{equation}\label{NSE}
-\Delta u = g(u) \, \mbox{ in } \, \R^{N}, 
\end{equation}
where $N \geq 3$ and $g: \R \rightarrow \R$ is an odd continuous function such that
\begin{compactenum}[({\rm BL}1)]
\item $\displaystyle{-\infty <\liminf_{t\rightarrow 0^{+}} \frac{g(t)}{t} \leq \limsup_{t\rightarrow 0^{+}} \frac{g(t)}{t}=-m<0}$;
\item $\displaystyle{-\infty <\limsup_{t\rightarrow +\infty} \frac{g(t)}{t^{\frac{N+2}{N-2}}} \leq 0}$;
\item There exists  $\xi_{0}>0$ such that  $\displaystyle{G(\xi_{0})= \int_{0}^{\xi_{0}} g(\tau) d\tau >0}$. 
\end{compactenum}
To obtain the existence of a positive solution to (\ref{NSE}), the authors  in \cite{BL1} developed a subtle Lagrange multiplier procedure which ultimately relies on the Pohozaev's identity for (\ref{NSE}). The lack of compactness due to the translational invariance of (\ref{NSE}) is regained working in the subspace of $H^{1}(\R^{N})$ of radially symmetric functions. They also proved the existence of a positive solution to \eqref{NSE} when $m=0$, the so called zero mass case. \\
In \cite{BL2} the authors showed that (\ref{NSE}) possesses infinitely many distinct bound states by applying the genus theory to the even functional $V(u)=\int_{\R^{N}} G(u) \, dx$ defined on the symmetric manifold $M=\{ u\in H^{1}_{rad}(\R^{N}): \|\nabla u\|_{L^{2}(\R^{N})}=1\}$.
%, and obtain  infinitely many  critical points of $V$.\\
Subsequently, the existence and multiplicity of solutions of closely related problems to (\ref{NSE}) have been extensively studied by many authors; see for instance \cite{ASM1, ADP, BGK, JT1, JT2}.\\ 
The question that naturally arises is whether or not the above classical existence and multiplicity results for equation (\ref{NSE}) still hold in the nonlocal framework of (\ref{P}). Chang and Wang \cite{CW} obtained the existence of a positive solution to (\ref{P}) by combining the monotonicity trick of Struwe-Jeanjean \cite{J, Struwe} and the Pohozaev identity for the fractional Laplacian.\\
The first aim of this paper is to answer the question with respect to the existence of infinitely many solutions for the equation (\ref{P}). 
%Moreover, we also consider the zero mass case, that is when $g'(0)=0$. In this way, we are able to complement and improve the result proved in \cite{CW}.
We also provide a mountain-pass characterization of least energy solutions to (\ref{P}) in the spirit of the result obtained in \cite{JT2}. In this way, we are able to complement and improve the result proved in \cite{CW}.\\
We recall that $u$ is a least energy solution to \eqref{P} if and only if
$$
I(u)=m \mbox{ and } m=\inf\left\{I(u); u\in H^{s}(\R^{N})\setminus\{0\} \mbox{ and } I'(u)=0\right\},
$$
where $I: H^{s}(\R^{N})\rightarrow \R$ is the energy functional associated to \eqref{P}, that is
$$
I(u)= \frac{1}{2} \iint_{\R^{2N}} \frac{|u(x)-u(y)|^{2}}{|x-y|^{N+2s}} \, dxdy - \int_{\R^{N}} G(u) \, dx.  
$$ 

\noindent
Now we state the main assumptions on the nonlinearity $g$.
We will assume that $g\in \mathcal{C}^{1, \alpha}(\R, \R)$, with $\alpha>\max\{0, 1-2s\}$, is odd and satisfies the following properties
%Along the paper, we will assume that $g\in \mathcal{C}^{0}(\R, \R)$ is odd and satisfies the following properties
\begin{compactenum}[({\it g}1)]
\item $\displaystyle{-\infty <\liminf_{t\rightarrow 0^{+}} \frac{g(t)}{t} \leq \limsup_{t\rightarrow 0^{+}} \frac{g(t)}{t}=-m<0}$;
\item $\displaystyle{-\infty <\limsup_{t\rightarrow +\infty} \frac{g(t)}{t^{2^{*}_{s}-1}} \leq 0}$, where $\displaystyle{2^{*}_{s}=\frac{2N}{N-2s}}$;
\item There exists  $\xi_{0}>0$ such that  $\displaystyle{G(\xi_{0})= \int_{0}^{\xi_{0}} g(\tau) d\tau >0}$. 
\end{compactenum}
We note that differently from \cite{BL1, BL2}, we don't require that $g$ is simply continuous, but a higher regularity is needed to obtain a classical solution of (\ref{P}); see \cite{CS1}.\\
Under the assumptions $(g1)$-$(g3)$, problem \eqref{P} has a variational nature, so its weak solutions can be found as critical points of the energy functional $I(u)$. 
%defined as follows
%$$
%I(u)= \frac{1}{2} \iint_{\R^{2N}} \frac{|u(x)-u(y)|^{2}}{|x-y|^{N+2s}} \, dxdy - \int_{\R^{N}} G(u) \, dx.  
%$$ 
We recall that for a weak solution of problem (\ref{P}), we mean a function $u\in H^{s}(\R^{N})$ such that 
$$
\iint_{\R^{2N}} \frac{(u(x)-u(y))}{|x-y|^{N+2s}} (\varphi(x)-\varphi(y)) \, dx dy=\int_{\R^{N}} g(u) \varphi \, dx
$$
for any $\varphi \in H^{s}(\R^{N})$.
%We recall that $u$ is a least energy solution to \eqref{P} if 
%$$
%I(u)=m \mbox{ and } m=\inf\left\{I(u); u\in H^{s}(\R^{N})\setminus\{0\} \mbox{ and } I'(u)=0\right\}.
%$$
\noindent

Our first main result is the following:
\begin{thm}\label{thmf}
Let $s\in (0,1)$ and $N\geq 2$ and $g\in \mathcal{C}^{1, \alpha}(\R, \R)$, with $\alpha>\max\{0, 1-2s\}$, be an odd function satisfying $(g1)$-$(g3)$. Then (\ref{P}) possesses a positive least energy  solution and infinitely many (possibly sign-changing) radially symmetric solutions $(u_{n})_{n \in \N}$ such that $I(u_{n})\rightarrow \infty$ as  $n \rightarrow \infty$.
%$$
%\frac{1}{2}\iint_{\R^{2N}} \frac{|u_{n}(x)-u_{n}(y)|^{2}}{|x-y|^{N+2s}} \, dx dy-\int_{\R^{N}} G(u_{n}) \, dx \rightarrow \infty \mbox{ as } n \rightarrow \infty.
%$$
Moreover, these solutions are characterized by a mountain pass and symmetric mountain-pass arguments respectively. 
%If in addition we assume that $g\in \mathcal{C}^{1, \alpha}(\R, \R)$, with $\alpha>\max\{0, 1-2s\}$, then the above solutions are $\mathcal{C}^{2, \beta}(\R^{N})$, for some $\beta\in (0,1)$.
\end{thm}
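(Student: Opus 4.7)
The overall plan is to adapt the scale-augmented mountain pass approach of Hirata-Ikoma-Tanaka to the fractional setting, working throughout in the space of radial functions $H^{s}_{rad}(\R^{N})$ to restore the compact embedding into $L^{p}(\R^{N})$ for $p\in(2,2^{*}_{s})$. Both the positive least energy solution and the infinitely many symmetric critical points will be produced as critical points of the augmented functional
\[
\widetilde{I}(u,\theta) = \frac{e^{(N-2s)\theta}}{2}\iint_{\R^{2N}} \frac{|u(x)-u(y)|^{2}}{|x-y|^{N+2s}}\, dx\, dy - e^{N\theta}\int_{\R^{N}} G(u)\, dx
\]
defined on $H^{s}_{rad}(\R^{N})\times\R$, which coincides with $I(u(\cdot/e^{\theta}))$. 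The vanishing of the $\theta$-derivative encodes exactly the Pohozaev identity associated with \eqref{P}, and this will play the role that the Ambrosetti-Rabinowitz condition usually plays in controlling Palais-Smale sequences.

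Before starting I would modify $g$ outside a neighborhood of $[0,\xi_{0}]$ so that $g(t)=0$ for $t\leq 0$ (to prepare for positivity via the maximum principle) and $|g(t)|\leq C(1+|t|^{2^{*}_{s}-1})$ globally; by $(g2)$ this does not affect critical points of bounded energy. Condition $(g1)$ then ensures that $\widetilde{I}(u,\theta)\geq c_{0}>0$ on a small sphere of $H^{s}_{rad}(\R^{N})\times\R$, while $(g3)$ together with a scaled bump construction provides a point where $\widetilde{I}$ is strictly negative, giving the mountain pass geometry. The usual deformation argument produces a Palais-Smale sequence $(u_{n},\theta_{n})$ at the min-max level $c$; combining $\partial_{\theta}\widetilde{I}(u_{n},\theta_{n})\to 0$ with $\widetilde{I}(u_{n},\theta_{n})\to c$ yields the asymptotic relation
\[
\frac{N-2s}{2}\,e^{(N-2s)\theta_{n}}\,[u_{n}]_{s}^{2} - N e^{N\theta_{n}}\int_{\R^{N}} G(u_{n})\,dx \longrightarrow 0,
\]
and an elementary algebraic combination of these two facts gives uniform bounds on $\theta_{n}$ and on the Gagliardo seminorm of $u_{n}$.

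The hard part will be extracting a convergent subsequence from $(u_{n})$: with $\theta_{n}\to\theta_{*}$ and $u_{n}\rightharpoonup u$ radially, the compact embedding into subcritical $L^{p}$ and the truncated subcritical growth of $g$ permit passing to the limit in the Euler-Lagrange equation, and the asymptotic Pohozaev identity then forces strong convergence of the seminorm, so that $u$ is a nontrivial solution. Fractional elliptic regularity applied with the hypothesis $g\in\mathcal{C}^{1,\alpha}(\R,\R)$, $\alpha>\max\{0,1-2s\}$, upgrades $u$ to a classical solution, and the sign modification of $g$ together with the strong maximum principle for $(-\Delta)^{s}$ yields $u>0$. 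The mountain pass characterization of the least energy level follows as in the local case: any nontrivial critical point $v$ generates a path $\theta\mapsto(v(\cdot/e^{\theta}),\theta)$ along which $\widetilde{I}$ attains its maximum at $\theta=0$ with value $I(v)$, so $c\leq I(v)$, while the reverse inequality is automatic since $c$ is attained.

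For the multiplicity statement, the functional $\widetilde{I}$ is even (since $g$ is odd and $G$ is even), and the plan is to apply the genus-based symmetric mountain pass theorem used in \cite{HIT}, defining
\[
c_{n} = \inf_{A\in\Gamma_{n}} \sup_{(u,\theta)\in A} \widetilde{I}(u,\theta),
\]
where $\Gamma_{n}$ consists of symmetric subsets of $H^{s}_{rad}(\R^{N})\times\R$ of Krasnoselskii genus at least $n$, suitably deformed into $\{\widetilde{I}\leq 0\}$. Finite-dimensional families of compactly supported radial functions, scaled using $(g3)$, provide admissible sets of arbitrary genus, so every $c_{n}$ is finite. The main obstacle here is again the Palais-Smale condition, but the Pohozaev argument from the second paragraph applies uniformly at each level $c_{n}$; genus lower bounds combined with the subcritical growth of $g$ then force $c_{n}\to\infty$, producing the required sequence $(u_{n})$ of radial critical points of $I$ with energies tending to infinity.
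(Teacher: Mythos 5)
Your outline for the least energy solution is essentially the paper's: modify $g$, use the scale-augmented functional $\widetilde{I}(\theta,u)$ on $\R\times H^{s}_{rad}(\R^{N})$, extract a Palais-Smale sequence with an additional $\theta$-derivative constraint (the Pohozaev identity), obtain compactness through the combination of the energy level and the Pohozaev relation, and then apply regularity and the strong maximum principle. The mountain-pass characterization of the least energy level via the family of dilations $t\mapsto v(\cdot/t)$ is also what the paper does.

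The multiplicity part, however, contains a genuine gap. You propose to define $c_{n}$ directly by a Krasnoselskii-genus minimax on $\widetilde{I}$ and then assert that ``genus lower bounds combined with the subcritical growth of $g$ force $c_{n}\to\infty$.'' The mechanism you invoke does not close: the classical Rabinowitz argument for divergence of genus-based minimax levels requires the Palais--Smale condition at (and below) each level, because the key step is the estimate $\mathrm{genus}(K_{c})\geq j+1$ when $c=c_{n}=\cdots=c_{n+j}$, obtained via a deformation lemma that needs PS. But $\widetilde{I}$ (equivalently, $I$) does \emph{not} satisfy PS under the Berestycki--Lions hypotheses: the scale-augmentation trick only yields compactness of the particular Palais--Smale sequences produced by the Ekeland-type lemma, those along which $\partial_{\theta}\widetilde{I}\to 0$; it does not give PS for arbitrary sequences, which is what the Rabinowitz proposition consumes. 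This is precisely the point where the paper (following Hirata--Ikoma--Tanaka) introduces the penalty function $h(t)=t^{p}\sup_{0<\tau\leq t}f(\tau)/\tau^{p}$ with $f(t)=\max\{0,\tfrac{1}{2}mt+g(t)\}$ and the comparison functional
\[
J(u)=\frac{1}{2}\|u\|^{2}-2\int_{\R^{N}}H(u)\,dx,
\]
chosen so that (i) $J\leq I$ and (ii) $h$ satisfies an Ambrosetti--Rabinowitz inequality, hence $J$ does satisfy PS. It is the PS property of $J$ that makes the Rabinowitz argument work for $J$ and gives $c_{n}^{J}\to\infty$, and then the pointwise inequality $J\leq I$ transfers divergence to the $b_{n}$ for $I$. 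Nothing in your sketch replaces this step, and it is not a cosmetic omission: without it, you have no proof that the minimax levels of $I$ are unbounded, and hence no way to conclude that there are infinitely many \emph{distinct} critical points.

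A smaller inconsistency: you propose to redefine $g$ so that $g(t)=0$ for $t\leq 0$ in order to obtain positivity, but later you rely on $g$ being odd so that $\widetilde{I}$ is even, which is indispensable for the genus/symmetric mountain pass machinery. You cannot do both with one modification; the paper keeps $g$ odd throughout (extending oddly and truncating above $t_{0}$), establishes the multiplicity for the odd $g$, and obtains positivity of the single mountain-pass solution separately by taking a path with nonnegative components and invoking the Harnack inequality, rather than by sign-truncating $g$.
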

%For a weak solution of problem (\ref{P}) we mean a function $u\in H^{s}(\R^{N})$ such that 
%$$
%\iint_{\R^{2N}} \frac{(u(x)-u(y))}{|x-y|^{N+2s}} (\varphi(x)-\varphi(y)) \, dx dy=\int_{\R^{N}} g(u) \varphi \, dx
%$$
%for any $\varphi \in H^{s}(\R^{N})$.
%We recall that $u$ is a least energy solution to \eqref{P} if 
%$$
%I(u)=m \mbox{ and } m=\inf\left\{I(u); u\in H^{s}(\R^{N})\setminus\{0\} \mbox{ and } I'(u)=0\right\}.
%$$
%\noindent
It is worth noting that, a common approach to deal with fractional nonlocal problems, is to use the Caffarelli-Silvestre extension \cite{CS}, which consists to realize a given nonlocal problem through a  degenerate local problem in one more dimension by a Dirichlet to Neumann map.  
Anyway, in this work, we prefer to investigate the problem directly in $H^{s}(\R^{N})$ in order to adapt some useful methods and arguments developed in \cite{Byeon, HIT, JT2}. \\
We would like to observe that our result is in clear accordance with that for the classical local counterpart. For this reason, Theorem \ref{thmf} can be seen as the fractional version of the existence and multiplicity results given in \cite{BL1, BL2, HIT}. 

\noindent
In the second part of this paper, we use the mountain pass approach in \cite{HIT, JT2}, to prove the existence of a positive solution to (\ref{P}) in the null mass case, that is when $g$ satisfies the following properties
%By using the mountain pass approach in \cite{HIT, JT2}, we prove the existence of a positive solution to (\ref{P}) in the null mass case, that is when $g$ satisfies the following properties
\begin{compactenum}[($h_1$)]
\item $\displaystyle{\limsup_{t\rightarrow 0^{+}} \frac{g(t)}{t^{2^{*}_{s}-1}}\leq 0}$ where $2^{*}_{s}=\frac{2N}{N-2s}$;
\item $\displaystyle{\lim_{t\rightarrow +\infty} \frac{g(t)}{t^{2^{*}_{s}-1}} = 0}$;
\item There exists  $\xi_{0}>0$ such that  $\displaystyle{G(\xi_{0})= \int_{0}^{\xi_{0}} g(\tau) d\tau >0}$. 
\end{compactenum}  
We point out that the main difficulty related to the zero mass case is due to the fact that the energy of solutions of (\ref{P}) can be infinite. 
%In particular there are not so many compactness results as in the more well known positive mass case.
In the local setting, that is $s=1$, several results for zero mass problems have been established in \cite{ASM2, AFM, AP1, BGM2, BM, BL1, Struwe}.  We also recall that from a physical point of view, this type of problem is related to Yang-Mills equations, see for instance \cite{G, GNN}. 
However, differently from the classic literature, as far as we know, there is only one work \cite{A2}  concerning with zero-mass problems in non-local setting.
%Let us observe that nowadays there many papers c non-local fractional equations with superlinear nonlinearities with subcritical growth (see \cite{MRS} and references therein), but only very recently zero-mass problems have been studied also in non-local setting \cite{A2, PSY}. 
Motivated by it, here we would like to go further in this direction, by studying (\ref{P}) when $g$ is a general nonlinearity such that $g'(0)=0$.

\noindent
Our main second result can be stated as follows:
\begin{thm}\label{thmf2}
Let $s\in (0,1)$ and $N\geq 2$ and $g\in \mathcal{C}^{1, \alpha}(\R, \R)$ be an odd function satisfying $(h_1)$-$(h_3)$. Then (\ref{P}) possesses a positive radially decreasing solution.
% $u\in \mathcal{D}_{rad}^{s, 2}(\R^{N})$.
%Then (\ref{P}) possesses a nontrivial weak solution $u\in \mathcal{D}_{rad}^{s, 2}(\R^{N})$.
\end{thm}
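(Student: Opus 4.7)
The plan is to reduce the zero mass problem to the positive mass case treated in Theorem \ref{thmf} via a penalization and approximation argument. For $\varepsilon\in(0,\varepsilon_0)$ introduce the odd nonlinearity
\[
g_\varepsilon(t):=g(t)-\varepsilon t.
\]
By $(h_1)$, since $2^{*}_{s}-1>1$ forces $g(t)/t\to 0$ as $t\to 0^+$, one has $g_\varepsilon(t)/t\to -\varepsilon$, hence $(g1)$ holds with mass $m=\varepsilon$; $(h_2)$ immediately yields $(g2)$; and $G_\varepsilon(\xi_0)=G(\xi_0)-\tfrac{\varepsilon}{2}\xi_0^2>0$ for $\varepsilon$ small, yielding $(g3)$. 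Since the $\mathcal{C}^{1,\alpha}$ regularity is preserved, Theorem \ref{thmf} produces a positive, radially decreasing mountain-pass solution $u_\varepsilon\in H^s(\R^N)$ of $(-\Delta)^s u_\varepsilon=g_\varepsilon(u_\varepsilon)$ at level $c_\varepsilon=I_\varepsilon(u_\varepsilon)$.

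The next step is to derive uniform estimates. Combining the fractional Pohozaev identity $\tfrac{N-2s}{2}[u_\varepsilon]_s^2=N\int G_\varepsilon(u_\varepsilon)\,dx$, where $[u]_s^2:=\iint_{\R^{2N}}\tfrac{|u(x)-u(y)|^2}{|x-y|^{N+2s}}\,dx\,dy$, with the definition of $I_\varepsilon$ gives the clean relation $c_\varepsilon=\tfrac{s}{N}[u_\varepsilon]_s^2$. A uniform upper bound $c_\varepsilon\le C$ is obtained by evaluating $I_\varepsilon$ along the rescaled path $v_t(x):=v(x/t)$ built from a fixed radial $v\in C_c^\infty(\R^N)$ with $\int G(v)\,dx>0$ (which exists by $(h_3)$), noting that $\int G_\varepsilon(v)\to\int G(v)$ as $\varepsilon\to 0^+$. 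A matching uniform lower bound $c_\varepsilon\ge c_0>0$ follows from the scaled mountain-pass characterization $c_\varepsilon=\inf_{w}\max_{t>0}I_\varepsilon(w_t)$ combined with the pointwise growth $|G_\varepsilon(t)|\le C|t|^{2^{*}_{s}}+\tfrac{\varepsilon}{2}t^2$ coming from $(h_1)$-$(h_2)$ and the fractional Sobolev inequality $\|w\|_{L^{2^{*}_{s}}}\le S^{-1}[w]_s$; the critical scaling forces the homogeneous seminorm to drop out of the ratio, giving $c_0$ independent of $\varepsilon$. Hence $[u_\varepsilon]_s$ is uniformly controlled both from above and away from zero.

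Up to a subsequence, $u_\varepsilon\rightharpoonup u_0$ in the homogeneous space $\mathcal{D}^{s,2}(\R^N)$ and a.e.\ in $\R^N$, with $u_0\ge 0$ radially decreasing (as a pointwise limit of such functions). The Strauss-type pointwise decay for radially decreasing functions with controlled $L^{2^{*}_{s}}$-norm yields $u_\varepsilon(x)\le C|x|^{-(N-2s)/2}$ uniformly in $\varepsilon$, from which $u_\varepsilon\to u_0$ strongly in $L^p_{\mathrm{loc}}(\R^N)$ for every $p\in[1,2^{*}_{s})$. Using the bound $|g_\varepsilon(t)|\le C|t|^{2^{*}_{s}-1}+\varepsilon|t|$ and dominated convergence (the radial tail providing an $L^1$ dominant outside a large ball), one passes to the limit in the weak formulation tested against $\varphi\in C_c^\infty(\R^N)$, obtaining $(-\Delta)^s u_0=g(u_0)$ in the distributional sense. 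Regularity from \cite{CS1} and the fractional strong maximum principle then upgrade $u_0$ to a positive classical solution, provided it is nontrivial.

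The hard part is excluding $u_0\equiv 0$. If this were the case, the $L^p_{\mathrm{loc}}$-convergence together with the splitting afforded by $(h_1)$-$(h_2)$---$g(t)=o(t^{2^{*}_{s}-1})$ both near $0$ and near $\infty$, continuous and bounded on each intermediate interval $[\delta,T]$---would yield $\int g(u_\varepsilon)u_\varepsilon\,dx\to 0$. Indeed, the critical tails are absorbed by the uniform $L^{2^{*}_{s}}$ bound after fixing a sufficiently small parameter $\eta>0$, while the intermediate contribution over $\{\delta\le u_\varepsilon\le T\}$ vanishes because radial monotonicity and the uniform $L^{2^{*}_{s}}$ bound force $\{u_\varepsilon\ge\delta\}$ to lie in a ball of uniformly bounded radius on which $u_\varepsilon\to 0$ strongly. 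Testing the equation with $u_\varepsilon$ would then produce $[u_\varepsilon]_s^2+\varepsilon\|u_\varepsilon\|_{L^2}^2=\int g(u_\varepsilon)u_\varepsilon\,dx\to 0$, contradicting $[u_\varepsilon]_s^2=Nc_\varepsilon/s\ge Nc_0/s>0$. The radial framework is therefore essential not only for compactness but precisely for this non-concentration step, which is where the analysis departs most sharply from the positive mass case.
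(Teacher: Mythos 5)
Your proposal is correct and follows essentially the same route as the paper: approximate by the positive-mass nonlinearity $g_{\e}(t)=g(t)-\e t$, invoke the positive-mass existence theorem (the paper uses Theorem~\ref{thmone}) to get positive radially decreasing $u_{\e}$, derive $\e$-uniform two-sided bounds on $[u_{\e}]_{H^{s}(\R^{N})}$, extract a weak limit $u_{0}$ in $\mathcal{D}^{s,2}_{rad}(\R^{N})$, pass to the limit in the weak formulation via the Strauss/radial-decay machinery, and rule out $u_{0}\equiv 0$ using the uniform lower seminorm bound. The only small deviations are cosmetic: for the uniform lower bound the paper uses the Pohozaev identity plus the Sobolev inequality on $u_{\e}$ itself (rather than the mountain-pass-geometry estimate you sketch, which is also valid but slightly less direct), and for nontriviality the paper shows $\int_{\R^{N}}G^{+}(u_{\e})\,dx\to 0$ and concludes via Pohozaev and $G^{-}\leq 0$, whereas you show $\int_{\R^{N}}g(u_{\e})u_{\e}\,dx\to 0$ and conclude by testing with $u_{\e}$; the paper's variant is marginally cleaner because $G^{+}$ has a fixed sign, but both work and rest on the same radial-decay non-concentration mechanism.
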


\noindent

The proof of the above Theorem \ref{thmf2}, is obtained by combining the mountain pass approach and an approximation argument. 
%Because of the presence of nonlocal operator $(-\Delta)^{s}$, we can not use the arguments in \cite{BL1}, so  new ideas and techniques need to be introduced to overcome this difficulty.
Indeed, we show that a solution of (\ref{P}) can be approximated by a sequence of positive radially symmetric  solutions $(u_{\e})$ in $H^{s}(\R^{N})$, each one solves an approximate ``positive mass" problem $(-\Delta)^{s}u=g(u)-\e u$ in $\R^{N}$. Taking into account the mountain-pass characterization of least energy solution of (\ref{P}) given in Theorem \ref{thmf}, we are capable to obtain lower and upper bounds for the mountain pass critical levels $b^{\e}_{mp}$, which can be estimated independently from $\e$, when $\e$ is sufficiently small. This allows us to pass to the limit in $(-\Delta)^{s}u_{\e}=g(u_{\e})-\e u_{\e}$ in $\R^{N}$ as $\e \rightarrow 0$, and to find a nontrivial solution $u$ to (\ref{P}). It is worth to point out that our proof is different from the ones given in \cite{BL1}, and that it works even when $s=1$.
\smallskip

\noindent
To our knowledge all results presented here are new.
The paper is organized as follows. Section $2$ contains some preliminary results about fractional Sobolev spaces and functional setting. In Section $3$ we prove the existence of infinitely many solutions to (\ref{P}) by using an auxiliary functional $\tilde{I}(\theta, u)$ on the augmented space $\R \times H^{s}_{rad}(\R^{N})$. In Section $4$ we obtain the existence of a least energy solution to (\ref{P}) characterized by a mountain-pass argument, and we investigate regularity and symmetry of this solution. Finally, by using an approximation argument, we get a positive radially symmetric positive solution to (\ref{P}) in the null mass case.

\section{Preliminaries and functional setting}

\noindent
In this section we collect some preliminary results which will be useful along the paper. 

\subsection{Fractional Sobolev spaces}
We recall some useful facts of the fractional Sobolev spaces; for more details we refer to \cite{DPV}.\\
For any $s\in (0,1)$ we denote by $\mathcal{D}^{s,2}(\R^{N})$ the completion of $\mathcal{C}^{\infty}_{0}(\R^{N})$ with respect to the so-called Gagliardo seminorm
$$
[u]^{2}_{H^{s}(\R^{N})} =\iint_{\R^{2N}} \frac{|u(x)-u(y)|^{2}}{|x-y|^{N+2s}} \, dx \, dy.
$$
That is 
$$
\mathcal{D}^{s,2}(\R^{N})=\{u\in L^{2^{*}_{s}}(\R^{N}): [u]_{H^{s}(\R^{N})}<\infty\},
$$
where 
$$
2^{*}_{s}=\frac{2N}{N-2s}
$$
is the critical Sobolev exponent.
We also use the notation
$$
\langle u, v \rangle_{\mathcal{D}^{s, 2}(\R^{N})}=\iint_{\R^{2N}} \frac{(u(x)-u(y))}{|x-y|^{N+2s}}(v(x)-v(y)) \, dx \, dy
$$
for all $u, v\in \mathcal{D}^{s, 2}(\R^{N})$.
Now, we define the standard fractional Sobolev space
$$
H^{s}(\R^{N})= \left\{u\in L^{2}(\R^{N}) : \frac{|u(x)-u(y)|}{|x-y|^{\frac{N+2s}{2}}} \in L^{2}(\R^{2N}) \right \}
$$
endowed with the natural norm 
$$
\|u\|_{H^{s}(\R^{N})} = \sqrt{[u]_{H^{s}(\R^{N})}^{2} + \|u\|_{L^{2}(\R^{N})}^{2}}
$$
%where the term
%$$
%[u]^{2}_{H^{s}(\R^{N})} =\iint_{\R^{2N}} \frac{|u(x)-u(y)|^{2}}{|x-y|^{N+2s}} \, dx \, dy
%$$
%is the so-called Gagliardo seminorm of $u$. \\
Let us denote by 
$$
\langle u, v \rangle_{H^{s}(\R^{N})} =\langle u, v \rangle_{\mathcal{D}^{s, 2}(\R^{N})}+\int_{\R^{N}} u v \, dx
$$
for any $u, v \in H^{s}(\R^{N})$.
%In order to we will write $[\cdot]$ for $[u]^{2}_{H^{s}(\R^{N})}$ and $|u|_{q}$ to denote $||u||_{L^{q}(\R^{N})}$
For the reader's convenience, we review the main embedding result for this class of fractional Sobolev spaces. 
\begin{thm}\label{Sembedding}
Let $s\in (0,1)$ and $N>2s$. Then $H^{s}(\R^{N})$ is continuously embedded in $L^{q}(\R^{N})$ for any $q\in [2, 2^{*}_{s}]$ and compactly in $L^{q}_{loc}(\R^{N})$ for any $q\in [2, 2^{*}_{s})$. 
\end{thm}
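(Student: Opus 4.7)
The plan is to establish the two assertions (continuous embedding and local compactness) in sequence, leveraging the Fourier-analytic characterization of $H^{s}(\R^{N})$.

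First I would prove the critical homogeneous embedding $\mathcal{D}^{s,2}(\R^{N})\hookrightarrow L^{2^{*}_{s}}(\R^{N})$ and deduce the sub-critical continuous embeddings by interpolation. Using Plancherel's theorem, one has $[u]_{H^{s}(\R^{N})}^{2}=c_{N,s}\int_{\R^{N}}|\xi|^{2s}|\hat u(\xi)|^{2}\,d\xi$, so $u\in\mathcal{D}^{s,2}(\R^{N})$ can be identified with a Riesz potential $u=I_{s}*f$ with $f\in L^{2}(\R^{N})$ and $\|f\|_{L^{2}}\sim [u]_{H^{s}(\R^{N})}$. The Hardy--Littlewood--Sobolev inequality then yields
\begin{equation*}
\|u\|_{L^{2^{*}_{s}}(\R^{N})}=\|I_{s}*f\|_{L^{2^{*}_{s}}(\R^{N})}\leq C\|f\|_{L^{2}(\R^{N})}\leq C\,[u]_{H^{s}(\R^{N})}.
\end{equation*}
This gives $H^{s}(\R^{N})\hookrightarrow L^{2^{*}_{s}}(\R^{N})$ continuously, and combined with the trivial inclusion $H^{s}(\R^{N})\hookrightarrow L^{2}(\R^{N})$, the logarithmic convexity of $L^{p}$ norms (interpolation inequality) yields $\|u\|_{L^{q}(\R^{N})}\leq\|u\|_{L^{2}(\R^{N})}^{\theta}\|u\|_{L^{2^{*}_{s}}(\R^{N})}^{1-\theta}$ for every $q\in[2,2^{*}_{s}]$ with the natural $\theta\in[0,1]$, proving the first assertion.

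Next I would handle the local compactness for $q\in[2,2^{*}_{s})$ via the Riesz--Fr\'echet--Kolmogorov criterion applied on an arbitrary ball $B_{R}\subset\R^{N}$. Given a bounded sequence $(u_{n})\subset H^{s}(\R^{N})$, the continuous embedding ensures that $(u_{n})$ is bounded in $L^{q}(B_{R})$, so it suffices to verify equi-continuity of translations in $L^{q}(B_{R})$. A direct Fourier computation shows
\begin{equation*}
\int_{\R^{N}}|u(x+h)-u(x)|^{2}\,dx\leq C\,|h|^{2s}\,[u]_{H^{s}(\R^{N})}^{2}\qquad(h\in\R^{N}),
\end{equation*}
which gives a uniform $L^{2}$-modulus of continuity. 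Interpolating this $L^{2}$-control with the uniform $L^{2^{*}_{s}}$ bound from the first step produces a uniform $L^{q}$-modulus of continuity for every $q<2^{*}_{s}$; together with the uniform $L^{q}$-boundedness this verifies the Fr\'echet--Kolmogorov hypotheses on $B_{R}$, yielding a strongly convergent subsequence in $L^{q}(B_{R})$. A standard diagonal argument across an exhaustion of $\R^{N}$ by balls finishes the proof.

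The main technical obstacle is the critical embedding step: the jump from the Gagliardo seminorm to the $L^{2^{*}_{s}}$ norm is sharp and genuinely depends on a non-trivial harmonic-analytic tool (Hardy--Littlewood--Sobolev). The subcritical compactness, while classical, also requires care that the loss of compactness at $q=2^{*}_{s}$ (e.g., concentration/vanishing phenomena) be avoided, which is exactly why the Fr\'echet--Kolmogorov argument works for $q<2^{*}_{s}$ but fails at the critical exponent.
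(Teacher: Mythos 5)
The paper does not actually prove Theorem \ref{Sembedding}: it is stated as a recalled fact and the reader is referred to \cite{DPV}, so there is no in-paper proof to match your argument against. Your proposal, however, is a correct proof and follows a route genuinely different from the one in the cited reference. You exploit the Hilbert-space structure at $p=2$: Plancherel gives $[u]_{H^{s}(\R^{N})}^{2}\asymp\int|\xi|^{2s}|\hat u(\xi)|^{2}\,d\xi$, writing $u=I_{s}*f$ with $f=(-\Delta)^{s/2}u\in L^{2}$ and invoking Hardy--Littlewood--Sobolev yields the critical embedding $\mathcal{D}^{s,2}(\R^{N})\hookrightarrow L^{2^{*}_{s}}(\R^{N})$, interpolation with the trivial $L^{2}$ bound handles $q\in[2,2^{*}_{s}]$, and for local compactness the Fourier translation estimate $\|u(\cdot+h)-u\|_{L^{2}}^{2}\le C|h|^{2s}[u]_{H^{s}}^{2}$ (which uses $|e^{ih\cdot\xi}-1|^{2}\le C(|h||\xi|)^{2s}$, valid precisely because $s\in(0,1)$) combined with interpolation gives the uniform $L^{q}$-modulus for $q<2^{*}_{s}$ needed for Riesz--Fr\'echet--Kolmogorov. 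The proof in \cite{DPV} proceeds instead by real-variable means (a direct Gagliardo--Nirenberg type argument for the Sobolev inequality and a covering/translation argument for compactness) and works for the full scale $W^{s,p}$, $p\ne2$; your argument is shorter and more transparent but is tied to $p=2$. One small detail to tidy up: the Kolmogorov criterion on the bounded ball $B_{R}$ is usually formulated for functions extended by zero outside $B_{R}$, so one should either apply it to $\chi u_{n}$ for a fixed cutoff $\chi\in\mathcal{C}^{\infty}_{0}$ with $\chi\equiv1$ on $B_{R}$ (the commutator terms are harmless since your translation estimate holds globally on $\R^{N}$), or use the version of the criterion in which translates are measured on $\R^{N}$ for functions defined on all of $\R^{N}$ and one then restricts; this is routine and does not affect the correctness of the argument.
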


\noindent
Now we introduce 
$$
H^{s}_{rad}(\R^{N})=\{u\in H^{s}(\R^{N}): u(x)=u(|x|)\}
$$
the space of radial functions in $H^{s}(\R^{N})$.
We recall the following compactness result due to Lions \cite{Lions}:
\begin{thm}\cite{Lions}\label{Lions}
Let $s\in (0,1)$ and $N\geq 2$. Then $H^{s}_{rad}(\R^{N})$ is compactly embedded in $L^{q}(\R^{N})$ for any $q\in (2, 2^{*}_{s})$.
\end{thm}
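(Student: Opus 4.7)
The plan is to follow Lions' classical concentration-compactness strategy, reducing compactness to local compactness plus a uniform tail estimate. Let $(u_n)$ be a bounded sequence in $H^{s}_{rad}(\R^{N})$. By reflexivity of $H^{s}(\R^{N})$, after extracting a subsequence one has $u_n \rightharpoonup u$ weakly in $H^{s}(\R^{N})$; since radial symmetry is preserved under weak limits, $u$ is itself radial. Setting $v_n := u_n - u$, it suffices to show that a bounded sequence $(v_n) \subset H^{s}_{rad}(\R^{N})$ with $v_n \rightharpoonup 0$ converges strongly to $0$ in $L^{q}(\R^{N})$ for every fixed $q \in (2, 2^{*}_{s})$.

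Theorem~\ref{Sembedding} already furnishes the local compactness: for every fixed $R > 0$, $v_n \to 0$ in $L^{q}(B_R)$. The essential remaining step is therefore the uniform tail bound
$$
\lim_{R \to \infty}\, \sup_{n} \int_{|x| > R} |v_n|^{q} \, dx = 0.
$$
The key input is a fractional Strauss-type radial decay lemma. Using the radial invariance of $v$, one can pack $\sim |y|^{N-1}$ pairwise disjoint unit balls centered on the sphere of radius $|y|$; since they all carry the same $L^{2}$ mass, one obtains for $|y| \geq 2$ the averaged decay
$$
\|v\|_{L^{2}(B_1(y))}^{2} \leq \frac{C}{|y|^{N-1}}\, \|v\|_{L^{2}(\R^{N})}^{2}.
$$
Interpolating this bound with the local fractional Sobolev estimate $\|v\|_{L^{2^{*}_{s}}(B_1(y))} \leq C \|v\|_{H^{s}(\R^{N})}$ (which follows from Theorem~\ref{Sembedding} with a constant uniform in $y$) yields, for any $q \in (2, 2^{*}_{s})$, an estimate of the form
$$
\|v\|_{L^{q}(B_1(y))} \leq C\, |y|^{-\gamma}\, \|v\|_{H^{s}(\R^{N})}
$$
with some strictly positive exponent $\gamma = \gamma(N, s, q)$.

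Given this local decay, I would cover $\{|x| > R\}$ by a bounded-overlap family of unit balls $\{B_{1}(y_j)\}$ with $|y_j| \geq R - 1$, raise the previous inequality to the $q$-th power, sum, and replace the discrete sum by an integral in polar coordinates to obtain
$$
\int_{|x| > R} |v_n|^{q}\, dx \leq C\, \|v_n\|_{H^{s}(\R^{N})}^{q} \int_{R}^{\infty} r^{N-1-q\gamma}\, dr,
$$
which vanishes as $R \to \infty$ provided $q\gamma > N$. Combining with the local compactness yields $v_n \to 0$ in $L^{q}(\R^{N})$ and hence the claimed compact embedding.

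The principal obstacle is verifying that the exponent $\gamma$ produced by the interpolation is large enough so that $q\gamma > N$ for the entire range $q \in (2, 2^{*}_{s})$. A direct calculation with $\theta \in (0, 1)$ defined by $\tfrac{1}{q} = \tfrac{\theta}{2} + \tfrac{1-\theta}{2^{*}_{s}}$ shows that the naive two-endpoint interpolation between $L^{2}$ and $L^{2^{*}_{s}}$ can degenerate as $q \to 2^{*}_{s}$ in low dimensions, and in that regime the argument must be refined, either by packing balls on several concentric spheres of comparable radii, or by inserting the Gagliardo seminorm on annuli directly into the interpolation to improve the local $L^{2^{*}_{s}}$ control. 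This endpoint analysis is the only delicate ingredient; all other steps are routine reductions going back to Strauss and Lions, and in fact the failure of compactness at the endpoints $q = 2$ (by scaling $v_n(x) = n^{-N/2}v(x/n)$) and $q = 2^{*}_{s}$ (by the critical rescaling $v_n(x) = n^{(N-2s)/2}v(nx)$) shows the range in the statement is sharp.
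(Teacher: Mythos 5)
The paper does not prove this theorem; it cites it to Lions (J.\ Funct.\ Anal.\ 49, 1982), so there is no in-paper argument to compare against and your proposal must stand on its own.

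Your overall strategy — reduce to local compactness plus a uniform tail estimate, and extract the tail estimate from a radial decay lemma — is the right one and is essentially the Strauss--Lions route. However, as you yourself half-suspect, the quantitative step does not close, and the failure is worse than you indicate. With $\gamma = \theta(N-1)/2$ and $\frac{1}{q}=\frac{\theta}{2}+\frac{1-\theta}{2^{*}_{s}}$, one computes $q\theta = \frac{N}{s}-\frac{q(N-2s)}{2s}$, which decreases from the value $2$ at $q=2$ to $0$ at $q=2^{*}_{s}$. Hence $q\gamma = q\theta\cdot\frac{N-1}{2}\le N-1 < N$ for \emph{every} $q\in(2,2^{*}_{s})$ and every $N\ge 2$, so the integral $\int_{R}^{\infty} r^{N-1-q\gamma}\,dr$ diverges throughout the range, not only near the endpoint $q\to 2^{*}_{s}$ or in low dimensions. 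The heuristic reason is that covering the annulus at radius $r$ costs a factor $r^{N-1}$ (the number of unit balls), while the radial $L^2$ decay on a single ball only repays $r^{-(N-1)}$, so the two cancel exactly and nothing is gained; the crude bound $\|v\|_{L^{2^{*}_{s}}(B_1(y))}\le C\|v\|_{H^{s}(\R^{N})}$ throws away all locality and contributes no further decay.

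The missing ingredient is precisely the one you gesture at but do not execute: one must replace the global $H^s$ bound on each ball by a \emph{local} one, $\|v\|_{L^{2^{*}_{s}}(B_1(y_j))}\le C\|v\|_{H^{s}(B_2(y_j))}$ with $C$ independent of $y_j$, and then redistribute the decay so that after a Hölder argument across the cover the summable quantities $\sum_j\|v\|_{L^2(B_1(y_j))}^2\lesssim\|v\|_{L^2}^2$ and $\sum_j\|v\|_{H^{s}(B_2(y_j))}^2\lesssim\|v\|_{H^s}^2$ (bounded overlap) both appear; this yields a tail bound of the form $\int_{|x|>R}|v|^q\,dx\le C R^{-(N-1)\kappa}\|v\|_{H^s}^{q}$ for some $\kappa=\kappa(N,s,q)>0$, which vanishes as $R\to\infty$ uniformly in $n$. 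Alternatively, for $s>1/2$ one has the pointwise Strauss bound $|u(x)|\le C|x|^{-(N-1)/2}\|u\|_{H^s}$ for $|x|\ge 1$, and the tail estimate becomes a one-line consequence of $\int_{|x|>R}|v|^q\le \|v\|_{L^\infty(|x|>R)}^{q-2}\|v\|_{L^2}^2$; but for $s\le 1/2$ this pointwise bound fails and only the refined annular/ball-cover argument (or an interpolation-space argument à la Lions) works. As written, your proof is therefore incomplete: the reduction, the packing estimate, and the sharpness remarks are all correct, but the central interpolation leaves a gap that is present across the entire range $(2,2^{*}_{s})$, and the sketch of the fix would need to be carried out in detail for the proof to stand.
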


Finally, we recall the following useful lemmas:

\begin{lem}\cite{BL1}\label{radlem}
Let $u\in L^{t}(\R^{N})$, $1\leq t<\infty$ be a nonnegative radially decreasing function (that is $0\leq u(x)\leq u(y)$ if $|x|\geq |y|$). Then 
\begin{equation}\label{CO}
|u(x)|\leq \left(\frac{N}{\omega_{N-1}}\right)^{\frac{1}{t}} |x|^{-\frac{N}{t}} \|u\|_{L^{t}(\R^{N})} \mbox{ for any } x\in \R^{N}\setminus \{0\},
\end{equation}
where $\omega_{N-1}$ is the Lebesgue measure of the unit sphere in $\R^{N}$.
\end{lem}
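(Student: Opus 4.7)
The plan is to exploit the radial monotonicity pointwise: for any fixed $x\in\R^{N}\setminus\{0\}$, the condition $0\leq u(x)\leq u(y)$ whenever $|y|\leq |x|$ forces $u$ to be at least $u(x)$ on the whole ball $B_{|x|}(0)$. Integrating $u^{t}$ against this lower bound then gives a lower estimate on $\|u\|_{L^{t}(\R^{N})}^{t}$ in terms of $u(x)$ and the measure of the ball, and solving for $u(x)$ produces the claimed pointwise decay.

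More concretely, first I would write
$$
\|u\|_{L^{t}(\R^{N})}^{t}=\int_{\R^{N}}u(y)^{t}\,dy\;\geq\;\int_{B_{|x|}(0)}u(y)^{t}\,dy\;\geq\;u(x)^{t}\,|B_{|x|}(0)|,
$$
where the first inequality uses $u\geq 0$ (so that restricting the domain of integration does not increase the integral) and the second uses that $u(y)\geq u(x)$ for every $y$ with $|y|\leq |x|$, by the radially decreasing hypothesis. Next I would insert the standard formula
$$
|B_{|x|}(0)|=\int_{0}^{|x|}\omega_{N-1}\rho^{N-1}\,d\rho=\frac{\omega_{N-1}}{N}\,|x|^{N},
$$
which follows from polar coordinates together with the definition of $\omega_{N-1}$ as the Lebesgue measure of the unit sphere $S^{N-1}$.

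Combining the two displays yields
$$
u(x)^{t}\;\leq\;\frac{N}{\omega_{N-1}}\,|x|^{-N}\,\|u\|_{L^{t}(\R^{N})}^{t},
$$
and taking $t$-th roots gives exactly \eqref{CO}. There is no genuine obstacle here: the statement is essentially a one-line consequence of monotonicity plus the volume of a Euclidean ball, and the only mildly subtle point is keeping track of the normalization constant, which is fixed once one adopts the convention that $\omega_{N-1}$ denotes $|S^{N-1}|$ rather than $|B_{1}(0)|$ (as specified in the statement). Since $t\in[1,\infty)$, the exponent $1/t$ is well defined and the inequality holds for every $x\neq 0$, concluding the proof.
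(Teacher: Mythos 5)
Your proof is correct and is exactly the standard argument for the radial lemma: bound $\|u\|_{L^t}^t$ from below by integrating over the ball $B_{|x|}(0)$, use radial monotonicity to pull out $u(x)^t$, and insert the volume $|B_{|x|}(0)|=\tfrac{\omega_{N-1}}{N}|x|^N$. The paper itself states this lemma with a citation to \cite{BL1} and gives no proof, and the argument you wrote is the same one found there, so there is nothing to compare beyond noting agreement.
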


\begin{lem}\cite{ADP, BL1}\label{strauss}
Let $P$ and $Q:\R\rightarrow\R$ be a continuous functions satisfying
\begin{equation*}
\lim_{t\rightarrow +\infty} \frac{P(t)}{Q(t)}=0.
\end{equation*}
Let $(v_{k})$, $v$ and $w$ be measurable functions from $\R^{N}$ to $\R$, with $w$ bounded, such that 
\begin{align*}
&\sup_{k} \int_{\R^{N}} |Q(v_{k}(x)) w| \, dx <+\infty, \\
&P(v_{k}(x))\rightarrow v(x) \mbox{ a.e. in } \R^{N}. 
\end{align*}
Then $\|(P(v_{k})-v)w\|_{L^{1}(\mathcal{B})}\rightarrow 0$, for any bounded Borel set $\mathcal{B}$. \\
Moreover, if we have also
\begin{equation*}
\lim_{t\rightarrow 0} \frac{P(t)}{Q(t)}=0,
\end{equation*}
and
\begin{equation*}
\lim_{|x| \rightarrow \infty} \sup_{k\in \N} |v_{k}(x)|=0,
\end{equation*}
then $\|(P(v_{k})-v)w\|_{L^{1}(\R^{N})}\rightarrow 0$.
\end{lem}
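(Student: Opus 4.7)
The plan is to establish the first conclusion via Vitali's convergence theorem on the finite-measure set $\mathcal{B}$, and then to derive the global conclusion by splitting $\R^{N}$ into a large ball and its complement.

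First I would exploit the hypothesis $\lim_{t\to +\infty} P(t)/Q(t) = 0$ together with continuity of $P$ to obtain, for every $\eta > 0$, a pointwise bound of the form
\begin{equation*}
|P(v_{k}(x))| \leq C_{\eta} + \eta \, |Q(v_{k}(x))| \qquad \text{for a.e. } x\in \R^{N},
\end{equation*}
with $C_{\eta}$ depending on $\eta$ but not on $k$. Multiplying by $|w|$ and using the uniform bound on $\int |Q(v_{k}) w|\,dx$, this shows that the family $\{P(v_{k})w\}_{k}$ is uniformly integrable on any bounded Borel set $\mathcal{B}$: indeed, for any measurable $E\subset\mathcal{B}$,
\begin{equation*}
\int_{E} |P(v_{k}) w|\,dx \leq C_{\eta}\|w\|_{\infty}|E| + \eta \sup_{k}\int_{\R^{N}}|Q(v_{k})w|\,dx,
\end{equation*}
which is made arbitrarily small by first choosing $\eta$ small and then $|E|$ small. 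Combined with the a.e.\ convergence $P(v_{k})w \to vw$ and $|\mathcal{B}|<\infty$, Vitali's convergence theorem yields the first conclusion; Fatou's lemma applied to the same inequality also gives $vw \in L^{1}(\mathcal{B})$.

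For the second conclusion, the additional hypotheses are exactly what make the previous argument work globally. Fixing $\eta>0$, I would use $\lim_{t\to 0}P(t)/Q(t) = 0$ and $\lim_{t\to +\infty}P(t)/Q(t) = 0$ to select $\delta>0$ and $M>0$ such that $|P(t)| \leq \eta |Q(t)|$ whenever $|t|\leq \delta$ or $|t|\geq M$. The uniform decay $\lim_{|x|\to\infty}\sup_{k} |v_{k}(x)|=0$ then produces a radius $R>0$ beyond which $|v_{k}(x)| \leq \delta$ uniformly in $k$, so on $\{|x|\geq R\}$ we have $|P(v_{k})w|\leq \eta |Q(v_{k})w|$, and Fatou delivers the same estimate for $|vw|$. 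The tail integral is therefore controlled by $2\eta \sup_{k}\int_{\R^{N}} |Q(v_{k})w|\,dx$, while the integral over the bounded ball $\{|x|\leq R\}$ tends to zero by the first part. Sending $k\to\infty$ and then $\eta\to 0$ concludes the argument.

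The main obstacle I anticipate is precisely the uniform integrability step: the weight $w$ is merely bounded (not integrable), so dominated convergence is unavailable, and the growth of $P(v_{k})$ is not controlled pointwise. The decomposition ``\emph{$P$ is bounded for small $|t|$ and dominated by $\eta Q$ for large $|t|$}'' is the key device that converts the $L^{1}$ information on $Q(v_{k})w$ into equi-integrability of $P(v_{k})w$; once this is in place, both conclusions become routine applications of Vitali and Fatou.
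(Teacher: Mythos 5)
Your argument is correct and follows the classical proof of this ``Strauss''-type compactness lemma. The paper itself does not prove Lemma~\ref{strauss}, citing it from \cite{ADP,BL1}; the proof in those sources is exactly what you describe: the bound $|P(t)|\le C_\eta+\eta|Q(t)|$ turns the uniform control on $\int|Q(v_k)w|\,dx$ into uniform integrability of $P(v_k)w$ on finite-measure sets, Vitali (with Fatou supplying $vw\in L^1$) gives the local statement, and the global one follows by cutting off at a large radius where the uniform decay of $v_k$ together with the vanishing of $P/Q$ near the origin absorb the tail, the ball being handled by the first part.

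One point you should make explicit: the hypotheses as written only give $P(t)/Q(t)\to 0$ as $t\to+\infty$ and as $t\to 0$, yet your pointwise inequality $|P(t)|\le C_\eta+\eta|Q(t)|$ for all $t$, and your choice of $\delta,M$ with the conditions $|t|\le\delta$ or $|t|\ge M$, tacitly use control as $t\to-\infty$ as well; when $v_k$ takes large negative values this does not follow from the one-sided limit. This two-sided reading is indeed the intended one --- in every application in the paper $P$ is odd and $Q(t)=|t|^{2^{*}_{s}-1}$ is even, so the $t\to\pm\infty$ behaviour coincides --- but it is worth stating rather than silently upgrading $t\to+\infty$ to $|t|\to\infty$.
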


\begin{lem}\cite{CW}\label{Strauss}
Let  $(X, \|\cdot\|)$ be a Banach space such that $X$ is embedded continuously and compactly into $L^{q}(\R^{N})$ for $q\in [q_{1}, q_{2}]$ and $q\in (q_{1}, q_{2})$ respectively, where $q_{1}, q_{2}\in (0, \infty)$.
Assume that $(u_{k})\subset X$, $u: \R^{N} \rightarrow \R$ is a measurable function and $P\in C(\R, \R)$ is such that
\begin{compactenum}[$(i)$]
\item $\displaystyle{\lim_{|t|\rightarrow 0} \frac{P(t)}{|t|^{q_{1}}}=0}$, \\
\item $\displaystyle{\lim_{|t|\rightarrow \infty} \frac{P(t)}{|t|^{q_{2}}}=0}$,\\
\item $\displaystyle{\sup_{k\in \N} \|u_{k}\|<\infty}$,\\
\item $\displaystyle{\lim_{k \rightarrow \infty} P(u_{k}(x))=u(x)} \mbox{ for a.e. } x\in \R^{N}$.
\end{compactenum}
Then, up to a subsequence, we have
$$
\lim_{k\rightarrow \infty} \|P(u_{k})-u\|_{L^{1}(\R^{N})}=0.
$$
\end{lem}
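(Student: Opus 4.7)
My approach is a Vitali-type argument, after using the compact embedding to extract a subsequence and identifying the pointwise limit. Fix an intermediate exponent $q\in(q_{1},q_{2})$. Since $\sup_{k}\|u_{k}\|<\infty$ and the embedding $X\hookrightarrow L^{q}(\R^{N})$ is compact, up to a subsequence (still denoted $(u_{k})$) there exists $v\in L^{q}(\R^{N})$ with $u_{k}\to v$ strongly in $L^{q}(\R^{N})$ and, after a further extraction, a.e.\ in $\R^{N}$. By continuity of $P$ one has $P(u_{k})\to P(v)$ a.e., so combined with $(iv)$ the pointwise limit is $u=P(v)$ a.e. The continuous embeddings at the endpoints give $\sup_{k}\bigl(\|u_{k}\|_{L^{q_{1}}}^{q_{1}}+\|u_{k}\|_{L^{q_{2}}}^{q_{2}}\bigr)\le C$.

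Given $\eta>0$, by $(i)$ and $(ii)$ choose $0<\delta_{\eta}<M_{\eta}$ with $|P(t)|\le\eta|t|^{q_{1}}$ on $\{|t|\le\delta_{\eta}\}$ and $|P(t)|\le\eta|t|^{q_{2}}$ on $\{|t|\ge M_{\eta}\}$, and a continuous cutoff $\phi_{\eta}\colon\R\to[0,1]$ that equals $1$ on $\{\delta_{\eta}\le|t|\le M_{\eta}\}$ and $0$ on $\{|t|\le\delta_{\eta}/2\}\cup\{|t|\ge 2M_{\eta}\}$. Writing $P=P^{1}_{\eta}+P^{2}_{\eta}$ with $P^{1}_{\eta}=\phi_{\eta}P$ and $P^{2}_{\eta}=(1-\phi_{\eta})P$, I get $\|P^{1}_{\eta}\|_{\infty}\le C_{\eta}$ with support in $\{\delta_{\eta}/2\le|t|\le 2M_{\eta}\}$, and the pointwise bound $|P^{2}_{\eta}(t)|\le\eta(|t|^{q_{1}}+|t|^{q_{2}})$. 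Hence $\|P^{2}_{\eta}(u_{k})\|_{L^{1}(\R^{N})}+\|P^{2}_{\eta}(v)\|_{L^{1}(\R^{N})}\le C\eta$ uniformly in $k$, and the analogous estimate produces $\|P(u_{k})\|_{L^{1}(\R^{N})}\le C$, so $u\in L^{1}(\R^{N})$ by Fatou.

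For local convergence, on any ball $B_{R}$ the identity $u=P^{1}_{\eta}(v)+P^{2}_{\eta}(v)$ gives
\[
|P(u_{k})-u|\le|P^{1}_{\eta}(u_{k})-P^{1}_{\eta}(v)|+|P^{2}_{\eta}(u_{k})|+|P^{2}_{\eta}(v)|.
\]
The first term tends to $0$ in $L^{1}(B_{R})$ by dominated convergence (bounded by $C_{\eta}$, a.e.\ convergent), while the remaining terms are $\le C\eta$ uniformly. Sending $k\to\infty$ and then $\eta\to 0$ yields $L^{1}_{\mathrm{loc}}(\R^{N})$ convergence. For the tails, since $|P^{1}_{\eta}(u_{k})|\le C_{\eta}\chi_{\{|u_{k}|\ge\delta_{\eta}/2\}}$, Chebyshev gives
\[
\int_{|x|>R}|P^{1}_{\eta}(u_{k})|\,dx\le C_{\eta}\bigl(2/\delta_{\eta}\bigr)^{q}\int_{|x|>R}|u_{k}|^{q}\,dx,
\]
and the right-hand side is uniformly small in $k$ for $R$ large, because $u_{k}\to v$ in $L^{q}(\R^{N})$ is a tight sequence. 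Combined with the $\eta$-small bound on $P^{2}_{\eta}(u_{k})$ and $\int_{|x|>R}|u|\to 0$, this yields uniform tightness of $P(u_{k})-u$, and together with the local estimate gives $\|P(u_{k})-u\|_{L^{1}(\R^{N})}\to 0$.

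The key difficulty is the uniform tightness at infinity of $(P(u_{k}))$: the $L^{q_{1}}$ and $L^{q_{2}}$ bounds alone are not tight. This is precisely where the \emph{compact} embedding into an intermediate $L^{q}$ with $q\in(q_{1},q_{2})$ becomes indispensable, since strong $L^{q}$-convergence automatically produces tightness of $(u_{k})$ in $L^{q}$, and this is transported to the bounded piece $P^{1}_{\eta}(u_{k})$ through the support property $|P^{1}_{\eta}(u_{k})|\le C_{\eta}\chi_{\{|u_{k}|\ge\delta_{\eta}/2\}}$.
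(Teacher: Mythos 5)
The paper itself does not prove this lemma: it is stated with the citation \cite{CW} and used as a black box, so there is no internal proof to compare against. Your argument is correct and is exactly the standard Strauss-type truncation proof one would expect behind the citation: extract a subsequence converging in some intermediate $L^{q}$, $q\in(q_{1},q_{2})$, and a.e.\ to a limit $v$, identify $u=P(v)$ a.e.; split $P=P^{1}_{\eta}+P^{2}_{\eta}$ with $P^{1}_{\eta}$ bounded and supported on a compact annulus and $|P^{2}_{\eta}(t)|\le\eta(|t|^{q_{1}}+|t|^{q_{2}})$; control the $P^{2}_{\eta}$ piece uniformly by the endpoint embeddings; handle $P^{1}_{\eta}$ locally by dominated convergence and at infinity via Chebyshev together with the $L^{q}$-tightness of $(u_{k})$ inherited from strong $L^{q}$ convergence. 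You also correctly pinpoint the role of the \emph{compact} intermediate embedding: the endpoint $L^{q_{1}}$, $L^{q_{2}}$ bounds alone do not prevent mass escaping to infinity, and it is precisely the strong $L^{q}$ convergence that supplies the needed tightness of the truncated piece. The bookkeeping ($u\in L^{1}$ by Fatou, $v\in L^{q_{1}}\cap L^{q_{2}}$ by Fatou from the uniform bounds, and the order of limits $k\to\infty$ then $\eta\to 0$, with $R$ chosen after $\eta$) all checks out. This is a sound, self-contained proof of the cited lemma.
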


\section{Infinitely many solutions to (\ref{P})}
\subsection{The modification of $g$ and introduction of a penalty function}
In order to give the proof of Theorem \ref{thmf}, we redefine the nonlinearity $g$ as follows: 
\begin{compactenum}[$(i)$]
\item If $g(t)> 0$ for all $t\geq \xi_{0}$, we simply extend $g$ to the negative axis: 
\begin{equation*}
\tilde{g}(t)= 
\left\{
\begin{array}{ll}
g(t) &\mbox{ for } t\geq 0 \\
-g(-t)    & \mbox{ for } t<0;
\end{array}
\right.
\end{equation*} 
\item If there exists $t_{0}> \xi_{0}$ such that $g(t_{0})=0$, we put 
\begin{equation*}
\tilde{g}(t)= 
\left\{
\begin{array}{ll}
g(t) &\mbox{ for } t\in [0, t_{0}] \\
0     & \mbox{ for } t> t_{0} \\
-\tilde{g}(-t)    & \mbox{ for } t<0.
\end{array}
\right.
\end{equation*} 
\end{compactenum}
Then $\tilde{g}$ satisfies $(g1)$, $(g3)$ and 
\begin{equation}\tag{$g2'$}
\lim_{t\rightarrow \infty} \frac{\tilde{g}(t)}{|t|^{\frac{N+2s}{N-2s}}} =0.
\end{equation}
%Moreover, if $u$ is a solution to $(-\Delta)^{s}u=\tilde{g}(u)$ in $\R^{N}$, then $u$ satisfies $-t_{0}\leq u\leq t_{0}$ in $\R^{N}$ (for instance, in the case $(ii)$ above, to see that $u\leq t_{0}$, it is enough to multiply by $(u-t_{0})^{+}$ both members of $(-\Delta)^{s}u=\tilde{g}(u)$ so that we find $[(u-t_{0})^{+}]^{2}_{H^{s}(\R^{N})}\leq 0$), that is $u$  is also a solution to \eqref{P}.
Moreover, by the weak maximum principle \cite{CS1}, any solution to $(-\Delta)^{s}u=\tilde{g}(u)$ in $\R^{N}$ is also a solution to (\ref{P}). Indeed, in the case $(ii)$ above, any solution to $(-\Delta)^{s}u=\tilde{g}(u)$ in $\R^{N}$ satisfies $-t_{0}\leq u(x)\leq t_{0}$ for any $x\in\R^{N}$. To prove this, we use $(u-t_{0})^{+}\in H^{s}(\R^{N})$ as test function in the weak formulation of $(-\Delta)^{s}u=\tilde{g}(u)$ in $\R^{N}$, and we get
$$
\iint_{\R^{2N}} \frac{(u(x)-u(y))}{|x-y|^{N+2s}} ((u(x)-t_{0})^{+}-(u(y)-t_{0})^{+})\, dx dy=\int_{\R^{N}} \tilde{g}(u) (u(x)-t_{0})^{+} \,dx.
$$
From the definition of $\tilde{g}$, it follows that $\int_{\R^{N}} \tilde{g}(u) (u(x)-t_{0})^{+} \,dx\leq 0$.
Then, recalling that $(x-y)(x^{+}-y^{+})\geq |x^{+}-y^{+}|^{2}$ for all $x, y\in \R$,
we can see that 
$$
\iint_{\R^{2N}} \frac{|(u(x)-t_{0})^{+}-(u(y)-t_{0})^{+}|^{2}}{|x-y|^{N+2s}} \, dx dy\leq 0,
$$
which implies that $[(u-t_{0})^{+}]^{2}_{H^{s}(\R^{N})}\leq 0$. Since $u\in H^{s}(\R^{N})$, we deduce that $u\leq t_{0}$ in $\R^{N}$. The other inequality is obtained in similar way by using $(u+t_{0})^{-}$ as test function.
Here we used the notations $x^{+}=\max\{x, 0\}$ and $x^{-}=\min\{x, 0\}$.\\
Therefore, from now on, we will tacitly write $g$ instead of $\tilde{g}$, and we will assume that $g$ satisfies $(g1)$, $(g2')$ and $(g3)$.
%and 
%\begin{equation}
%\lim_{t\rightarrow \infty} \frac{g(t)}{|t|^{\frac{N+2s}{N-2s}}} =0. 
%\end{equation}

\noindent
Now we introduce a penalty function to construct an auxiliary function.
For $t\geq 0$ we define
$$
f(t) =\max\Bigl\{0, \frac{1}{2}mt+g(t) \Bigr\}
$$
and 
$$
h(t) =t^{p} \sup_{0 <\tau \leq t} \frac{f(\tau)}{\tau^{p}}
$$
where $p$ is a positive number such that $1<p<\frac{N+2s}{N-2s}$.\\
Note that $f$ and $h$ are well defined in view of  $(g1)$. We extend $h$ as an odd function on $\R$ and we set 
$$
H(t) =\int_{0}^{t} h(\tau) d\tau.
$$
Next, we prove the following result whose proof follows the lines in \cite{HIT}. For reader's convenience we give the proof here. 
\begin{lem}\label{lemh}
The above function $h$ satisfies the following properties:
\begin{compactenum}[(h1)]
\item $h\in \mathcal{C}(\R, \R)$, $h(t)\geq 0$ and $h(-t)=-h(t)$ for all $t\geq 0$. 
\item There exists $\beta>0$ such that $h=0=H$ on $[-\beta, \beta]$.
\item For all $t\in \R$
$$
\frac{1}{2}mt^{2}+g(t)t\leq h(t)t\, \mbox{ and } \,\frac{1}{4}mt^{2}+G(t)\leq H(t).
$$
\item $ \displaystyle{\lim_{|t| \rightarrow \infty} \frac{h(t)}{|t|^{\frac{N+2s}{N-2s}}}=0}$. 
\item $h$ satisfies the following Ambrosetti-Rabinowitz condition:
$$
0\leq (p+1)H(t)\leq h(t)t \,\mbox{ for all } t\in \R.
$$
\item If $(u_{k})$ is a bounded sequence in $H^{s}_{rad}(\R^{N})$ then
$$
\lim_{k\rightarrow \infty} \int_{\R^{N}} h(u_{k}) u_{k}\, dx=\int_{\R^{N}} h(u) u \,dx.
$$
\end{compactenum}
\end{lem}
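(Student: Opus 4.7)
The plan is to exploit the two ideas baked into the definition of $h$: that $g(t)$ behaves like $-mt$ near $0$ (from $(g1)$), and that the normalization by $t^p$ forces a clean Ambrosetti-Rabinowitz structure on $h$. First I would deduce (h2) directly from $(g1)$: there is $\beta > 0$ such that $g(t) \leq -\frac{m}{2}t$ on $[0,\beta]$, so $\frac{m}{2}t + g(t) \leq 0$ and $f \equiv 0$ on $[0,\beta]$; by construction this forces $h \equiv 0$ on $[-\beta,\beta]$, whence $H \equiv 0$ there. For (h1), $f$ is continuous as a max of continuous functions, and the sup function $A(t) := \sup_{0<\tau\leq t} f(\tau)/\tau^p$ vanishes on $[0,\beta]$ while on $[\beta,\infty)$ the sup may be taken over the compact interval $[\beta, t]$; a standard argument then gives $A$ continuous and non-decreasing, hence $h(t) = t^p A(t)$ is continuous on $[0,\infty)$, with non-negativity and oddness built in.

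Next I would dispatch (h3) and (h5). For (h3), the lower bound $h(t) \geq f(t) \geq \frac{m}{2}t + g(t)$ for $t > 0$ yields $h(t)t \geq \frac{m}{2}t^2 + g(t)t$ immediately; the case $t < 0$ follows from the oddness of $g$ and $h$ together with evenness of $t^2$, and integration combined with the evenness of $H$ and $G$ produces the second inequality. For (h5), writing $h(\tau) = \tau^p A(\tau)$ with $A$ non-decreasing, I would estimate
\begin{equation*}
H(t) = \int_0^t \tau^p A(\tau)\,d\tau \leq A(t)\int_0^t \tau^p\,d\tau = \frac{t^{p+1} A(t)}{p+1} = \frac{h(t)t}{p+1},
\end{equation*}
and extend this to $t \leq 0$ by parity; non-negativity of $H$ is automatic from $h \geq 0$ on $[0,\infty)$.

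For (h4), the hardest of the pointwise properties, I expect the main technical step to be a splitting argument. Setting $q := (N+2s)/(N-2s)$, note $q > p$ by the choice of $p$, while by $(g2')$ the function $\phi(\tau) := f(\tau)/\tau^q$ tends to $0$ as $\tau \to +\infty$ and is bounded on $[\beta,\infty)$. Given $\e > 0$, I would choose $T \geq \beta$ with $\phi(\tau) \leq \e$ for $\tau \geq T$; writing $f(\tau)/\tau^p = \tau^{q-p}\phi(\tau)$ and splitting the sup into the ranges $(0,T]$ and $[T,t]$, one gets
\begin{equation*}
\frac{h(t)}{t^q} \leq \max\Bigl\{\frac{T^{q-p}\|\phi\|_{L^\infty([\beta,T])}}{t^{q-p}},\ \e\Bigr\}
\end{equation*}
for $t \geq T$, which tends to $\e$ as $t \to \infty$, and then to $0$. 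The negative side is handled by oddness of $h$.

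Finally, for (h6) I would invoke Lemma \ref{Strauss} with $X = H^s_{rad}(\R^N)$, $q_1 = 2$, $q_2 = 2^*_s$ and $P(t) := h(t)t$. Theorem \ref{Lions} supplies the compact embeddings of $H^s_{rad}(\R^N)$ into $L^q(\R^N)$ for $q \in (2, 2^*_s)$, so boundedness of $(u_k)$ yields a subsequence converging a.e. to some $u$, and continuity of $P$ gives hypothesis $(iv)$ with limit $h(u)u$. The growth conditions $(i)$ and $(ii)$ at $0$ and $\infty$ are exactly (h2) (since $P \equiv 0$ near $0$) and (h4) (since $P(t)/|t|^{q_2} = h(t)/|t|^q$ up to sign). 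Lemma \ref{Strauss} then produces $\|h(u_k)u_k - h(u)u\|_{L^1(\R^N)} \to 0$, which is (h6). The main obstacle in the whole plan is (h4), both because its proof requires the careful quantitative splitting above and because it feeds directly into the application of Lemma \ref{Strauss} in (h6).
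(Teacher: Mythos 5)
Your proposal is correct and follows essentially the same route as the paper: the paper also treats $(h1)$--$(h3)$ as direct consequences of $(g1)$ and the definitions, proves $(h4)$ by the same two-range splitting of the supremum (a compact piece bounded by a constant over $t^{q-p}$, and a tail where $(g2')$ gives $\varepsilon$), establishes $(h5)$ by the same monotonicity of $\sup_{0<\xi\le\tau} f(\xi)/\xi^p$ (you have simply named this function $A$), and deduces $(h6)$ from Lemma \ref{Strauss} with $P(t)=h(t)t$, $q_1=2$, $q_2=2^*_s$. Your write-up is just somewhat more explicit about the steps the paper leaves to the reader.
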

\begin{proof}
Let us observe that $(h1)$, $(h2)$ and $(h3)$ follow by $(g1)$ and the definition of $f$, $h$ and $H$.\\
$(h4)$ Firstly, we remark that
\begin{align*}
\frac{h(t)}{t^{2^{*}_{s}-1}}&=t^{-(2^{*}_{s}-1-p)} \sup_{0<\tau \leq t} \frac{f(\tau)}{\tau^{p}}\\
&=\sup_{0<\tau \leq t} \frac{f(\tau)}{\tau^{2^{*}_{s}-1}} \frac{\tau^{2^{*}_{s}-1-p}}{t^{2^{*}_{s}-1-p}}
\end{align*}
Since $f$ satisfies $(g2')$, for any $\varepsilon>0$ there exists $t_{\varepsilon}>0$ such that
$$
\Bigl|\frac{f(\tau)}{\tau^{2^{*}_{s}-1}}\Bigr|< \varepsilon \mbox{ for all } \tau \geq t_{\varepsilon}.
$$
Set $\displaystyle{C_{\varepsilon}=\sup_{0<\tau\leq t_{\varepsilon}} \Bigl|\frac{f(\tau)}{\tau^{2^{*}_{s}-1}}\Bigr|}$.
Then we have
\begin{align*}
\frac{h(t)}{t^{2^{*}_{s}-1}} &\leq \max \left\{\sup_{0<\tau \leq \tau_{\varepsilon}} \Bigl |\frac{f(\tau)}{\tau^{2^{*}_{s}-1}} \Bigr| \frac{\tau_{\varepsilon}^{2^{*}_{s}-1-p}}{t^{2^{*}_{s}-1-p}},  \sup_{\tau_{\varepsilon}\leq \tau\leq t_{\varepsilon}} \Bigl|\frac{f(\tau)}{\tau^{2^{*}_{s}-1}}\Bigr| \right\} \\
&\leq \max \left\{ \frac{C_{\varepsilon}\tau_{\varepsilon}^{2^{*}_{s}-1-p}}{t^{2^{*}_{s}-1-p}} , \varepsilon \right\} 
\end{align*}
so we deduce that
$$
0\leq \limsup_{t\rightarrow +\infty} \frac{h(t)}{t^{2^{*}_{s}-1}}\leq \varepsilon.
$$
$(h5)$ By the definition of $h$ and $H$ we have
\begin{align*}
(p+1)H(t)-h(t)t&=\int_{0}^{t} [(p+1)h(\tau)-h(t)] d\tau \\
&=\int_{0}^{t} \Bigl[(p+1)\tau^{p} \sup_{0< \xi \leq \tau} \frac{f(\xi)}{\xi^{p}}-t^{p} \sup_{0< \xi \leq t} \frac{f(\xi)}{\xi^{p}}\Bigr] d\tau \\
&\leq \sup_{0< \xi \leq t} \frac{f(\xi)}{\xi^{p}} \int_{0}^{t} (p+1)\tau^{p}-t^{p}d\tau=0.
\end{align*}
Now we prove $(h6)$.
Let $(u_{k})$ be a bounded sequence in $H^{s}_{rad}(\R^{N})$.
By using $(h2)$ and $(h4)$ we know that
$$
\lim_{|t|\rightarrow 0} \frac{h(t)t}{|t|^{2}}=0
$$
and
$$
\lim_{|t|\rightarrow \infty} \frac{h(t)t}{|t|^{2^{*}_{s}}}=0.
$$
Then we can apply Lemma \ref{Strauss} with $P(t)=h(t)t$, $q_{1}=2$ and $q_{2}=2^{*}_{s}$, to deduce that
$$
\lim_{k \rightarrow \infty} \int_{\R^{N}} h(u_{k}) u_{k} \, dx=\int_{\R^{N}} h(u) u \,dx.
$$

\end{proof}

%\begin{remark}
%The property $(h6)$ will be fundamental to obtain the compactness of bounded Palais-Smale sequences in Theorem \ref{psc} and Theorem \ref{thmcomp} below.
%\end{remark}

\subsection{Comparison between functionals}

Let us consider the following norm on $H^{s}(\R^{N})$ 
$$
\|u\|^{2} =[u]_{H^{s}(\R^{N})}^{2}+\frac{m}{2}\|u\|_{L^{2}(\R^{N})}^{2}
$$
which is equivalent to the standard norm $\|\cdot \|_{H^{s}(\R^{N})}$ defined in Section $2$.\\
Let us introduce the following functionals $I: H_{rad}^{s}(\R^{N})\rightarrow \R$ and  $J: H_{rad}^{s}(\R^{N})\rightarrow \R$, by setting
$$
I(u)=\frac{1}{2} \|u\|^{2}-\int_{\R^{N}} \frac{m}{4}u^{2}+G(u) dx 
$$
and 
$$
J(u)= \frac{1}{2} \|u\|^{2}-2\int_{\R^{N}} H(u) dx.
$$
By the growth assumptions on $g$, it is easy to check that $I$ and $J$ are well defined and that they are $\mathcal{C}^{1}(H^{s}_{rad}(\R^{N}))$-functionals.
Clearly, critical points of $I$ and $J$ are weak solutions to (\ref{P}) and $\displaystyle{(-\Delta)^{s}u+\frac{m}{2}u=2 h(u)}$ in $\R^{N}$, respectively.

\noindent
In what follows, we show that $I$ and $J$ have a symmetric mountain pass geometry:
\begin{lem}\label{lem4.1}
\noindent
\begin{compactenum}[$(i)$]
\item $I(u)\geq J(u)$ for all $u\in H^{s}_{rad}(\R^{N})$; 
\item There are $\delta>0$ and $\rho>0$ such that
\begin{align*}
&I(u), J(u)\geq \delta>0 \mbox{ for } \|u\|=\rho \\
&I(u), J(u)\geq 0 \mbox{ for } \|u\|\leq \rho;
\end{align*}
\item For any $n\in \N$ there exists an odd continuous map $\gamma_{n}: \mathbb{S}^{n-1}\rightarrow H^{s}_{rad}(\R^{N})$ such that
$$
I(\gamma_{n}(\sigma)), J(\gamma_{n}(\sigma))<0 \mbox{ for all } \sigma \in \mathbb{S}^{n-1}.
$$
\end{compactenum}
\end{lem}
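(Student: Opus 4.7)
My plan is to verify the three items in order. Items (i) and (ii) are soft: (i) is a pointwise integral comparison, and (ii) is a standard Sobolev estimate near zero that we can reduce from $I$ to $J$ thanks to (i). Item (iii) is the technical heart and requires an explicit odd family of radial functions, built by a scaling argument on disjoint radial shells.

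For (i), a direct rearrangement using $\|u\|^{2}=[u]^{2}_{H^{s}(\R^{N})}+\tfrac{m}{2}\|u\|^{2}_{L^{2}(\R^{N})}$ yields
\[
I(u)-J(u)=\int_{\R^{N}}\Bigl(2H(u)-\tfrac{m}{4}u^{2}-G(u)\Bigr)\,dx.
\]
Property (h3) gives $\tfrac{m}{4}t^{2}+G(t)\le H(t)$ pointwise and (h5) gives $H\ge 0$, so the integrand is at least $H(u)\ge 0$ and hence $I(u)\ge J(u)$. For (ii), by (i) it suffices to treat $J$. Property (h2) forces $h\equiv 0$ on $[-\beta,\beta]$, while (h4) gives $h(t)/|t|^{2^{*}_{s}-1}\to 0$ at infinity; together with the continuity of $h$ on any bounded interval, this produces a constant $C>0$ with $|h(t)|\le C|t|^{2^{*}_{s}-1}$ and hence $|H(t)|\le C|t|^{2^{*}_{s}}$ for every $t\in \R$. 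The fractional Sobolev embedding from Theorem \ref{Sembedding} then gives
\[
\int_{\R^{N}}H(u)\,dx\le C\|u\|_{L^{2^{*}_{s}}}^{2^{*}_{s}}\le C'\|u\|^{2^{*}_{s}},
\]
so $J(u)\ge \tfrac{1}{2}\|u\|^{2}-2C'\|u\|^{2^{*}_{s}}$. Since $2^{*}_{s}>2$, picking $\rho>0$ small enough that $2C'\rho^{2^{*}_{s}-2}\le \tfrac{1}{4}$ gives $J(u)\ge \rho^{2}/4=:\delta>0$ on $\{\|u\|=\rho\}$ and $J(u)\ge 0$ on $\{\|u\|\le \rho\}$; by (i) the same bounds hold for $I$.

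For (iii), the starting point is to produce a single radial test function with both energies strictly negative. Using (g3), a mollification of $\xi_{0}\chi_{B_{R}}$ with $R$ sufficiently large produces a compactly supported $w\in H^{s}_{rad}(\R^{N})$ with $\int_{\R^{N}} G(w)\,dx>0$; by (h3) this simultaneously forces
\[
2\int_{\R^{N}} H(w)\,dx-\tfrac{m}{4}\|w\|^{2}_{L^{2}(\R^{N})}\ge 2\int_{\R^{N}} G(w)\,dx+\tfrac{m}{4}\|w\|^{2}_{L^{2}(\R^{N})}>0.
\]
The scaling $w_{t}(x)=w(x/t)$ then gives
\[
I(w_{t})=\tfrac{t^{N-2s}}{2}[w]^{2}_{H^{s}}-t^{N}\!\!\int G(w),\qquad J(w_{t})=\tfrac{t^{N-2s}}{2}[w]^{2}_{H^{s}}-t^{N}\Bigl(2\!\!\int H(w)-\tfrac{m}{4}\|w\|^{2}_{L^{2}}\Bigr),
\]
so that $I(w_{t})$ and $J(w_{t})$ both tend to $-\infty$ as $t\to\infty$.

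To upgrade this to an $\mathbb{S}^{n-1}$-indexed family, I would pick $n$ pairwise disjoint concentric radial annuli $A_{1},\ldots,A_{n}$, obtained by translating the one-dimensional radial profile of $w$ outward (which preserves the radial symmetry in $\R^{N}$), place a radial bump $w_{j}$ in each $A_{j}$, and set
\[
\gamma_{n}(\sigma)(x)=\Bigl(\sum_{j=1}^{n}\sigma_{j}w_{j}\Bigr)\!(x/t),\qquad \sigma\in\mathbb{S}^{n-1}.
\]
The map is odd, continuous and takes values in $H^{s}_{rad}(\R^{N})$. The main obstacle will be the uniform positivity on $\sigma\in\mathbb{S}^{n-1}$ of $\int G(\sum\sigma_{j}w_{j})\,dx$ and of $2\int H(\sum\sigma_{j}w_{j})\,dx-\tfrac{m}{4}\|\sum\sigma_{j}w_{j}\|^{2}_{L^{2}}$, since $G$ is negative near $0$ by (g1) and a small coordinate $\sigma_{j}$ can push $\sigma_{j}w_{j}$ into that regime. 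One must also control the non-local cross-terms $[w_{j},w_{k}]_{H^{s}}=-2\iint \tfrac{w_{j}(x)w_{k}(y)}{|x-y|^{N+2s}}\,dx\,dy$, which can be made arbitrarily small by pushing the shells far apart. The standard HIT-type fix is to choose the annuli with sharply increasing volumes and the $w_{j}$ with suitably large amplitudes so that, for every $\sigma\in\mathbb{S}^{n-1}$, the component at the index where $|\sigma_{j}|$ is largest dominates all other contributions. Once uniform positivity is secured, taking $t$ large makes $I(\gamma_{n}(\sigma))$ and $J(\gamma_{n}(\sigma))$ strictly negative on the entire image of $\gamma_{n}$.
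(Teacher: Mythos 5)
Your proofs of items (i) and (ii) match the paper's in substance and are correct. The pointwise inequality in (i) follows from $(h3)$ exactly as you say, and the bound $|H(t)|\le C|t|^{2^{*}_{s}}$ together with the Sobolev embedding gives (ii); passing from $J$ to $I$ via (i) is the same observation the paper makes.

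Item (iii), however, is where the proposal has a genuine gap. The paper does \emph{not} build the odd family $\gamma_{n}$ from scratch: it invokes Theorem~10 of Berestycki--Lions \cite{BL2}, which supplies an odd continuous map $\pi_{n}:\mathbb{S}^{n-1}\to H^{1}(\R^{N})$ taking radial values with $\int_{\R^{N}}G(\pi_{n}(\sigma))\,dx\ge 1$ for every $\sigma$, and then uses the inclusion $H^{1}(\R^{N})\subset H^{s}(\R^{N})$ and compactness of $\pi_{n}(\mathbb{S}^{n-1})$ to get a uniform $H^{s}$-bound $M$, after which the dilation $\psi^{t}_{n}(\sigma)(x)=\pi_{n}(\sigma)(x/t)$ gives $I\le t^{N-2s}(M/2-t^{2s})\to-\infty$ uniformly in $\sigma$; item (i) then handles $J$. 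This cleverly sidesteps the two obstacles you yourself flag but do not resolve. First, the uniform positivity of $\int_{\R^{N}}G\bigl(\sum_{j}\sigma_{j}w_{j}\bigr)\,dx$ over all $\sigma\in\mathbb{S}^{n-1}$ is exactly the nontrivial content of Theorem~10 of \cite{BL2}; saying ``the standard HIT-type fix is to choose the annuli with sharply increasing volumes and amplitudes'' describes the strategy but does not carry it out, and a small $|\sigma_{j}|$ can put $\sigma_{j}w_{j}$ in the region where $G<0$ by $(g1)$, so the dominance argument genuinely has to be verified. Second, and specific to the fractional setting, disjointly supported radial bumps do \emph{not} have orthogonal Gagliardo seminorms: the cross-terms $\iint w_{j}(x)w_{k}(y)|x-y|^{-N-2s}\,dx\,dy$ are nonzero, and while they decay as the supports separate, controlling them uniformly in $\sigma$ and in the separation parameter is an additional estimate you have not supplied, and one the paper avoids entirely by working with a single already-constructed $H^{1}$ family. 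As written, your (iii) is a plan rather than a proof; either invoke Theorem~10 of \cite{BL2} as the paper does, or carry the annular construction through to completion including these two estimates.
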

\begin{proof}
$(i)$ follows by $(h3)$ of Lemma \ref{lemh}. \\
$(ii)$ By Lemma \ref{lemh} there exists $C>0$ such that
\begin{equation*}
H(t)\leq C|t|^{2^{*}_{s}} \mbox{ for all } t\in \R.
\end{equation*}
Then, by using Sobolev's embedding, we can see that
\begin{equation*}
J(u)\geq \frac{1}{2}\|u\|^{2}-C\|u\|_{L^{2^{*}_{s}}(\R^{N})}^{2^{*}_{s}}\geq \|u\|^{2}\Bigl[\frac{1}{2}-C'_{*}\|u\|^{2^{*}_{s}-2}\Bigr].
\end{equation*}
Since $2^{*}_{s}>2$, we can find $\delta, \rho>0$ such that $J(u)\geq \delta$ for  $\|u\|=\rho$, and 
$J(u)\geq 0$ if  $\|u\|\leq \rho$. 
From $(i)$, we deduce that $(ii)$ holds.\\
$(iii)$ 
For $n\in \N$ we consider the polyhedron in $\R^{n}$ defined by
$$
\mathbb{S}^{n-1}=\Bigl\{\sigma=(\sigma_{1}, \dots, \sigma_{n})\in \R^{n}: \sum_{j=1}^{n}|\sigma_{j}|=1\Bigr \}.
$$
By using Theorem $10$ in \cite{BL2}, for any $n\in \N$, there exists an odd continuous map $\pi_{n}: \mathbb{S}^{n-1}\rightarrow H^{1}(\R^{N})$ such that:
\begin{compactenum}[$(i)$]
\item $\pi_{n}(\sigma)$ is radial for all $\sigma\in \mathbb{S}^{n-1}$;
\item $0\notin \pi_{n}(\mathbb{S}^{n-1})$;
\item $\displaystyle{\int_{\R^{N}} G(\pi_{n}(\sigma)) dx \geq 1 \mbox{ for any } \sigma\in \mathbb{S}^{n-1}}$.
\end{compactenum}

\noindent
Since $H^{1}(\R^{N})\subset H^{s}(\R^{N})$ and $\pi_{n}(\mathbb{S}^{n-1})$ is compact, there exists $M>0$ such that 
$$
\|\pi_{n}(\sigma)\|_{H^{s}(\R^{N})}\leq M \mbox{ for any } \sigma\in \mathbb{S}^{n-1}.
$$

\noindent
Now, let us define $\psi^{t}_{n}(\sigma)(x)=\pi_{n}(\sigma)(\frac{x}{t})$ with $t\geq 1$.\\
Then we have
\begin{align*}
I(\psi^{t}_{n}(\sigma))&=t^{N-2s}\frac{1}{2} [\pi_{n}(\sigma)]^{2}_{H^{s}(\R^{N})}-t^{N}\int_{\R^{N}} G(\pi_{n}(\sigma)) dx \\
&\leq t^{N-2s} \Bigl[\frac{M}{2}-t^{2s}\Bigr] \rightarrow -\infty \mbox{ as } t\rightarrow +\infty.
\end{align*}
Therefore we can chose $\overline{t}$ such that $I(\psi^{\overline{t}}_{n}(\sigma))<0$ for all $\sigma \in \mathbb{S}^{n-1}$, 
and by setting $\gamma_{n}(\sigma)(x):=\psi^{\overline{t}}_{n}(\sigma)(x)$, we can infer that $\gamma_{n}$ satisfies the required properties for $I$. Taking into account $(i)$, we can conclude the proof.
\end{proof}

\noindent
Differently from $I(u)$, the comparison functional $J(u)$ satisfies the following compactness property:
\begin{thm}\label{psc}
The functional $J$ satisfies the Palais-Smale condition.
\end{thm}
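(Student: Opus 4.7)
The plan is to follow the classical strategy for a subcritical mountain--pass functional: use the Ambrosetti--Rabinowitz condition $(h5)$ to obtain boundedness, extract a weak limit, and then upgrade to strong convergence using the radial compactness of Theorem~\ref{Lions} combined with the two-sided growth of $h$. So let $(u_{k})\subset H^{s}_{rad}(\R^{N})$ be a Palais--Smale sequence, i.e.\ $J(u_{k})\to c$ and $\|J'(u_{k})\|_{*}\to 0$. First I would compute
\[
J(u_{k})-\frac{1}{p+1}\langle J'(u_{k}),u_{k}\rangle =\Bigl(\frac{1}{2}-\frac{1}{p+1}\Bigr)\|u_{k}\|^{2} +\frac{2}{p+1}\int_{\R^{N}}\bigl[h(u_{k})u_{k}-(p+1)H(u_{k})\bigr]\,dx.
\]
The last integrand is nonnegative by $(h5)$, and since $p>1$ we have $\frac{1}{2}-\frac{1}{p+1}>0$; combined with the standard bound $\bigl|J(u_{k})-\frac{1}{p+1}\langle J'(u_{k}),u_{k}\rangle\bigr|\leq c+1+\frac{\|J'(u_{k})\|_{*}\|u_{k}\|}{p+1}$, this forces $(u_{k})$ to be bounded in $H^{s}_{rad}(\R^{N})$.

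Up to a subsequence I then obtain $u_{k}\rightharpoonup u$ in $H^{s}_{rad}(\R^{N})$ and $u_{k}\to u$ a.e., while Theorem~\ref{Lions} provides strong convergence $u_{k}\to u$ in $L^{q}(\R^{N})$ for every $q\in(2,2^{*}_{s})$. The decisive technical step is to show
\[
\int_{\R^{N}} h(u_{k})(u_{k}-u)\,dx\longrightarrow 0.
\]
Given $\e>0$, $(h4)$ gives some $R_{\e}>\beta$ such that $|h(t)|\leq \e |t|^{2^{*}_{s}-1}$ for $|t|\geq R_{\e}$. Since $h\equiv 0$ on $[-\beta,\beta]$ by $(h2)$ and is continuous, the ratio $|h(t)|/|t|^{p}$ is bounded on $[-R_{\e},R_{\e}]\setminus(-\beta,\beta)$, so there exists $C_{\e}>0$ with $|h(t)|\leq C_{\e}|t|^{p}$ for all $|t|\leq R_{\e}$. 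Splitting the integral over $\{|u_{k}|\leq R_{\e}\}$ and its complement and applying H\"older's inequality, the first piece is bounded by $C_{\e}\|u_{k}\|_{L^{p+1}(\R^{N})}^{p}\|u_{k}-u\|_{L^{p+1}(\R^{N})}$, which vanishes as $k\to\infty$ since $p+1\in(2,2^{*}_{s})$; the second piece is bounded by $\e \|u_{k}\|_{L^{2^{*}_{s}}(\R^{N})}^{2^{*}_{s}-1}\|u_{k}-u\|_{L^{2^{*}_{s}}(\R^{N})}\leq C\e$ by Theorem~\ref{Sembedding}. Letting $\e\to 0$ yields the claim.

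To conclude, I would write
\[
\langle J'(u_{k}), u_{k}-u\rangle =\langle u_{k},u_{k}-u\rangle -2\int_{\R^{N}} h(u_{k})(u_{k}-u)\,dx,
\]
where $\langle \cdot,\cdot\rangle$ is the inner product on $H^{s}(\R^{N})$ associated with $\|\cdot\|$. The left-hand side vanishes because $J'(u_{k})\to 0$ in the dual and $(u_{k}-u)$ is bounded, and the integral vanishes by the previous step, so $\langle u_{k},u_{k}-u\rangle\to 0$. Combined with $\langle u,u_{k}-u\rangle\to 0$ from the weak convergence, this gives $\|u_{k}-u\|^{2}\to 0$, as desired. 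The main obstacle is the intermediate step: one must couple the critical-type decay of $h$ at infinity from $(h4)$ with its vanishing near the origin from $(h2)$ to extract an effective subcritical growth estimate of the form $|h(t)|\leq C_{\e}|t|^{p}+\e|t|^{2^{*}_{s}-1}$, so that radial compactness in $L^{p+1}$ can actually be exploited in spite of the critical-exponent ceiling on $h$.
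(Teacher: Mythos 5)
Your proof is correct, and the first half (boundedness via $(h5)$, extraction of a weak limit, radial compactness from Theorem~\ref{Lions}) is essentially identical to the paper's. Where you diverge is in how you upgrade weak to strong convergence. The paper first passes to the limit in $J'(u_{k})\varphi$ using the Strauss-type Lemma~\ref{strauss} to identify $u$ as a critical point of $J$, and then invokes property $(h6)$ of Lemma~\ref{lemh} --- itself a packaged consequence of the compactness Lemma~\ref{Strauss} from \cite{CW} with $P(t)=h(t)t$, $q_{1}=2$, $q_{2}=2^{*}_{s}$ --- to obtain $\int h(u_{k})u_{k}\to\int h(u)u$, whence $\|u_{k}\|\to\|u\|$. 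You instead skip the identification of the weak limit as a critical point entirely and prove directly that $\int h(u_{k})(u_{k}-u)\,dx\to 0$ by establishing the explicit interpolated growth bound $|h(t)|\le C_{\e}|t|^{p}+\e|t|^{2^{*}_{s}-1}$ from $(h2)$ and $(h4)$, splitting according to $\{|u_{k}|\le R_{\e}\}$, and using H\"older with the compact embedding $H^{s}_{rad}\hookrightarrow L^{p+1}$ on the subcritical piece and the continuous embedding into $L^{2^{*}_{s}}$ on the near-critical piece. In effect you reprove the needed case of the Strauss compactness lemma inline rather than quoting it; this makes the argument more self-contained and slightly shorter, at the cost of not recording $J'(u)=0$ along the way (which is not actually needed for the Palais--Smale condition itself). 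Both routes rely on the same two structural features of $h$ --- it vanishes identically near the origin and is subcritical at infinity --- so the difference is one of packaging rather than of substance, but your version is a genuinely more elementary presentation of the concluding step.
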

\begin{proof}
Let $c\in \R$ and let $(u_{k})\subset H^{s}_{rad}(\R^{N})$ such that
\begin{align}\label{psh}
J(u_{k})\rightarrow c \mbox{ and } J'(u_{k}) \rightarrow 0.
\end{align}
By using $(h5)$ of Lemma \ref{lemh}, we can see
\begin{align*}
J(u_{k})-\frac{J'(u_{k})u_{k}}{p+1}&=\Bigl(\frac{1}{2}-\frac{1}{p+1}\Bigr)\|u_{k}\|^{2}-2\int_{\R^{N}} \left[H(u_{k})-\frac{1}{p+1} h(u_{k})u_{k} \right] \, dx\\ 
&\geq \Bigl(\frac{1}{2}-\frac{1}{p+1}\Bigr)\|u_{k}\|^{2}
\end{align*}
so we can deduce that $(u_{k})$ is bounded in $H^{s}_{rad}(\R^{N})$.

\noindent
Then, by using Lemma \ref{Lions}, we may assume, up to a subsequence, that
\begin{align}\label{convergencesforu}
& u_{k} \rightharpoonup u \mbox{ in } H^{s}_{rad}(\R^{N}), \nonumber \\
& u_{k} \rightarrow u \mbox{ in } L^{q}(\R^{N}) \quad \forall q\in (2, 2^{*}_{s}),\\
& u_{k} \rightarrow u  \mbox{ a.e. } \R^{N}.\nonumber
\end{align}
%By using the fact that $J'(u_{k})\rightarrow 0$ 
Now, by using $(h1)$ and $(h4)$ in Lemma \ref{lemh} and (\ref{convergencesforu}), we can apply the first part of Lemma \ref{strauss}
%a variant of Strauss' compactness Lemma (see Proposition $2.5$ in \cite{ADP}) 
with $P(t)=h(t)$ and $Q(t)=|t|^{2^{*}_{s}-1}$, to infer that for any $\varphi \in \mathcal{C}^{\infty}_{0}(\R^{N})$
%$$
%\lim_{k \rightarrow \infty} \int_{\R^{N}} g_{i}(u_{k}(x)) \varphi(x) dx=\int_{\R^{N}} g_{i}(u(x)) \varphi(x) dx.
%$$
%and
\begin{align}\label{vincenzo}
\lim_{k \rightarrow \infty} \int_{\R^{N}} h(u_{k}(x)) \varphi(x) dx=\int_{\R^{N}} h(u(x)) \varphi(x) dx.
\end{align}
Putting together (\ref{psh}), (\ref{convergencesforu}) and (\ref{vincenzo}) we obtain
\begin{align*}
J'(u)\varphi=J'(u_{k})\varphi-\left[\langle u_{k}-u, \varphi \rangle_{H^{s}(\R^{N})}-2\int_{\R^{N}} (h(u_{k})-h(u)) \varphi \,dx\right]\rightarrow 0
\end{align*}
for any $\varphi \in \mathcal{C}^{\infty}_{0}(\R^{N})$.
Since $\mathcal{C}^{\infty}_{0}(\R^{N})$ is dense in $H^{s}_{rad}(\R^{N})$, it follows that
$$
J'(u) \varphi=0 \mbox{ for all } \varphi \in H^{s}_{rad}(\R^{N}).
$$
Moreover,  $J'(u)u=0$, which implies that $\|u\|^{2}=2\int_{\R^{N}} h(u) u \,dx$.\\
By using $(h6)$ of Lemma \ref{lemh}, we know that 
\begin{equation}\label{29}
\lim_{k \rightarrow \infty} \int_{\R^{N}} h(u_{k}) u_{k} \,dx=\int_{\R^{N}} h(u) u\, dx.
\end{equation}
Taking into account the boundedness of $(u_{k})$ and (\ref{psh}), we get $J'(u_{k})u_{k}\rightarrow 0$, which together with (\ref{29}) implies that $	\|u_{k}\|\rightarrow \|u\|$ as $k \rightarrow \infty$. 
Therefore, we can conclude that $u_{k} \rightarrow u$ in $H^{s}_{rad}(\R^{N})$ as $k \rightarrow \infty$.

\end{proof}

\noindent
Now we define minimax values of $I$ and $J$ by using maps $(\gamma_{n})$ in Lemma \ref{lem4.1}.
For any $n\in \N$, we define $b_{n}$ and $c_{n}$ as follows:
$$
b_{n}=\inf_{\gamma\in \Gamma_{n}} \max_{\sigma \in D_{n}} I(\gamma(\sigma)),
$$
$$
c_{n}=\inf_{\gamma\in \Gamma_{n}} \max_{\sigma \in D_{n}} J(\gamma(\sigma))
$$
where $D_{n}=\{\sigma \in \R^{n}: |\sigma|\leq 1\}$ and
$$
\Gamma_{n}=\{\gamma\in C(D_{n}, H^{s}_{rad}(\R^{N})): \gamma \mbox{ is odd and } \gamma=\gamma_{n} \mbox{ on } \mathbb{S}^{n-1}\}.
$$
The values $b_{n}$ and $c_{n}$ satisfy the following properties. 
\begin{lem}\label{lem4.4}
\noindent
\begin{compactenum}[(i)]
\item $\Gamma_{n}\neq \emptyset$ for any $n\in \N$;
\item $0<\delta\leq c_{n}\leq b_{n}$ for any $n\in \N$,
where $\delta$ appears in Lemma \ref{lem4.1}.
\end{compactenum}
\end{lem}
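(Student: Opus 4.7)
The plan is to prove (i) via an explicit radial extension of $\gamma_n$ from $\mathbb{S}^{n-1}$ to $D_n$, and to prove (ii) by combining the pointwise inequality $J\leq I$ from Lemma \ref{lem4.1}(i) with a sphere-crossing argument based on the mountain-pass geometry furnished by Lemma \ref{lem4.1}(ii)--(iii).

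For (i), I would extend $\gamma_n$ homogeneously. Define $\gamma\colon D_n\to H^s_{rad}(\R^N)$ by
\[
\gamma(\sigma) := |\sigma|\,\gamma_n\!\left(\frac{\sigma}{|\sigma|}\right)\ \text{ for } \sigma\in D_n\setminus\{0\}, \qquad \gamma(0) := 0.
\]
Continuity on $D_n\setminus\{0\}$ follows from the continuity of $\gamma_n$ on $\mathbb{S}^{n-1}$. Since $\gamma_n(\mathbb{S}^{n-1})$ is a compact subset of $H^s_{rad}(\R^N)$, it is bounded by some $M>0$, so $\|\gamma(\sigma)\|\leq M|\sigma|\to 0$ as $\sigma\to 0$, giving continuity at the origin. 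Since $\gamma_n$ is odd and the map $\sigma\mapsto \sigma/|\sigma|$ is odd, $\gamma$ is odd; by construction $\gamma|_{\mathbb{S}^{n-1}}=\gamma_n$. Hence $\gamma\in\Gamma_n$ and $\Gamma_n\neq\emptyset$.

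For (ii), the upper bound $c_n\leq b_n$ is immediate from Lemma \ref{lem4.1}(i): for every $\gamma\in\Gamma_n$ and every $\sigma\in D_n$, $J(\gamma(\sigma))\leq I(\gamma(\sigma))$, so $\max_{D_n} J\circ\gamma\leq \max_{D_n} I\circ\gamma$, and passing to the infimum over $\Gamma_n$ gives the inequality.

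The only substantive point is the lower bound $c_n\geq\delta$. Fix $\gamma\in\Gamma_n$. Oddness forces $\gamma(0)=0$, so $\|\gamma(0)\|=0<\rho$. On $\mathbb{S}^{n-1}$ we have $\gamma=\gamma_n$; by Lemma \ref{lem4.1}(iii), $I(\gamma_n(\sigma))<0$ for all $\sigma\in\mathbb{S}^{n-1}$, while Lemma \ref{lem4.1}(ii) gives $I(u)\geq 0$ whenever $\|u\|\leq\rho$. Hence $\|\gamma(\sigma)\|>\rho$ for every $\sigma\in\mathbb{S}^{n-1}$. Pick any $\sigma_0\in\mathbb{S}^{n-1}$; by convexity of $D_n$ the segment $\{t\sigma_0:\ t\in[0,1]\}$ lies in $D_n$, and the continuous function $t\mapsto\|\gamma(t\sigma_0)\|$ takes the value $0$ at $t=0$ and a value strictly larger than $\rho$ at $t=1$. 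By the intermediate value theorem there exists $t^*\in(0,1)$ with $\|\gamma(t^*\sigma_0)\|=\rho$, and Lemma \ref{lem4.1}(ii) applied to $J$ yields $J(\gamma(t^*\sigma_0))\geq\delta$. Consequently $\max_{D_n} J\circ\gamma\geq\delta$ for every $\gamma\in\Gamma_n$, which gives $c_n\geq\delta>0$. No step is a real obstacle; the only detail requiring a moment of care is continuity of the extension at $0$, handled by the compactness of $\mathbb{S}^{n-1}$ and the consequent boundedness of $\gamma_n(\mathbb{S}^{n-1})$ in $H^s_{rad}(\R^N)$.
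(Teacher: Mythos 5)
Your proof is correct and follows essentially the same route as the paper's. Part (i) uses the identical radial extension $\tilde\gamma_n(\sigma)=|\sigma|\gamma_n(\sigma/|\sigma|)$; part (ii) derives $c_n\le b_n$ from $J\le I$ exactly as the paper does, and your intermediate value argument on the segment $t\mapsto\|\gamma(t\sigma_0)\|$ is precisely the justification (left implicit in the paper) for the asserted fact that $\{u:\|u\|=\rho\}\cap\gamma(D_n)\neq\emptyset$ for every $\gamma\in\Gamma_n$.
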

\begin{proof}
$(i)$ Let us define
\begin{equation*}
\tilde{\gamma}_{n}(\sigma):=
\left\{
\begin{array}{ll}
|\sigma| \gamma_{n}(\frac{\sigma}{|\sigma|}) &\mbox{ if } \sigma \in D_{n}\setminus \{0\}\\
0 &\mbox{ for } \sigma=0.
\end{array}
\right.
\end{equation*}
Then, it is clear that $\tilde{\gamma}_{n}\in \Gamma_{n}$.\\
$(ii)$ Since $I(u)\geq J(u)$ in view  of Lemma \ref{lem4.1}, it holds $c_{n}\leq b_{n}$.
The property $\delta \leq c_{n}$ follows from the fact that
$$
\{u\in H^{s}_{rad}(\R^{N}): \|u\|=\rho \} \cap \gamma(D_{n})\neq \emptyset \mbox{ for all } \gamma \in \Gamma_{n}.
$$
\end{proof}

\begin{lem}\label{lem3.2}
\noindent
\begin{compactenum}[(i)]
\item The value $c_{n}$ is a critical value of $J$.
\item $c_{n} \rightarrow \infty$ as $n \rightarrow \infty$. 
\end{compactenum}
\end{lem}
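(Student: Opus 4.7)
The plan is to derive both statements from the symmetric mountain pass theorem of Ambrosetti-Rabinowitz applied to $J$ on $H^{s}_{rad}(\R^{N})$. All the standard hypotheses are at hand: $J$ is even and of class $\mathcal{C}^{1}$ (since $h$ is odd); Lemma \ref{lem4.1} provides the symmetric mountain pass geometry with boundary maps $\gamma_{n}$ on which $J$ is strictly negative; Theorem \ref{psc} gives the Palais-Smale condition; and Lemma \ref{lemh}(h5) is the Ambrosetti-Rabinowitz inequality with exponent $p+1>2$, which is precisely what rules out trivial unbounded minimax sequences.

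For part (i), I would argue by contradiction using the odd deformation lemma. Assume $c_{n}$ is a regular value of $J$. Because $J$ is even and satisfies (PS), one can build an odd pseudo-gradient vector field near the level $c_{n}$ and integrate it to obtain, for some small $\epsilon\in(0,c_{n}/2)$, an odd continuous map $\eta\colon H^{s}_{rad}(\R^{N})\to H^{s}_{rad}(\R^{N})$ with $\eta(\{J\leq c_{n}+\epsilon\})\subset\{J\leq c_{n}-\epsilon\}$ and $\eta=\mathrm{id}$ on $\{J\leq 0\}$. Given a near-optimal $\gamma\in\Gamma_{n}$ with $\max_{D_{n}}J(\gamma)\leq c_{n}+\epsilon$, the composition $\eta\circ\gamma$ is still odd and continuous; since $J(\gamma_{n}(\sigma))<0$ on $\mathbb{S}^{n-1}$, it agrees there with $\gamma_{n}$, so $\eta\circ\gamma\in\Gamma_{n}$. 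But then $\max_{D_{n}}J(\eta\circ\gamma)\leq c_{n}-\epsilon$, contradicting the definition of $c_{n}$.

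For part (ii), I would combine (i) with a Krasnoselskii-genus argument in the spirit of Rabinowitz. Suppose, by contradiction, that $c_{n}\leq M$ for all $n$ and some $M<\infty$. Define
\[
K:=\{u\in H^{s}_{rad}(\R^{N})\setminus\{0\}:J'(u)=0,\ \delta\leq J(u)\leq M\}.
\]
This set is closed, symmetric, bounded away from $0$, and compact by (PS); hence $\mathrm{genus}(K)=:k$ is finite. Pick a symmetric open neighborhood $U\supset K$ with $\mathrm{genus}(\overline{U})=k$ and $\overline{U}\subset H^{s}_{rad}(\R^{N})\setminus\{0\}$. A refined equivariant deformation then yields, for some $\epsilon>0$, an odd continuous $\eta$ satisfying $\eta(\{J\leq M+\epsilon\}\setminus U)\subset\{J\leq\delta/2\}$ and $\eta=\mathrm{id}$ on $\{J\leq 0\}$. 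Choosing $\gamma\in\Gamma_{k+1}$ near-optimal for $c_{k+1}$ and setting $\tilde{\gamma}:=\eta\circ\gamma\in\Gamma_{k+1}$, the image $\tilde{\gamma}(D_{k+1})$ is contained in $\{J\leq\delta/2\}\cup U$. The standard intersection-genus property of Rabinowitz-type classes $\Gamma_{k+1}$ forces $\tilde{\gamma}(D_{k+1})\cap\{J\geq\delta\}$ to have genus at least $k+1$, whereas it now sits inside $U$, contradicting $\mathrm{genus}(\overline{U})=k$.

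The main technical obstacle is the intersection-genus step in (ii): verifying that, for every $\gamma\in\Gamma_{n}$, the "waist" $\gamma(D_{n})\cap\{\|u\|=\rho\}$ has Krasnoselskii genus at least $n$. This is delicate because $\Gamma_{n}$ prescribes values on $\mathbb{S}^{n-1}$ rather than letting them range freely over odd spherical maps; it is handled by a Borsuk-Ulam argument applied to the radial odd extension $\tilde{\gamma}_{n}(\sigma)=|\sigma|\gamma_{n}(\sigma/|\sigma|)$ constructed in the proof of Lemma \ref{lem4.4}(i). Once this genus lower bound is established, both (i) and (ii) follow in the familiar Ambrosetti-Rabinowitz manner.
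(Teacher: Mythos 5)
Your part (i) matches the paper in spirit: the paper simply says that (i) follows from the Palais--Smale condition proved in Theorem \ref{psc}, and the odd deformation argument you sketch is exactly the standard way to make that one-liner precise. No issue there.

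For part (ii), you take a genuinely different route from the paper, and it has a gap. The paper introduces an \emph{auxiliary} family of minimax values
$d_{n}=\inf_{A\in\Sigma_{n}}\max_{A}J$, built from the Rabinowitz-type classes $\Sigma_{n}$ that allow one to excise from $\overline{D_{m}}$ a symmetric closed set $Y$ of genus $\le m-n$, shows $d_{n}\le d_{n+1}\le c_{n}$, and then quotes Proposition $9.33$ of \cite{Rab} to conclude $d_{n}\to\infty$. You instead argue by direct contradiction: assume $c_{n}\le M$, observe that by (PS) the critical set $K$ in the slab $\delta\le J\le M$ is compact with finite genus $k$, and then try to deform $\{J\le M+\epsilon\}\setminus U$ into $\{J\le\delta/2\}$ to contradict an intersection-genus bound for $\Gamma_{k+1}$. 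The problem is the deformation you invoke. The standard equivariant deformation lemma, at a single level $c$, produces $\eta$ with $\eta(\{J\le c+\epsilon\}\setminus N)\subset\{J\le c-\epsilon\}$ for $N$ a neighborhood of $K_{c}$, with $\epsilon$ small; it does \emph{not} give, for a whole interval $[\delta/2,M+\epsilon]$ containing the compact critical set $K$, an odd map with $\eta(\{J\le M+\epsilon\}\setminus U)\subset\{J\le\delta/2\}$. A pseudo-gradient trajectory that starts outside $U$ can flow \emph{into} $U$ and stall near a critical point, so at time $T$ you only control its position to be in $\{J\le\delta/2\}\cup U$ (or in a slightly shrunken neighborhood $\overline{\tilde U}\subset U$), not in $\{J\le\delta/2\}$. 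The $\Sigma_{n}$-classes of Rabinowitz exist precisely to circumvent this: one excises the (low-genus) preimage $\gamma^{-1}(U)$ from the domain, so that only a one-level deformation of the remaining image is needed; this is the monotonicity/genus bookkeeping that the paper's $d_{n}$ encode and that your direct composition $\eta\circ\gamma$ does not reproduce. (There is also a smaller slip at the end: even granting your $\eta$, the set $\tilde{\gamma}(D_{k+1})\cap\{J\ge\delta\}$ lands in $\eta(\overline{U})$, not in $\overline{U}$; this one is harmless because $\eta$ is a homeomorphism so $\mathrm{genus}(\eta(\overline{U}))=\mathrm{genus}(\overline{U})=k$, but it is worth stating.)

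To repair the argument you should either (a) replace the strong deformation claim by the correct weaker one, $\eta(\{J\le M+\epsilon\})\subset\{J\le\delta/2\}\cup\overline{\tilde U}$ with $\overline{\tilde U}\subset U$, and rework the genus count accordingly, or (b) follow the paper and compare $c_{n}$ from below with the genus-flexible minimax values $d_{n}$, citing \cite{Rab}. Your intersection-genus lemma (that any $\gamma\in\Gamma_{n}$ satisfies $\mathrm{genus}(\gamma(D_{n})\cap\{\|u\|=\rho\})\ge n$, hence $\mathrm{genus}(\gamma(D_{n})\cap\{J\ge\delta\})\ge n$ because $\partial B_{\rho}\subset\{J\ge\delta\}$) is correct and is indeed the right Borsuk--Ulam ingredient; it is the deformation step, not the genus count, that needs fixing.
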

\begin{proof}
$(i)$ follows by Lemma \ref{psc}.\\
$(ii)$ Set
$$
\Sigma_{n}\!\!=\!\!\left \{h\in C(\overline{D_{m}\setminus Y}): h\in \Gamma_{m}, m\geq n, Y\in \mathcal{E}_{m} \mbox{ and } genus(Y)\leq m-n \right\}
$$
where $\mathcal{E}_{m}$ is the family of closed sets $A\subset \R^{m}\setminus \{0\}$ such that $-A=A$ and $genus(A)$ is the Krasnoselski genus of $A$.
Now we define another sequence of minimax values by setting
$$
d_{n}=\inf_{A\in \Sigma_{n}} \max_{u\in A} J(u).
$$
Then $d_{n}\leq d_{n+1}$ for all $n\in \N$, and $d_{n}\leq c_{n}$ for all $n\in \N$. 
Since $J$ satisfies the Palais-Smale condition, we can proceed as in the proof of Proposition $9.33$ in \cite{Rab} to infer that $d_{n}\rightarrow +\infty$ as $n \rightarrow \infty$.

\end{proof}

\noindent
Let us observe that $(ii)$ of Lemma \ref{lem4.4} and $(ii)$ of Lemma \ref{lem3.2} yield
\begin{equation}\label{bn}
b_{n}>0 \mbox{ for all } n\in \N \mbox{ and } \lim_{n \rightarrow \infty} b_{n}=\infty.
\end{equation}
In the next section we will prove that $b_{n}$ are critical values of $I(u)$.

\subsection{An auxiliary functional on augmented space}

Let us introduce the following functional 
$$
\tilde{I}(\theta, u)=\frac{1}{2}e^{(N-2s)\theta} [u]_{H^{s}(\R^{N})}^{2}-e^{N \theta}\int_{\R^{N}} G(u) dx
$$
for $(\theta, u)\in \R\times H^{s}_{rad}(\R^{N})$.\\
We endow $\R\times H^{s}_{rad}(\R^{N})$ with the norm
$$
\|(\theta, u)\|_{\R\times H^{s}(\R^{N})} =\sqrt{|\theta|^{2}+\|u\|^{2}}.
$$
Let us point out that $\tilde{I}\in \mathcal{C}^{1}(\R\times H^{s}_{rad}(\R^{N}), \R)$, $\tilde{I}(0,u)=I(u)$ and being 
\begin{align*}
&\iint_{\R^{2N}} \frac{|u(e^{-\theta}x)-u(e^{-\theta}y)|^{2}}{|x-y|^{N+2s}} dx dy=e^{(N-2s) \theta}[u]_{H^{s}(\R^{N})}^{2}\\
&\int_{\R^{N}} G(u(e^{-\theta}x)) dx=e^{N\theta}\int_{\R^{N}} G(u(x)) dx
\end{align*}
we have 
\begin{align}\label{4.2}
\tilde{I}(\theta, u(x))=I(u(e^{-\theta}x)) \mbox{ for all } \theta \in \R, u\in H^{s}_{rad}(\R^{N}).
\end{align}
We also define minimax values $\tilde{b}_{n}$ for $\tilde{I}(\theta, u)$ by
$$
\tilde{b}_{n}=\inf_{\tilde{\gamma}\in \tilde{\Gamma}_{n}} \max_{\sigma \in D_{n}} \tilde{I}(\tilde{\gamma}(\sigma)),
$$
where 
\begin{align*}
\tilde{\Gamma}_{n}=\{\tilde{\gamma}\in C(D_{n}, \R \times H^{s}_{rad}(\R^{N})): \tilde{\gamma}(\sigma)=(\theta(\sigma), \eta(\sigma)) \mbox{ satisfies } \\
(\theta(-\sigma), \eta(-\sigma))= (\theta(\sigma), -\eta(\sigma)) \mbox{ for all } \sigma \in D_{n} \\
(\theta(\sigma), \eta(\sigma))= (0, \gamma_{n}(\sigma)) \mbox{ for all }  \sigma \in \mathbb{S}^{n-1}\}.
\end{align*}
Through the modified functional $\tilde{I}(\theta, u)$ we aim to show that $b_{n}$ are critical values for $I(u)$.

\noindent
We begin proving that
\begin{lem}\label{lembn}
$\tilde{b}_{n}=b_{n}$ for all $n\in \N$.
\end{lem}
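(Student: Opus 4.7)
My plan is to establish $\tilde{b}_n = b_n$ by proving the two inequalities separately, using the scaling identity (\ref{4.2}) as the bridge between the two minimax classes.

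For the inequality $\tilde{b}_n \leq b_n$, I would take any $\gamma \in \Gamma_n$ and set $\tilde{\gamma}(\sigma) := (0, \gamma(\sigma))$. Because $\theta(\sigma) \equiv 0$ is trivially even and $\gamma$ is odd, the symmetry requirement $(\theta(-\sigma), \eta(-\sigma)) = (\theta(\sigma), -\eta(\sigma))$ holds; the boundary condition on $\mathbb{S}^{n-1}$ is inherited from $\gamma$. Thus $\tilde{\gamma} \in \tilde{\Gamma}_n$. By (\ref{4.2}) we have $\tilde{I}(0, \gamma(\sigma)) = I(\gamma(\sigma))$, so $\max_{D_n} \tilde{I}(\tilde{\gamma}) = \max_{D_n} I(\gamma)$, and passing to the infimum yields $\tilde{b}_n \leq b_n$.

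For the reverse inequality $b_n \leq \tilde{b}_n$, given $\tilde{\gamma}(\sigma) = (\theta(\sigma), \eta(\sigma)) \in \tilde{\Gamma}_n$ I would define
\[
\gamma(\sigma)(x) := \eta(\sigma)\bigl(e^{-\theta(\sigma)} x\bigr),
\]
and verify that $\gamma \in \Gamma_n$. Radiality of $\gamma(\sigma)$ is immediate from radiality of $\eta(\sigma)$. Oddness follows from the symmetry requirement: $\gamma(-\sigma)(x) = \eta(-\sigma)(e^{-\theta(-\sigma)}x) = -\eta(\sigma)(e^{-\theta(\sigma)}x) = -\gamma(\sigma)(x)$. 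On $\mathbb{S}^{n-1}$, $\theta(\sigma) = 0$ and $\eta(\sigma) = \gamma_n(\sigma)$, so $\gamma = \gamma_n$ there. Then by (\ref{4.2}), $I(\gamma(\sigma)) = \tilde{I}(\theta(\sigma), \eta(\sigma))$, and taking maxima and infima gives $b_n \leq \tilde{b}_n$.

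The only genuinely technical point is the continuity of the map $\sigma \mapsto \gamma(\sigma)$ from $D_n$ to $H^s_{rad}(\R^N)$. This reduces to showing that the scaling action $(\theta, u) \mapsto u(e^{-\theta}\cdot)$ is jointly continuous from $\R \times H^s(\R^N)$ into $H^s(\R^N)$. I would split the estimate
\[
\|u_k(e^{-\theta_k}\cdot) - u_0(e^{-\theta_0}\cdot)\|_{H^s} \leq \|(u_k-u_0)(e^{-\theta_k}\cdot)\|_{H^s} + \|u_0(e^{-\theta_k}\cdot) - u_0(e^{-\theta_0}\cdot)\|_{H^s}
\]
and control the first summand using the scaling identities $[u(e^{-\theta}\cdot)]^2_{H^s} = e^{(N-2s)\theta}[u]^2_{H^s}$ and $\|u(e^{-\theta}\cdot)\|^2_{L^2} = e^{N\theta}\|u\|^2_{L^2}$ together with boundedness of $\theta_k$; the second summand is handled by approximating $u_0$ by $\mathcal{C}^\infty_0$ functions and applying dominated convergence together with the same scaling identities. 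I expect this continuity verification to be the main obstacle; the rest of the argument is essentially bookkeeping, and the $\R$-component of $\tilde{\gamma}$ plays no role in defining $\gamma$ beyond rescaling, which is why the two minimax levels must coincide.
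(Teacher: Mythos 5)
Your proof is correct and follows essentially the same route as the paper: the inclusion $\Gamma_n \hookrightarrow \tilde{\Gamma}_n$ via $\gamma \mapsto (0,\gamma)$ for one inequality, and the rescaling $\gamma(\sigma) := \eta(\sigma)(e^{-\theta(\sigma)}\cdot)$ together with identity (\ref{4.2}) for the other. The only difference is that you spell out the joint continuity of the scaling action $(\theta,u)\mapsto u(e^{-\theta}\cdot)$, which the paper dismisses with ``it is easy to see that $\gamma\in\Gamma_n$''; your sketch of that verification is sound.
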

\begin{proof}
Let us observe that $(0, \gamma(\sigma))\in \tilde{\Gamma}_{n}$ for any $\gamma\in \Gamma_{n}$, so we can see that $\Gamma_{n} \subset \tilde{\Gamma}_{n}$. 
Since $\tilde{I}(0, u)=I(u)$, by the definitions of $b_{n}$ and $\tilde{b}_{n}$, it follows that $\tilde{b}_{n}\leq b_{n}$ for all $n\in \N$.\\
Now, we take $\tilde{\gamma}(\sigma)=(\theta(\sigma), \gamma(\sigma))\in \tilde{\Gamma}_{n}$ and we put $\gamma(\sigma)=\eta(\sigma)(e^{-\theta(\sigma)}x)$. Then it is easy to see that $\gamma\in \Gamma_{n}$, and  by using (\ref{4.2}), $I(\gamma(\sigma))=\tilde{I}(\tilde{\gamma}(\sigma))$ for all $\sigma \in D_{n}$. As a consequence we have $\tilde{b}_{n}\geq b_{n}$ for all $n\in \N$.

\end{proof}

\noindent
At this point, we show that the functional $\tilde{I}(\theta, u)$ admits a Palais-Smale sequence in $\R\times H^{s}_{rad}(\R^{N})$
with a property related to the fractional Pohozaev identity \cite{CW}:
\begin{equation}\label{pepsiboom}
\frac{N-2s}{2}\int_{\R^{N}} |(-\Delta)^{\frac{s}{2}} u|^{2} \, dx=N \int_{\R^{N}} G(u) \, dx.
\end{equation}
Firstly we give a version of Ekeland's principle:
\begin{lem}\label{lem4.3}
Let $n \in \N$ and $\varepsilon>0$. Assume that $\tilde{\gamma}_{n}\in \tilde{\Gamma}_{n}$ satisfies
$$
\max_{\sigma \in D_{n}} \tilde{I}(\tilde{\gamma}(\sigma))\leq \tilde{b}_{n}+\varepsilon.
$$
Then there exists $(\theta, u)\subset \R \times H^{s}_{rad}(\R^{N})$ such that:
\begin{compactenum}[(i)]
\item $\dist_{\R \times H^{s}_{rad}(\R^{N})}((\theta, u), \tilde{\gamma}(D_{n}))\leq 2 \sqrt{\varepsilon}$;
\item $\tilde{I}(\theta, u)\in [b_{n}-\varepsilon, b_{n}+\varepsilon ]$;
\item $\|\nabla \tilde{I}(\theta, u) \|_{\R \times (H^{s}_{rad}(\R^{N}))^{*}}\leq 2 \sqrt{\varepsilon}$.
\end{compactenum}
Here we have used the notations
$$
\dist_{\R \times H^{s}_{rad}(\R^{N})}((\theta, u), A)=\inf_{(t, v)\in A} \sqrt{|\theta-t|^{2}+\|u-v\|^{2}}
$$
for $A\subset \R\times H^{s}_{rad}(\R^{N})$,
and
$$
\nabla \tilde{I}(\theta, u):=\Bigl(\frac{\partial}{\partial \theta}\tilde{I}(\theta, u), \tilde{I}'(\theta, u)  \Bigr).
$$
\end{lem}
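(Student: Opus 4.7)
The plan is to deduce Lemma \ref{lem4.3} from Ekeland's variational principle applied in the space of admissible paths. I would first endow $\tilde{\Gamma}_{n}$ with the complete uniform distance
$$
d(\tilde{\gamma}_{1},\tilde{\gamma}_{2})=\max_{\sigma \in D_{n}} \|\tilde{\gamma}_{1}(\sigma)-\tilde{\gamma}_{2}(\sigma)\|_{\R \times H^{s}(\R^{N})},
$$
and verify that the oddness condition and the boundary prescription $(0,\gamma_{n})$ on $\mathbb{S}^{n-1}$ are preserved under uniform limits, so $(\tilde{\Gamma}_{n},d)$ is complete. The functional $\Phi(\tilde{\gamma}):=\max_{\sigma\in D_{n}}\tilde{I}(\tilde{\gamma}(\sigma))$ is continuous on this space and bounded below by $\tilde{b}_{n}=b_{n}$ (Lemma \ref{lembn}), while the hypothesis reads $\Phi(\tilde{\gamma})\leq \tilde{b}_{n}+\varepsilon$.

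Applying Ekeland's variational principle with parameter $\sqrt{\varepsilon}$ then yields a path $\hat{\gamma}\in\tilde{\Gamma}_{n}$ with $d(\hat{\gamma},\tilde{\gamma})\leq\sqrt{\varepsilon}$, $\Phi(\hat{\gamma})\leq\Phi(\tilde{\gamma})$, and the variational inequality
$$
\Phi(\tilde{\gamma}')\geq \Phi(\hat{\gamma})-\sqrt{\varepsilon}\,d(\tilde{\gamma}',\hat{\gamma}) \qquad \forall\, \tilde{\gamma}'\in\tilde{\Gamma}_{n}.
$$
Introduce the peak set $K(\hat{\gamma}):=\{\sigma\in D_{n}:\tilde{I}(\hat{\gamma}(\sigma))=\Phi(\hat{\gamma})\}$. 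Any $\sigma_{0}\in K(\hat{\gamma})$ will supply conditions (i) and (ii) of the Lemma automatically: (i) follows because $(\theta,u):=\hat{\gamma}(\sigma_{0})$ lies in $\hat{\gamma}(D_{n})$, whose Hausdorff distance to $\tilde{\gamma}(D_{n})$ is at most $\sqrt{\varepsilon}<2\sqrt{\varepsilon}$; (ii) follows from $b_{n}\leq\Phi(\hat{\gamma})\leq b_{n}+\varepsilon$.

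The crux is to select $\sigma_{0}\in K(\hat{\gamma})$ for which (iii) holds, which I would argue by contradiction. Assume $\|\nabla\tilde{I}(\hat{\gamma}(\sigma))\|_{\R\times(H^{s}_{rad})^{*}}>2\sqrt{\varepsilon}$ for every $\sigma$ in some symmetric open neighborhood $U$ of $K(\hat{\gamma})$ compactly contained in $D_{n}\setminus\mathbb{S}^{n-1}$. The separation from $\mathbb{S}^{n-1}$ is legitimate since $\tilde{I}(0,\gamma_{n}(\sigma))=I(\gamma_{n}(\sigma))<0$ on $\mathbb{S}^{n-1}$ by Lemma \ref{lem4.1}(iii), whereas $\Phi(\hat{\gamma})\geq\delta>0$ by Lemma \ref{lem4.4}(ii), so $K(\hat{\gamma})\cap\mathbb{S}^{n-1}=\emptyset$. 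Construct a locally Lipschitz, odd pseudo-gradient vector field $V$ on $\hat{\gamma}(\overline{U})\subset\R\times H^{s}_{rad}(\R^{N})$ with $\|V\|\leq 1$ and $\langle\nabla\tilde{I},V\rangle\leq -\sqrt{\varepsilon}$, multiply its pullback by an even cut-off $\chi:D_{n}\to[0,1]$ equal to $1$ on $K(\hat{\gamma})$ and vanishing outside $U$, and define the competing path
$$
\hat{\gamma}_{t}(\sigma):=\hat{\gamma}(\sigma)+t\,\chi(\sigma)\,V(\hat{\gamma}(\sigma)).
$$
Then $\hat{\gamma}_{t}\in\tilde{\Gamma}_{n}$ for small $t>0$, $d(\hat{\gamma}_{t},\hat{\gamma})\leq t$, and a first-order expansion near $K(\hat{\gamma})$ combined with the uniform continuity of $\tilde{I}$ on bounded sets produces, for all sufficiently small $t>0$, $\Phi(\hat{\gamma}_{t})\leq \Phi(\hat{\gamma})-\tfrac{3}{2}\sqrt{\varepsilon}\,t$, contradicting the Ekeland inequality. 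This yields the desired $\sigma_{0}$ and completes the proof.

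The main technical obstacle is the construction of the equivariant pseudo-gradient $V$ on the augmented space $\R\times H^{s}_{rad}(\R^{N})$ together with a cut-off $\chi$ compatible with both the oddness $\tilde{\gamma}(-\sigma)=(\theta(\sigma),-\eta(\sigma))$ and the boundary prescription on $\mathbb{S}^{n-1}$, as well as the max-function descent estimate $\Phi(\hat{\gamma}_{t})\leq \Phi(\hat{\gamma})-c\sqrt{\varepsilon}\,t$. The latter requires a uniform descent rate on the (compact) peak set and a neighborhood argument controlling the values of $\tilde{I}\circ\hat{\gamma}_{t}$ on the complement of $U$, where $\tilde{I}(\hat{\gamma})$ is already strictly below $\Phi(\hat{\gamma})$.
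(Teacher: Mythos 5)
Your approach is essentially the same as the paper's, which simply invokes the Ekeland-type argument of Jeanjean [J1]: equip the path class $\tilde\Gamma_n$ with the uniform metric, apply Ekeland's variational principle to $\Phi(\tilde\gamma)=\max_\sigma\tilde I(\tilde\gamma(\sigma))$, and then perturb along an equivariant pseudo-gradient near the peak set to obtain a contradiction. The paper's stated hypotheses ($\tilde I$ even in $u$, $b_n>0$, $\tilde I(0,\gamma_n(\sigma))<0$ on $\mathbb S^{n-1}$) play exactly the roles you identify: they guarantee $\tilde\Gamma_n$ is stable under the odd deformation and that the peak set stays away from $\mathbb S^{n-1}$. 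Your sketch correctly flags the only real technical work — the equivariant pseudo-gradient, the cut-off, and the uniform descent estimate on the max — and this is precisely what [J1]'s Lemma~2.3 and its proof supply; so you have reconstructed the cited argument rather than found a different one.
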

\begin{proof}
Since $\tilde{I}(\theta, -u)=\tilde{I}(\theta, u)$ for all $(\theta, u)\in \R\times H^{s}_{rad}(\R^{N})$,
we can deduce that $\tilde{\Gamma}_{n}$ is stable under the pseudo-deformation flow generated by $\tilde{I}(\theta, u)$.
Taking into account $\tilde{b}_{n}=b_{n}>0$, $I(0)=0$ and $\max_{\sigma \in D_{n}}\tilde{I}(0, \gamma_{n}(\sigma))<0$, the proof of Lemma goes as in \cite{J1}.

\end{proof}

\noindent
Thus we can deduce the following result:
\begin{thm}\label{thm4.2}
For any $n\in \N$ there exists a sequence $(\theta_{k}, u_{k})\subset \R \times H^{s}_{rad}(\R^{N})$ such that
\begin{compactenum}[(i)]
\item $\theta_{k} \rightarrow 0$;
\item $\tilde{I}(\theta_{k}, u_{k})\rightarrow b_{n}$;
\item $\tilde{I}'(\theta_{k}, u_{k})\rightarrow 0$ strongly in $(H^{s}_{rad}(\R^{N}))^{*}$;
\item $\frac{\partial}{\partial \theta}\tilde{I}(\theta_{k}, u_{k})\rightarrow 0$.
\end{compactenum}
\end{thm}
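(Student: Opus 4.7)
The plan is to apply the constrained Ekeland-type principle provided by Lemma \ref{lem4.3} along a sequence of minimax paths that are lifted from $\Gamma_{n}$ via the embedding $\gamma\mapsto(0,\gamma)$, and to exploit the identification $\tilde b_n=b_n$ of Lemma \ref{lembn} to transfer the estimates from the augmented space back to the required conclusions.

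First, I would pick $\varepsilon_{k}\searrow 0$, e.g.\ $\varepsilon_k=1/k^2$. By the definition of $b_{n}$ as an infimum, for each $k$ there exists $\gamma_{k}\in\Gamma_{n}$ with
\begin{equation*}
\max_{\sigma\in D_{n}} I(\gamma_{k}(\sigma))\leq b_{n}+\varepsilon_{k}.
\end{equation*}
Setting $\tilde\gamma_{k}(\sigma):=(0,\gamma_{k}(\sigma))$, the inclusion $\Gamma_{n}\subset\tilde\Gamma_{n}$ observed in the proof of Lemma \ref{lembn} together with $\tilde I(0,u)=I(u)$ gives $\tilde\gamma_{k}\in\tilde\Gamma_{n}$ and
\begin{equation*}
\max_{\sigma\in D_{n}} \tilde I(\tilde\gamma_{k}(\sigma))\leq \tilde b_{n}+\varepsilon_{k},
\end{equation*}
since $\tilde b_{n}=b_{n}$. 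Now I would apply Lemma \ref{lem4.3} to the path $\tilde\gamma_{k}$ with parameter $\varepsilon_{k}$; this yields a pair $(\theta_{k},u_{k})\in\R\times H^{s}_{rad}(\R^{N})$ satisfying the three conclusions of that lemma.

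Conclusion (ii) is immediate from Lemma \ref{lem4.3}(ii). For conclusions (iii) and (iv), the key point is that the Fréchet derivative $\nabla\tilde I(\theta,u)$ is, by definition, the pair consisting of the partial $\partial_\theta\tilde I$ and the partial $\tilde I'$ in $u$; the estimate $\|\nabla\tilde I(\theta_{k},u_{k})\|_{\R\times(H^{s}_{rad}(\R^{N}))^{*}}\leq 2\sqrt{\varepsilon_{k}}\to 0$ therefore controls each component separately and delivers both (iii) and (iv) at once. The only delicate point is (i), and this is precisely where the lifting trick pays off: since every element of $\tilde\gamma_{k}(D_{n})$ has first coordinate equal to $0$, the distance bound in Lemma \ref{lem4.3}(i) forces
\begin{equation*}
|\theta_{k}|=\dist_{\R}\bigl(\theta_{k},\{0\}\bigr)\leq \dist_{\R\times H^{s}_{rad}(\R^{N})}\bigl((\theta_{k},u_{k}),\tilde\gamma_{k}(D_{n})\bigr)\leq 2\sqrt{\varepsilon_{k}}\longrightarrow 0.
\end{equation*}

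The main obstacle, conceptually, is conclusion (i): nothing in a naive application of Ekeland's principle on $\R\times H^{s}_{rad}(\R^{N})$ guarantees any control of the $\theta$-component. The entire point of introducing $\tilde I$ on the augmented space is to generate, in addition to the usual Palais–Smale-like information, the extra datum $\partial_{\theta}\tilde I\to 0$ which encodes a Pohozaev-type information (via the scaling identity \eqref{4.2}). This information would be lost if one drifted far from $\theta=0$, and it is exactly the fact that the lifts $(0,\gamma_{k})$ saturate the minimax value $\tilde b_{n}=b_{n}$ that keeps the near-optimal pairs $(\theta_{k},u_{k})$ close to the slice $\{\theta=0\}$ and yields (i).
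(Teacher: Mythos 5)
Your proposal is correct and follows essentially the same route as the paper's proof: choose near-optimal paths $\gamma_k\in\Gamma_n$, lift them to $\tilde\gamma_k=(0,\gamma_k)\in\tilde\Gamma_n$ using $\tilde b_n=b_n$ from Lemma \ref{lembn}, apply Lemma \ref{lem4.3}, and read off (i) from the distance estimate combined with $\tilde\gamma_k(D_n)\subset\{0\}\times H^s_{rad}(\R^N)$, with (ii)--(iv) following directly from the lemma's remaining conclusions. The only cosmetic deviation is your choice $\varepsilon_k=1/k^2$ in place of the paper's $1/k$, which is immaterial.
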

\begin{proof}
For any $k\in \N$ there exists $\gamma_{k}\in \Gamma_{n}$ such that
$$
\max_{\sigma \in D_{n}} I(\gamma_{k}(\sigma))\leq b_{n}+\frac{1}{k}.
$$
Since $\tilde{\gamma}_{k}(\sigma)=(0, \gamma_{k}(\sigma))\in \tilde{\Gamma}_{n}$ and $\tilde{b}_{n}=b_{n}$, we have
$$
\max_{\sigma \in D_{n}} \tilde{I}(\tilde{\gamma}_{k}(\sigma))\leq \tilde{b}_{n}+\frac{1}{k}.
$$
By using Lemma \ref{lem4.3}, it follows the existence of $(\theta_{k}, u_{k})\subset \R \times H^{s}_{rad}(\R^{N})$ such that
\begin{align}
&\dist_{\R \times H^{s}_{rad}(\R^{N})}((\theta_{k}, u_{k}), \tilde{\gamma}_{k}(D_{n}))\leq \frac{2}{\sqrt{k}} \label{4.3} \\
&\tilde{I}(\theta_{k}, u_{k})\in \Bigl[b_{n}-\frac{1}{k}, b_{n}+\frac{1}{k} \Bigr] \label{4.4} \\
&\|\nabla \tilde{I}(\theta_{k}, u_{k}) \|_{\R \times (H^{s}_{rad}(\R^{N}))^{*}}\leq \frac{2}{\sqrt{k}}. \label{4.5}
\end{align}
Then, (\ref{4.3}) and $\tilde{\gamma}_{k}(D_{n})\subset \{0\}\times H^{s}_{rad}(\R^{N})$ yields $(i)$. Clearly, $(ii)$ follows by (\ref{4.4}), and  $(iii)$ and $(iv)$ are obtained as consequence of (\ref{4.5}).

\end{proof}

\noindent
Now, we investigate the boundedness and the compactness properties of the sequence $(\theta_{k}, u_{k})\subset \R \times H^{s}_{rad}(\R^{N})$ obtained in Theorem \ref{thm4.2}. More precisely, we are able to prove that
\begin{thm}\label{thmcomp}
Let $(\theta_{k}, u_{k})\subset \R \times H^{s}_{rad}(\R^{N})$ be a sequence satisfying $(i)$-$(iv)$ of Theorem \ref{thm4.2}.\\
Then  we have
\begin{compactenum}[(a)]
\item $(u_{k})$ is bounded in $H^{s}_{rad}(\R^{N})$;
\item $(\theta_{k}, u_{k})$ has a strongly convergent subsequence in $\R \times H^{s}_{rad}(\R^{N})$.
\end{compactenum}
\end{thm}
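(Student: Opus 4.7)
The strategy for (a) is to extract a Pohozaev-type identity from (iv), combine it with (ii) to bound the Gagliardo seminorm $[u_{k}]_{H^{s}(\R^{N})}$, and then exploit the coercive behavior of $-G$ near the origin coming from $(g1)$ to control $\|u_{k}\|_{L^{2}(\R^{N})}$. For (b), after extracting a weakly convergent subsequence, I promote the weak limit to a strong one by testing the approximate equation $\tilde I'(\theta_{k},u_{k})\to 0$ against $u_{k}-u$ and splitting $g(t)=-mt+g_{1}(t)$ to isolate the compact part.

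A direct computation on the functional yields
\begin{equation*}
N\,\tilde I(\theta_{k},u_{k})-\frac{\partial \tilde I}{\partial\theta}(\theta_{k},u_{k})=s\,e^{(N-2s)\theta_{k}}[u_{k}]_{H^{s}(\R^{N})}^{2},
\end{equation*}
so (ii), (iv) and $\theta_{k}\to 0$ from (i) yield $[u_{k}]_{H^{s}(\R^{N})}^{2}\to Nb_{n}/s$, and plugging back into (ii) one has $\int_{\R^{N}}G(u_{k})\,dx$ bounded as well. From $(g1)$ and $(g2')$ one derives the pointwise bound $-G(t)\geq \tfrac{m}{4}t^{2}-C|t|^{2^{*}_{s}}$ valid on all of $\R$; integrating and using the Sobolev embedding $\mathcal{D}^{s,2}(\R^{N})\hookrightarrow L^{2^{*}_{s}}(\R^{N})$ to dominate the critical term by the already-bounded seminorm gives that $\|u_{k}\|_{L^{2}(\R^{N})}^{2}$ is bounded, which proves (a).

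For (b), by Theorem \ref{Lions} I may pass to a subsequence with $u_{k}\rightharpoonup u$ in $H^{s}_{rad}(\R^{N})$, $u_{k}\to u$ in $L^{q}(\R^{N})$ for every $q\in(2,2^{*}_{s})$, and $u_{k}\to u$ a.e., while $\theta_{k}\to 0$ by (i); only strong convergence of the $u$-component requires attention. Testing (iii) against $u_{k}-u\in H^{s}_{rad}(\R^{N})$, which is bounded thanks to (a), and recalling $e^{(N-2s)\theta_{k}}, e^{N\theta_{k}}\to 1$, one gets
\begin{equation*}
\langle u_{k},u_{k}-u\rangle_{\mathcal{D}^{s,2}(\R^{N})}-\int_{\R^{N}}g(u_{k})(u_{k}-u)\,dx\to 0.
\end{equation*}
Weak convergence converts the first term into $[u_{k}-u]_{H^{s}(\R^{N})}^{2}+o(1)$. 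Writing $g(t)=-mt+g_{1}(t)$, where $(g1)$ and $(g2')$ force $g_{1}(t)/t\to 0$ at $0$ and $g_{1}(t)/|t|^{2^{*}_{s}-1}\to 0$ at infinity, the linear piece contributes $-m\|u_{k}-u\|_{L^{2}(\R^{N})}^{2}+o(1)$ by weak $L^{2}$ convergence on $u_{k}-u\rightharpoonup 0$. The nonlinear remainder $\int g_{1}(u_{k})(u_{k}-u)\,dx$ is dealt with exactly as in the proof of Theorem \ref{psc}: Lemma \ref{Strauss} with $P(t)=g_{1}(t)t$, $q_{1}=2$, $q_{2}=2^{*}_{s}$ handles $\int g_{1}(u_{k})u_{k}\,dx\to \int g_{1}(u)u\,dx$, and Lemma \ref{strauss} together with the uniform vanishing at infinity of radial $H^{s}$-bounded sequences handles $\int g_{1}(u_{k})u\,dx\to \int g_{1}(u)u\,dx$, so the two limits cancel.

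Putting the three estimates together gives $[u_{k}-u]_{H^{s}(\R^{N})}^{2}+m\|u_{k}-u\|_{L^{2}(\R^{N})}^{2}\to 0$, which is exactly strong convergence in the norm $\|\cdot\|$; combined with $\theta_{k}\to 0$ this proves (b). The step I expect to be the main obstacle is justifying $\int g_{1}(u_{k})u\,dx\to \int g_{1}(u)u\,dx$, since the test function $u$ is neither compactly supported nor bounded; this forces the use of the second, global part of Lemma \ref{strauss} (exploiting the radial decay at infinity) rather than its simpler local form, or alternatively a Vitali-type argument built on the joint $L^{2}\cap L^{2^{*}_{s}}$ control of $(u_{k})$.
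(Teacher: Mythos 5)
Your part (a) is correct and in fact a bit more economical than the paper's. Both arguments begin by combining (ii), (iv) and $\theta_{k}\to 0$ through the identity $N\tilde I-\partial_{\theta}\tilde I=s\,e^{(N-2s)\theta}[u]_{H^{s}}^{2}$ to get $[u_{k}]_{H^{s}}^{2}\to Nb_{n}/s$ and the boundedness of $\int G(u_{k})\,dx$. To then bound $\|u_{k}\|_{L^{2}}$ the paper invokes (iii), the inequality $\tfrac{m}{2}t^{2}+g(t)t\leq h(t)t$ from $(h3)$, the growth $|h(t)|\leq C|t|^{2^{*}_{s}-1}$ and the Sobolev inequality; you instead integrate the pointwise bound $-G(t)\geq\tfrac{m}{4}t^{2}-C|t|^{2^{*}_{s}}$ (which is exactly the integrated form of $(h3)$ together with $H(t)\leq C|t|^{2^{*}_{s}}$) against the already-controlled $[u_{k}]_{H^{s}}$. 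Either route works.

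Part (b) is a genuinely different route, and it has a gap which you correctly anticipate. The paper first shows $I'(u)\varphi=0$ using compactly supported $\varphi$, then tests against $u_{k}$ itself, writes $\int[\tfrac{m}{2}u_{k}^{2}+g(u_{k})u_{k}]\,dx=A_{k}-B_{k}$ with $A_{k}=\int h(u_{k})u_{k}\,dx$ and $B_{k}=\int[h(u_{k})u_{k}-\tfrac{m}{2}u_{k}^{2}-g(u_{k})u_{k}]\,dx$, handles $A_{k}$ by $(h6)$, and handles $B_{k}$ by Fatou's lemma, exploiting that its integrand is nonnegative by $(h3)$. Because of this sign structure the paper never needs to pass to a limit in an integral whose fixed factor is the possibly unbounded function $u$. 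Your route, testing against $u_{k}-u$, does require $\int g_{1}(u_{k})u\,dx\to\int g_{1}(u)u\,dx$, and here Lemma \ref{strauss} cannot be invoked: it demands $w$ bounded, and $u\in H^{s}_{rad}(\R^{N})$ need not be in $L^{\infty}$ at this stage of the argument. The Vitali alternative you mention does close the gap: from $|g_{1}(t)|\leq\varepsilon|t|+C_{\varepsilon}|t|^{2^{*}_{s}-1}$ one gets, for any measurable $A$,
\begin{equation*}
\int_{A}|g_{1}(u_{k})u|\,dx\leq\varepsilon\|u_{k}\|_{L^{2}}\|u\mathbf{1}_{A}\|_{L^{2}}+C_{\varepsilon}\|u_{k}\|_{L^{2^{*}_{s}}}^{2^{*}_{s}-1}\|u\mathbf{1}_{A}\|_{L^{2^{*}_{s}}},
\end{equation*}
which, with the $L^{2}$- and $L^{2^{*}_{s}}$-bounds on $(u_{k})$, yields uniform integrability and tightness of $(g_{1}(u_{k})u)_{k}$, and then a.e. convergence gives the limit. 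So the proof can be completed, but as written it relies on a lemma whose hypotheses are not met. One further small imprecision: $g_{1}(t)/t\to 0$ as $t\to 0$ does not follow from $(g1)$ alone (which only constrains the $\limsup$); it is the regularity $g\in\mathcal{C}^{1,\alpha}$ together with $g(0)=0$ (oddness) that upgrades the $\limsup$ in $(g1)$ to a genuine limit $g'(0)=-m$, and you should invoke that.
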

\begin{proof}
$(a)$ By using $(ii)$ and $(iv)$ of Theorem \ref{thm4.2}, we can see that
$$
\frac{1}{2}e^{(N-2s)\theta_{k}} [u_{k}]_{H^{s}(\R^{N})}^{2}-e^{N \theta_{k}}\int_{\R^{N}} G(u_{k}) dx \rightarrow b_{n}
$$
and
$$
\frac{N-2s}{2}e^{(N-2s)\theta_{k}} [u_{k}]_{H^{s}(\R^{N})}^{2}-Ne^{N \theta_{k}}\int_{\R^{N}} G(u_{k}) dx \rightarrow 0
$$
as $k\rightarrow \infty$.
Therefore we deduce that 
\begin{equation}\label{boundu}
[u_{k}]_{H^{s}(\R^{N})}^{2}\rightarrow \frac{N b_{n}}{s}
\end{equation}
and
\begin{equation*}\label{boundG}
\int_{\R^{N}} G(u_{k}) dx \rightarrow \frac{N-2s}{2s}b_{n}
\end{equation*}
as $k\rightarrow \infty$.\\
By Lemma \ref{lemh}, there exists $C>0$ such that
\begin{equation}\label{ioo}
|h(t)|\leq C|t|^{2^{*}_{s}-1} \mbox{ for all } t\in \R.
\end{equation}
Set 
$$
\varepsilon_{k}=\|\tilde{I}'(\theta_{k}, u_{k})\|_{(H^{s}_{rad}(\R^{N}))^{*}}.
$$
By $(iii)$ of Theorem \ref{thm4.2}, we deduce that $\varepsilon_{k} \rightarrow 0$ as $k \rightarrow \infty$, so we get 
\begin{equation}\label{io}
|\tilde{I}'(\theta_{k}, u_{k})u_{k}|\leq \varepsilon_{k}\|u_{k}\|.
\end{equation}
Taking into account  (\ref{boundu}), (\ref{ioo}), (\ref{io}), $(h3)$ of Lemma \ref{lemh} and by using the Sobolev inequality we have
\begin{align*}
e^{(N-2s)\theta_{k}} [u_{k}]_{H^{s}(\R^{N})}^{2}&+\frac{m}{2}e^{N \theta_{k}}\|u_{k}\|_{L^{2}(\R^{N})}^{2}\\
&\leq e^{N \theta_{k}}\int_{\R^{N}} \frac{m}{2}u^{2}_{k}+g(u_{k})u_{k} \, dx+\varepsilon_{k}\|u_{k}\|\\
&\leq e^{N \theta_{k}}\int_{\R^{N}} h(u_{k})u_{k} \, dx+\varepsilon_{k}||u_{k}||\\
&\leq C e^{N \theta_{k}}\|u_{k}\|_{L^{2^{*}_{s}}(\R^{N})}^{2^{*}_{s}}+\varepsilon_{k}\|u_{k}\|\\
&\leq CC_{*}e^{N \theta_{k}}[u_{k}]_{H^{s}(\R^{N})}^{2^{*}_{s}}+\varepsilon_{k}\|u_{k}\|\\
&\leq CC_{*}C' e^{N \theta_{k}}+\varepsilon_{k}\|u_{k}\|
\end{align*}
from which follows the boundedness of $\|u_{k}\|_{L^{2}(\R^{N})}$.

\noindent
Therefore, recalling \eqref{boundu}, we can deduce that $(u_{k})$ is bounded in $H^{s}_{rad}(\R^{N})$.\\
$(b)$ By $(a)$ we may assume that, up to a subsequence, 
\begin{align}\label{wc}
& u_{k} \rightharpoonup u \mbox{ in } H^{s}_{rad}(\R^{N}), \nonumber \\
& u_{k} \rightarrow u \mbox{ in } L^{q}(\R^{N}) \quad \forall q\in (2, 2^{*}_{s}),\\
& u_{k} \rightarrow u  \mbox{ a.e. } \R^{N}. \nonumber
\end{align}
%By using $(iii)$ of Theorem \ref{thm4.2} and (\ref{wc}) we can see that as $k \rightarrow \infty$
%$$
%e^{(N-2s) \theta_{k}} \iint_{\R^{2N}} \frac{(u_{k}(x)-u_{k}(y))}{|x-y|^{N+2s}} (\varphi(x)-\varphi(y)) \, dx dy-e^{N \theta_{k}}\int_{\R^{N}} g(u_{k}) \varphi \, dx \rightarrow 0
%$$
%for any $\varphi \in C^{\infty}_{0}(\R^{N})$.
%Since $C^{\infty}_{0}(\R^{N})$ is dense in $H^{s}_{rad}(\R^{N})$, it follows that as $k \rightarrow \infty$
By using $(iii)$ of Theorem \ref{thm4.2}, we can see that
\begin{equation}\label{5.8}
e^{(N-2s) \theta_{k}} \iint_{\R^{2N}} \frac{(u_{k}(x)-u_{k}(y))}{|x-y|^{N+2s}} (\varphi(x)-\varphi(y)) \, dx dy-e^{N \theta_{k}}\int_{\R^{N}} g(u_{k}) \varphi \, dx \rightarrow 0
\end{equation}
for any $\varphi \in \mathcal{C}^{\infty}_{0}(\R^{N})$.\\
%for any $\varphi \in H^{s}_{rad}(\R^{N})$.\\
Then $I'(u)\varphi=0$  for any $\varphi \in \mathcal{C}^{\infty}_{0}(\R^{N})$, and by the density of $\mathcal{C}^{\infty}_{0}(\R^{N})$ in $H^{s}_{rad}(\R^{N})$, we get
$I'(u)\varphi=0$  for any $\varphi \in H^{s}_{rad}(\R^{N})$. Moreover, we get
\begin{equation}\label{5.9}
[u]^{2}_{H^{s}(\R^{N})}=\int_{\R^{N}} g(u)u \, dx.
\end{equation}
Now, we observe that (\ref{5.8}) holds for any $\varphi \in H^{s}_{rad}(\R^{N})$ and $(u_{k})$ is bounded in $H^{s}(\R^{N})$. Therefore, taking $\varphi=u_{k}$ in (\ref{5.8}), we have 
\begin{equation*}
e^{(N-2s) \theta_{k}}[u_{k}]^{2}_{H^{s}(\R^{N})}-e^{N \theta_{k}}\int_{\R^{N}} g(u_{k}) u_{k} \, dx=o(1) \mbox{ as } k \rightarrow \infty.
\end{equation*}
Hence we obtain
\begin{align}\label{5.10}
&e^{(N-2s)\theta_{k}} [u_{k}]^{2}_{H^{s}(\R^{N})}+\frac{m}{2}e^{N \theta_{k}} \|u_{k}\|_{L^{2}(\R^{N})}^{2} \nonumber \\
&= e^{N \theta_{k}} \int_{\R^{N}} \frac{m}{2}u_{k}^{2}+g(u_{k})u_{k} \, dx+o(1)\nonumber \\
&=e^{N \theta_{k}} \int_{\R^{N}} h(u_{k})u_{k}dx- e^{N \theta_{k}} \int_{\R^{N}} [h(u_{k})u_{k}- \frac{m}{2} u_{k}^{2}-g(u_{k})u_{k}] \, dx+o(1)\nonumber \\
&=e^{N \theta_{k}} A_{k}-e^{N \theta_{k}}  B_{k}+o(1).
\end{align}
Now, by using $(h6)$ of Lemma \ref{lemh}, we can see that
\begin{equation}\label{5.11}
\lim_{k \rightarrow \infty}A_{k} = \int_{\R^{N}} h(u)u \, dx,
\end{equation}
and by $(h3)$ of Lemma \ref{lemh} and Fatou's Lemma, we get 
\begin{equation}\label{5.12}
\liminf_{k \rightarrow \infty} B_{k}\geq \int_{\R^{N}} \Bigl[h(u)u-\frac{m}{2}u^{2}-g(u)u \Bigr] dx.
\end{equation}
Taking into account \eqref{wc}, (\ref{5.9}), (\ref{5.10}), (\ref{5.11}) and (\ref{5.12}), we can infer that 
\begin{align*}
\|u\|^{2} \leq \limsup_{k \rightarrow \infty} \|u_{k}\|^{2}&=\limsup_{k \rightarrow \infty} \Bigl[e^{(N-2s)\theta_{k}} [u_{k}]^{2}_{H^{s}(\R^{N})}+\frac{m}{2}e^{N\theta_{k}} \|u_{k}\|^{2}_{L^{2}(\R^{N})}\Bigr] \\
&\leq \int_{\R^{N}} \Bigl[\frac{m}{2} u^{2}+g(u)u \Bigr] \, dx\\
&= \|u\|^{2}
\end{align*}
which gives that $u_{k}\rightarrow u$ in $H^{s}_{rad}(\R^{N})$.

\end{proof}

\noindent
Putting together  Theorem \ref{thmcomp}, Lemma \ref{lembn} and (\ref{bn}), we can provide the following multiplicity result.
\begin{thm}\label{multhm}
Under the assumptions of Theorem \ref{thmf}, there exist infinitely many solutions to (\ref{P}).
\end{thm}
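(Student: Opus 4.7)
The plan is to combine Theorems \ref{thm4.2} and \ref{thmcomp} with the identity $\tilde b_n=b_n$ from Lemma \ref{lembn} and the unboundedness $b_n\to\infty$ from (\ref{bn}) in order to produce, for each $n\in\N$, a nontrivial critical point $u_n$ of $I$ at level $b_n$. Since $I(u_n)=b_n\to\infty$, these critical points are pairwise distinct as soon as the levels are, and by passing to a strictly increasing subsequence of the $b_n$ one gets the required infinite family of radial solutions.

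First I would fix $n\in\N$ and apply Theorem \ref{thm4.2} to obtain a sequence $(\theta_k,u_k)\subset\R\times H^s_{rad}(\R^N)$ with $\theta_k\to 0$, $\tilde I(\theta_k,u_k)\to b_n$, $\tilde I'(\theta_k,u_k)\to 0$ in $(H^s_{rad}(\R^N))^*$, and $\tfrac{\partial}{\partial\theta}\tilde I(\theta_k,u_k)\to 0$. Theorem \ref{thmcomp} then yields boundedness of $(u_k)$ and, up to a subsequence, strong convergence $(\theta_k,u_k)\to(\theta^*,u^*)$ in $\R\times H^s_{rad}(\R^N)$. Property (i) of Theorem \ref{thm4.2} forces $\theta^*=0$, and by continuity of $\tilde I$ and $\tilde I'$ together with (ii) and (iii),
\[
\tilde I(0,u^*)=b_n>0,\qquad \tilde I'(0,u^*)=0\ \text{ in }\ (H^s_{rad}(\R^N))^*.
\]
Because $\tilde I(0,u)=I(u)$ and the partial derivative in the $u$ direction at $\theta=0$ coincides with $I'(u)$, this means $I(u^*)=b_n$ and $u^*$ is a nontrivial critical point of the restriction of $I$ to $H^s_{rad}(\R^N)$.

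In order to upgrade $u^*$ to a weak solution on the whole space $H^s(\R^N)$ I would invoke the principle of symmetric criticality of Palais: both the Gagliardo seminorm $[\cdot]_{H^s(\R^N)}$ and the potential term $\int_{\R^N}G(u)\,dx$ are invariant under the natural action of $O(N)$ on $H^s(\R^N)$, so $I$ is $O(N)$-invariant and its fixed point set is precisely $H^s_{rad}(\R^N)$. Hence any critical point of $I$ restricted to $H^s_{rad}(\R^N)$ is automatically a critical point of $I$ on the full $H^s(\R^N)$, and $u^*$ solves (\ref{P}) weakly.

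Finally, by (\ref{bn}) the sequence $(b_n)$ is unbounded, so choosing at each level $b_n$ a critical point $u_n$ produced as above and extracting a subsequence along which the levels are strictly increasing gives infinitely many distinct radial solutions $(u_n)$ of (\ref{P}) with $I(u_n)\to\infty$. The genuinely hard step has already been discharged in Theorem \ref{thmcomp}: the standard Palais--Smale argument would fail because the $L^2$-norm is not immediately controlled by $I$ and $I'$ alone, but the extra ``Pohozaev direction'' $\tfrac{\partial}{\partial\theta}\tilde I(\theta_k,u_k)\to 0$ coming from the augmented functional $\tilde I$ precisely supplies this missing bound. Once boundedness and strong convergence in $\R\times H^s_{rad}(\R^N)$ are secured, the remaining steps are routine continuity and symmetric-criticality arguments.
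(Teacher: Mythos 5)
Your proposal is correct and follows essentially the same route as the paper: fix $n$, apply Theorem \ref{thm4.2} to produce a Palais--Smale-type sequence for $\tilde I$, invoke Theorem \ref{thmcomp} for strong convergence to a critical point $u_n$ of $I$ with $I(u_n)=b_n$, and conclude from $b_n\to\infty$ that infinitely many of these levels (hence solutions) are distinct. The only genuine addition you make is to invoke the principle of symmetric criticality explicitly to pass from a critical point of $I|_{H^s_{rad}(\R^N)}$ to a weak solution tested against all of $H^s(\R^N)$; the paper leaves this step implicit (in Theorem \ref{thmcomp} it only obtains $I'(u)\varphi=0$ for $\varphi$ in the radial subspace), so your observation is a useful clarification rather than a departure from the argument.
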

\begin{proof}
Fix $n \in \N$, and let $(\theta_{k}, u_{k})\subset \R \times H^{s}_{rad}(\R^{N})$ be a sequence satisfying $(i)$-$(iv)$ of Theorem \ref{thm4.2}.
By using Theorem \ref{thmcomp}, we know that there exists $u_{n}\in H^{s}_{rad}(\R^{N})$ such that $u_{k} \rightarrow u_{n}$ in $H^{s}_{rad}(\R^{N})$ as $k \rightarrow \infty$.
Then $u_{n}$ satisfies
$$
I(u_{n})=\tilde{I}(0, u_{n})=b_{n} \mbox{ and } I'(u_{n})=\tilde{I}'(0, u_{n})=0.
$$
Since $b_{n} \rightarrow \infty$ as $n \rightarrow \infty$, we can conclude that (\ref{P}) admits infinitely many solutions characterized by a mountain-pass argument in $H^{s}_{rad}(\R^{N})$.
\end{proof}

\section{Mountain pass value gives the least energy level}

In this section we prove the existence of a positive solution to (\ref{P}) by using the mountain pass approach developed in the previous section. 
We recall that the existence of a positive ground state solution to (\ref{P}) has been obtained in \cite{CW}.  Here, we give an alternative proof of this fact, and in addition, we show that the least energy solution of (\ref{P}) coincides with the mountain-pass value. This is in clear accordance with the result obtained in the classic framework by Jeanjean and Tanaka in \cite{JT2}.
Moreover, we investigate the symmetry of solutions to (\ref{P}) by using the moving-plane method.\\
The main result of this section can be stated as follows.
\begin{thm}\label{thmone}
Under the assumptions $(g1)$-$(g3)$ there exists a classical positive solution $u$ of (\ref{P}). Moreover $u$ is radially decreasing and $u$ can be characterized by a mountain pass argument in $H^{s}_{rad}(\R^{N})$.
\end{thm}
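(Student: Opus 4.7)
The plan is to specialize the augmented-functional mountain pass of Section~3 to a nonsymmetric one-parameter setting, identify the resulting critical value with the least energy level by a dilation argument, and then extract the qualitative properties of the solution. Define
$$
b_{mp}=\inf_{\gamma\in\Gamma}\max_{t\in[0,1]} I(\gamma(t)), \qquad \Gamma=\bigl\{\gamma\in C([0,1],H^{s}_{rad}(\R^{N})):\gamma(0)=0,\ I(\gamma(1))<0\bigr\}.
$$
The class $\Gamma$ is nonempty by Lemma~\ref{lem4.1}(iii) and $b_{mp}\geq\delta>0$ by Lemma~\ref{lem4.1}(ii). Rerunning the construction of Section~3.3 with a single nonsymmetric path in place of the odd $n$-parameter families, and applying the obvious analogue of Lemma~\ref{lem4.3}, I obtain a sequence $(\theta_k,u_k)\subset\R\times H^{s}_{rad}(\R^{N})$ with $\theta_k\to 0$, $\tilde I(\theta_k,u_k)\to b_{mp}$, $\tilde I'(\theta_k,u_k)\to 0$ in $(H^{s}_{rad}(\R^{N}))^{*}$, and $\partial_{\theta}\tilde I(\theta_k,u_k)\to 0$. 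The last relation encodes the fractional Pohozaev identity \eqref{pepsiboom} asymptotically, and the boundedness and strong compactness arguments of Theorem~\ref{thmcomp} apply verbatim, producing a nontrivial critical point $u\in H^{s}_{rad}(\R^{N})$ of $I$ with $I(u)=b_{mp}$.

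The central new step is the identity $b_{mp}=m_{*}$, where $m_{*}$ denotes the least energy level. The inequality $b_{mp}\geq m_{*}$ is immediate. For the reverse direction, fix any nontrivial solution $v$ of \eqref{P} and set $v_{t}(x):=v(x/t)$ for $t>0$, $v_{0}:=0$. A direct change of variables gives
$$
I(v_t)=\frac{t^{N-2s}}{2}[v]^{2}_{H^{s}(\R^{N})}-t^{N}\int_{\R^{N}} G(v)\,dx,
$$
and \eqref{pepsiboom} applied to $v$ forces $t=1$ to be the unique positive critical point of $t\mapsto I(v_t)$; one checks that it is a strict global maximum with $I(v_1)=I(v)$ and that $I(v_t)\to -\infty$ as $t\to\infty$. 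Choosing $T$ so large that $I(v_T)<0$ and reparametrizing $s\mapsto v_{sT}$ on $[0,1]$ produces an admissible path in $\Gamma$ along which $\max I=I(v)$. Taking the infimum over $v$ yields $b_{mp}\leq m_{*}$, hence equality.

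To produce a positive classical solution, I apply the construction above to the cut-off $g^{+}$ defined by $g^{+}(t)=g(t)$ for $t\geq 0$ and $g^{+}(t)=0$ for $t<0$; this modification preserves the mountain pass geometry of Lemma~\ref{lem4.1}, the compactness of Theorem~\ref{thmcomp}, and the Pohozaev-based identification of levels. Any critical point $u$ of the associated functional is nonnegative: testing the weak equation against $u^{-}\in H^{s}(\R^{N})$ and invoking the pointwise inequality already exploited in Section~3.1 yields $[u^{-}]_{H^{s}(\R^{N})}^{2}\leq 0$. The strong maximum principle for $(-\Delta)^{s}$ then gives $u>0$, whence $g^{+}(u)=g(u)$ and $u$ solves \eqref{P}. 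Classical regularity follows by a standard bootstrap: the truncation of Section~3.1 provides $u\in L^{\infty}(\R^{N})$, so $g(u)\in L^{\infty}(\R^{N})\cap\mathcal{C}^{1,\alpha}$, and the fractional Schauder theory of \cite{CS1}, applicable thanks to the assumption $\alpha>\max\{0,1-2s\}$, upgrades $u$ to a $C^{2s+\alpha}$ classical solution.

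The radial monotonicity of $u$ is obtained through the moving plane method in integral form adapted to $(-\Delta)^{s}$. I expect this to be the main technical obstacle, since the nonlocal character prevents a direct Gidas-Ni-Nirenberg argument: the proof must compare $u$ with its reflections across moving hyperplanes using integral identities rather than pointwise ODE information. The decay of $u$ at infinity, inherited from $u\in H^{s}(\R^{N})$ together with the regularity estimates, allows the planes to be started from infinity, while the sign structure $g'(0)=-m<0$ near the origin together with the positivity and smoothness of $u$ suffice to close them at the limiting position, yielding that $u$ is radially symmetric and strictly decreasing about the origin.
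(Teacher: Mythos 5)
Your overall strategy (augmented space, asymptotic Pohozaev from $\partial_{\theta}\tilde I\to 0$, dilation paths through solutions) is the same as the paper's, but two substantive steps are either different or missing.

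\textbf{Positivity via cut-off versus nonnegative paths.} You obtain positivity by replacing $g$ with $g^{+}(t)=g(t)\mathbf{1}_{\{t\geq 0\}}$, running the machinery for the modified functional $I^{+}$, and then using the sign of the test-function pairing to kill $u^{-}$. The mechanism is correct, but this introduces a new functional whose mountain pass level $b_{mp}^{+}$ is not a priori equal to $b_{mp}$ or to the least energy level $m_{*}$ of the original problem: since $G^{+}(t)=0$ for $t<0$ while $G$ is even, $I^{+}$ and $I$ differ on sign-changing functions and the two path-infima need not coincide. You never close this loop. The paper avoids this issue entirely by exploiting $I(|u|)\leq I(u)$ (via $\bigl||x|-|y|\bigr|\leq|x-y|$): it replaces an almost-optimal path $\gamma_{k}$ by $|\gamma_{k}|$ inside the \emph{same} functional, derives the Palais--Smale sequence from Lemma~\ref{lem4.3} applied to $(0,|\gamma_{k}|)$, and reads off $\|u_{k}^{-}\|_{H^{s}}\to 0$ from the distance estimate. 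This keeps the level identification automatic.

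\textbf{Gap in the least energy identification.} To prove $b_{mp}\leq m_{*}$ you pick an arbitrary nontrivial solution $v$ and claim the dilation path $t\mapsto v(\cdot/t)$ lies in your admissible class $\Gamma\subset C([0,1],H^{s}_{rad}(\R^{N}))$. This requires $v$ to be radial, which is not known in advance. The paper handles this with Lemma~\ref{Byeonlem}, proving $b_{mp}=b_{mp,r}$ by combining Byeon's path-connectedness argument (showing $\{I<0\}$ is path connected in the full space $H^{s}(\R^{N})$) with the fractional Polya--Szeg\"o inequality and continuity of Schwarz symmetrization; only then is the dilation path through a possibly non-radial $v$ compared with $b_{mp,r}$. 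Your proposal skips this lemma, so the reverse inequality is unproved unless you add it.

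\textbf{Symmetry and regularity.} You correctly predict the structure of the moving-plane argument (integral form, reflection $w=(u_{\lambda}-u)^{+}$ on $\Sigma_{\lambda}$ and $(u_{\lambda}-u)^{-}$ on $\Sigma_{\lambda}^{c}$, decay to start the planes, $g'(0)<0$ to control the nonlinear term near infinity), but you do not supply the crucial nonlocal step, namely that the cross term $\iint \mathcal{G}(x,y)|x-y|^{-N-2s}\,dx\,dy\geq 0$ separating $[w]_{H^{s}}^{2}$ from the energy pairing; this is where the argument genuinely departs from the local Gidas--Ni--Nirenberg proof. Finally, note that $g^{+}\notin\mathcal{C}^{1,\alpha}$ at $0$, so the Cabr\'e--Sire regularity you invoke applies only after positivity is established and $u$ is recognized as a solution of the original, $\mathcal{C}^{1,\alpha}$, equation; this ordering should be stated explicitly since Harnack needs an a priori $L^{\infty}\cap C^{0,\beta}$ bound obtained before one knows $u>0$.
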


\noindent 
Let us consider the functionals $I: H^{s}_{rad}(\R^{N})\rightarrow \R$ and $\tilde{I}:\R\times H^{s}_{rad}(\R^{N})\rightarrow \R$ introduced in Section $3$:
$$
I(u)=\frac{1}{2} [u]_{H^{s}(\R^{N})}^{2}-\int_{\R^{N}} G(u) dx 
$$
and
%for any $u\in H^{s}_{rad}(\R^{N})$, and
$$
\tilde{I}(\theta, u)=\frac{1}{2}e^{(N-2s)\theta} [u]_{H^{s}(\R^{N})}^{2}-e^{N \theta}\int_{\R^{N}} G(u) dx. 
$$
%for $(\theta, u)\in \R\times H^{s}_{rad}(\R^{N})$.\\
We define the following minimax values:
\begin{align*}
&b_{mp, r}=\inf_{\gamma\in \Gamma_{r}} \max_{t\in [0, 1]} I(\gamma(t)), \\
&b_{mp}=\inf_{\gamma\in \Gamma} \max_{t\in [0, 1]} I(\gamma(t)), \\
&\tilde{b}_{mp, r}=\inf_{\tilde{\gamma}\in \tilde{\Gamma}_{r}} \max_{t\in [0, 1]} \tilde{I}(\tilde{\gamma}(t)),
\end{align*}
where
\begin{align*}
&\Gamma_{r}=\{\gamma\in \mathcal{C}([0, 1], H^{s}_{rad}(\R^{N})): \gamma(0)=0 \mbox{ and } I(\gamma(1))<0\},\\
&\Gamma=\{\gamma\in \mathcal{C}([0, 1], H^{s}(\R^{N})): \gamma(0)=0 \mbox{ and } I(\gamma(1))<0\}, \\
&\tilde{\Gamma}_{r}=\{\tilde{\gamma}=(\theta, \gamma)\in \mathcal{C}([0, 1], \R\times H^{s}_{rad}(\R^{N})): \gamma\in \Gamma_{r} \mbox{ and } \theta(0)=0=\theta(1)\}.
\end{align*}

\noindent
Then, we can see that the following result holds:
\begin{lem}\label{Byeonlem}
$$
b_{mp}=b_{mp, r}=b_{1}=\tilde{b}_{mp, r}.
$$
\end{lem}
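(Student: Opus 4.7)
The plan is to prove the chain of equalities by establishing three pairwise identities: $\tilde{b}_{mp,r} = b_{mp,r}$, $b_{mp,r} = b_1$, and $b_{mp} = b_{mp,r}$.

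The first identity follows exactly as in Lemma~\ref{lembn}. On one hand, $\gamma \mapsto (0, \gamma)$ embeds $\Gamma_r$ into $\tilde{\Gamma}_r$ with $\tilde{I}(0, \gamma(t)) = I(\gamma(t))$, so $\tilde{b}_{mp,r} \le b_{mp,r}$. On the other hand, each $\tilde{\gamma} = (\theta, \eta) \in \tilde{\Gamma}_r$ is inverted by $\gamma(t)(x) := \eta(t)(e^{-\theta(t)}x)$: the hypotheses $\theta(0) = \theta(1) = 0$ preserve both endpoints and give $\gamma \in \Gamma_r$, and the scaling identity~\eqref{4.2} yields $I(\gamma(t)) = \tilde{I}(\theta(t), \eta(t))$, whence $b_{mp,r} \le \tilde{b}_{mp,r}$.

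The second identity $b_{mp,r} = b_1$ leans on the fact that $I$ is even, since $G$ is even when $g$ is odd. For any odd $\gamma \in \Gamma_1$ this evenness forces $\max_{\sigma \in D_1} I(\gamma(\sigma)) = \max_{t \in [0,1]} I(\gamma(t))$; the restriction $\gamma|_{[0,1]}$ has endpoints $\gamma(0) = 0$ (by oddness) and $\gamma(1) = \gamma_1(1)$ with $I(\gamma(1)) < 0$ (from Lemma~\ref{lem4.1}), so it belongs to $\Gamma_r$, yielding $b_{mp,r} \le b_1$. For the reverse, I would modify an arbitrary $\eta \in \Gamma_r$ into an odd path $\tilde{\eta} \in \Gamma_1$ by reparametrizing $\eta$ onto $[0, 1/2]$, joining $\eta(1)$ to $\gamma_1(1)$ on $[1/2, 1]$ by a path $\phi$ contained in $\{I < 0\}$, and then reflecting oddly onto $[-1, 0]$. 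The construction of $\phi$ exploits the dilation identity
$$
\frac{d}{dt}I(u(\cdot/t)) = t^{N-2s-1}\!\left[(N-2s)\tfrac{1}{2}[u]^{2}_{H^{s}(\R^N)} - N t^{2s}\!\!\int_{\R^N}\!G(u)\,dx\right],
$$
which is strictly negative for $t \ge 1$ whenever $I(u) < 0$ (since then $\int G(u)\,dx > \tfrac{1}{2}[u]^{2}_{H^{s}(\R^N)}$). Hence both $\eta(1)$ and $\gamma_1(1)=\pi_1(1)(\cdot/\bar t)$ can be dilated arbitrarily deep into $\{I < 0\}$, and at a common sufficiently large scale a convex-combination bridge connects the two profiles inside $\{I < 0\}$. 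Since $\phi$ then lies strictly below $b_{mp,r}$, the maximum of $I \circ \tilde{\eta}$ equals that of $I \circ \eta$, and $b_1 \le b_{mp,r}$ follows.

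Finally, $b_{mp} = b_{mp,r}$. The inclusion $\Gamma_r \subset \Gamma$ gives $b_{mp} \le b_{mp,r}$ trivially. For the reverse I would invoke Schwarz symmetrization: replacing each $\gamma(t) \in \Gamma$ by its symmetric-decreasing rearrangement $\gamma(t)^\ast$ produces a radial path, and the fractional Pólya--Szegő inequality $[u^\ast]_{H^s(\R^N)} \le [u]_{H^s(\R^N)}$ together with $\int_{\R^N} G(u^\ast)\,dx = \int_{\R^N} G(|u|)\,dx = \int_{\R^N} G(u)\,dx$ (using equimeasurability and the evenness of $G$) yields $I(\gamma(t)^\ast) \le I(\gamma(t))$, giving $b_{mp,r} \le b_{mp}$. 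The hardest parts I anticipate are (a) the convex-combination bridge in Step~2, where one must verify that $\int G$ remains positive along the interpolating segment at the chosen dilation scale — this can be arranged either by an energy-monotonicity computation or by a homotopy inside $\{\int G > 0\}$; and (b) the continuity of the rearrangement map $t \mapsto \gamma(t)^\ast$ into $H^s_{rad}(\R^N)$, which is a classical but delicate matter and can be handled by approximating $\gamma$ with piecewise-constant paths before symmetrizing.
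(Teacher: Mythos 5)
Your overall decomposition into three pairwise identities mirrors the paper's logical structure, and two of the three steps are essentially the paper's argument: the equality $\tilde{b}_{mp,r}=b_{mp,r}$ follows exactly as in Lemma~\ref{lembn}, and the inequality $b_{mp,r}\leq b_{mp}$ via Schwarz rearrangement (using $[\,|u|\,]_{H^{s}}\leq[u]_{H^{s}}$, the fractional P\'olya--Szeg\H{o} inequality, equimeasurability, and the Almgren--Lieb continuity of the rearrangement map) is precisely the symmetrization device the paper uses in establishing $\bar{b}\geq b_{1}$. However, there is a genuine gap in your proof of $b_{1}\leq b_{mp,r}$, and it is not a matter of polish: the ``convex-combination bridge'' connecting $\eta(1)$ to $\gamma_{1}(1)$ inside $\{I<0\}\cap H^{s}_{rad}(\R^{N})$ does not work. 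If $v_{\theta}=\theta\,\eta(1)+(1-\theta)\,\gamma_{1}(1)$ and you dilate to $v_{\theta}(\cdot/t)$, then
$$
I\bigl(v_{\theta}(\cdot/t)\bigr)=\frac{t^{N-2s}}{2}[v_{\theta}]_{H^{s}(\R^{N})}^{2}-t^{N}\int_{\R^{N}}G(v_{\theta})\,dx,
$$
and there is no reason whatsoever that $\int G(v_{\theta})\,dx>0$ for every $\theta\in[0,1]$ --- $G$ is not concave, and the two endpoint profiles can have quite different shapes. Worse, if $\int G(v_{\theta_{0}})\,dx\leq 0$ for some $\theta_{0}$, then $I(v_{\theta_{0}}(\cdot/t))\to+\infty$ as $t\to\infty$, so the dilation pushes that point \emph{out} of $\{I<0\}$ rather than deeper in. Your suggested fixes (``energy-monotonicity computation'' or ``a homotopy inside $\{\int G>0\}$'') just restate the problem: you would still need to show that $\{\int G>0\}\cap H^{s}_{rad}$ is path-connected, which is not apparent.

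The paper avoids this trap by never attempting to connect two radial profiles directly inside $\{I<0\}$. Instead it proves path-connectedness of $B=\{u\in H^{s}(\R^{N}):I(u)<0\}$ in the \emph{full} space, following Byeon: translate one function far enough that its support is disjoint from the other's, so that $I(u_{1}+u_{2})=I(u_{1})+I(u_{2})<0$ and a broken line of translates and dilates stays in $B$. This translation trick is unavailable in $H^{s}_{rad}(\R^{N})$, since translation destroys radial symmetry. The full-space connectedness gives $b_{mp}=\bar{b}$ (where $\bar{b}$ is the minimax over paths in $H^{s}(\R^{N})$ with the \emph{fixed} endpoint $\gamma_{1}(1)$), and only then does the paper symmetrize paths in $\bar{\Gamma}$ to deduce $\bar{b}\geq b_{1}$. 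Combined with the trivial $b_{mp}\leq b_{mp,r}\leq b_{1}$, the chain closes. To repair your argument, replace the step ``$b_{1}\leq b_{mp,r}$'' with ``$b_{1}\leq b_{mp}$'', proved via the full-space Byeon connectedness plus symmetrization; your other two steps can then stay as written.
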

\begin{proof}
Arguing as in the proof of Lemma \ref{lembn}, we have $b_{mp, r}=\tilde{b}_{mp, r}$. 
Furthermore, it follows by the above definitions that $b_{mp}\leq b_{mp, r}\leq b_{1}$. In order to conclude the proof of Lemma, it is enough to show that $b_{1}\leq b_{mp}$.\\
Firstly, we show that 
\begin{equation}\label{pepsi}
b_{mp}=\bar{b},
\end{equation}
where
\begin{align*}
\bar{b}=\inf_{\gamma\in \bar{\Gamma}} \max_{t\in [0, 1]} I(\gamma(t)) 
\end{align*}
and
\begin{align*}
\bar{\Gamma}=\{\gamma\in \mathcal{C}([0, 1], H^{s}(\R^{N})): \gamma(0)=0 \mbox{ and } \gamma(1)=\gamma_{1}(1)\}.
\end{align*}
Here $\gamma_{1}$ is the path appearing in Lemma \ref{lem4.1} (with $n=1$). As observed in \cite{BL1, BL2}, we may assume that $\gamma_{1}$ satisfies the following properties: $\gamma(1)(x)\geq 0$ for any $x\in \R^{N}$, $\gamma_{1}(|x|)=\gamma(1)(x)$ and $r\mapsto \gamma_{1}(1)(r)$ is piecewise linear and non increasing.\\
Since $I(\gamma_{1}(1))<0$, it is clear that $\bar{\Gamma}\subset \Gamma$ which gives $b_{mp}\leq \bar{b}$. Now, we show that $b_{mp}\geq \bar{b}$. \\
For this purpose, it is suffices to prove that $B=\{u\in H^{s}(\R^{N}): I(u)<0\}$ is path connected. We proceed as in \cite{Byeon}.   
Take $u_{1}, u_{2}\in B$ and assume that $\supp(u_{1})\cup \supp(u_{2})\subset B_{R}$ for some $R>0$.
In fact, if $\phi\in \mathcal{C}^{\infty}_{0}(\R^{N})$ is such that $\phi(x)=0$ for $|x|\geq 2$ and $\phi(x)=1$ for $|x|\leq 1$, denoted by $\phi_{t}(x)=\phi(t x)$ for $t>0$, we can see that there exists $t_{0}>0$ such that $I(t u_{i}+(1-t)\phi_{t_{0}}u_{i})<0$ for any $t\in [0, 1]$ and $i=1, 2$.\\
Now, let us denote by $u_{i}^{t}(x)=u_{i}(\frac{x}{t})$ for $t>0$ and $i=1, 2$.\\
Then 
\begin{align*}\label{I(t)}
I(u_{i}^{t})=\frac{t^{N-2s}}{2}[u_{i}]_{H^{s}(\R^{N})}^{2}-t^{N} \int_{\R^{N}} G(u_{i}) \,dx
\end{align*}
so we get 
\begin{align*}
\frac{d}{dt} I(u_{i}^{t})= N t^{N-1} \Bigl(t^{-2s} \frac{1}{2} [u_{i}]_{H^{s}(\R^{N})}^{2}- \int_{\R^{N}} G(u_{i}) \,dx\Bigr)-st^{N-2s-1} [u_{i}]_{H^{s}(\R^{N})}^{2}. 
\end{align*}
In particular, for $t\geq 1$ we have
$$
\frac{d}{dt} I(u_{i}^{t})\leq N t^{N-1} I(u_{i})-s t^{N-2s-1} [u_{i}]_{H^{s}(\R^{N})}^{2}<0.
$$
Since $u_{i}\in B$, we deduce that $\int_{\R^{N}} G(u_{i}) dx>0$, 
so we can see that $I(u_{i}^{t})\rightarrow -\infty \mbox{ as } t \rightarrow \infty$.\\
Hence, we can choose $t_{1}>1$ and $t_{2}>1$ such that 
\begin{equation}\label{ut1}
I(u_{1}^{t_{1}})\leq -1-\max_{t\in [0, 1]} |I(t u_{2})|
\end{equation}
and
\begin{equation}\label{ut2}
I(u_{2}^{t_{2}})\leq -1-\max_{t\in [0, 1]} |I(t u_{1}^{t_{1}})|.
\end{equation}
Now, we define $u_{3}^{\theta}(x)=u_{1}^{t_{1}}(x+2\theta R(t_{1}+t_{2})e_{1})$ for $\theta\in [0, 1]$, where $e_{1}=(1, 0, \dots, 0)$.
It is clear that for all $\theta\in [0, 1]$
$$
I(u_{3}^{\theta})=I(u_{3}^{0})=I(u_{1}^{t_{1}})<0.
$$
Let us observe that $\supp(u^{1}_{3})\cap \supp(u_{2}^{t})=\emptyset$ for all $t\in [1, t_{2}]$, so by using (\ref{ut1}), we deduce that
$$
I(u_{3}^{1}+\theta u_{2})=I(u_{3}^{1})+I(\theta u_{2})<0
$$
for all $\theta\in [0, 1]$.
We also note that $I(u_{3}^{1}+u_{2}^{t})=I(u_{3}^{1})+I(u_{2}^{t})$ for $t\in [1, t_{2}]$, $I(u_{2}^{t})<0$ for all $t\in [1, t_{2}]$, and $I(\theta u_{3}^{1}+u_{2}^{t_{2}})=I(\theta u_{1}^{t_{1}})+I(u_{2}^{t_{2}})<0$ for $\theta\in [0, 1]$ in view of (\ref{ut2}).
Then, considering the following paths 
\begin{align*}
&\gamma_{1}(\theta)=u_{1}^{\theta} \mbox{ for } \theta\in [1, t_{1}], \\
&\gamma_{2}(\theta)=u_{3}^{\theta} \mbox{ for } \theta\in [0, 1],\\
&\gamma_{3}(\theta)=u_{3}^{1}+\theta u_{2} \mbox{ for } \theta\in [0, 1],\\
&\gamma_{4}(\theta)=u_{3}^{1}+u_{2}^{\theta} \mbox{ for } \theta\in [1, t_{2}],\\
&\gamma_{5}(\theta)=\theta u_{3}^{1}+u_{2}^{t_{2}} \mbox{ for } \theta\in [0, 1],\\
&\gamma_{6}(\theta)=u_{2}^{\theta} \mbox{ for } \theta\in [1, t_{2}]
\end{align*}
we can find a path connecting $u_{1}$ and $u_{2}$ on which $I$ is negative. Therefore, $B$ is path connected and $b_{mp}\geq \bar{b}$. This implies that \eqref{pepsi} holds. 

Now, observing that $I(u)=I(-u)$, $\gamma_{1}(1)\in H^{s}_{rad}(\R^{N})$ and $\gamma(-t)=-\gamma(t)$ for any $\gamma\in \Gamma_{1}$, we aim to show that 
\begin{align}\label{pepsi1}
\bar{b}=\inf_{\gamma\in \bar{\Gamma}_{r}} \max_{t\in [0, 1]} I(\gamma(t))(=b_{1})
\end{align}
where 
\begin{align*}
\bar{\Gamma}_{r}=\{\gamma\in \mathcal{C}([0, 1], H^{s}_{rad}(\R^{N})): \gamma(0)=0 \mbox{ and } \gamma(1)=\gamma_{1}(1)\}.
\end{align*}
It is clear that by definition $\bar{b}\leq b_{1}$. In what follows, we show that $\bar{b}\geq b_{1}$.\\
Take $\eta\in \bar{\Gamma}$ and we set $\gamma(t):=|\eta(t)|$. Clearly, $\gamma\in \mathcal{C}([0, 1], H^{s}(\R^{N}))$. Moreover, recalling that $G$ is even and by using the fact that $[|u|]_{H^{s}(\R^{N})}\leq [u]_{H^{s}(\R^{N})}$ for any $u\in H^{s}(\R^{N})$ (this inequality follows by $||x|-|y||\leq |x-y|$ for any $x, y\in \R$), we can see that  for any  $t\in [0,1]$ it holds
\begin{align}\begin{split}\label{pepsi100}
I(\gamma(t))&=\frac{1}{2}[|\eta(t)|]_{H^{s}(\R^{N})}^{2}-\int_{\R^{N}} G(|\eta(t)|)\, dx \\
&=\frac{1}{2}[|\eta(t)|]_{H^{s}(\R^{N})}^{2}-\int_{\R^{N}} G(\eta(t))\, dx \\
&\leq \frac{1}{2}[\eta(t)]_{H^{s}(\R^{N})}^{2}-\int_{\R^{N}} G(\eta(t))\, dx=I(\eta(t)).
\end{split}\end{align}
Moreover, by using $\gamma_{1}(1)\geq 0$, we can see that $\gamma(1)=|\eta(1)|=|\gamma_{1}(1)|=\gamma_{1}(1)$, so that $\gamma\in \bar{\Gamma}$.
Now, we denote by $\gamma^{*}(t)$ the Schwartz symmetrization $\gamma(t)$. Then, $\gamma^{*}(t)\in H^{s}_{rad}(\R^{N})$, and by using the continuity of $G$ and the fractional Polya-Szeg\"o inequality $[u^{*}]_{H^{s}(\R^{N})}\leq [u]_{H^{s}(\R^{N})}$ (see \cite{AL}), we can deduce that $I(\gamma^{*}(t))\leq I(\gamma(t))$ for any $t\in [0, 1]$. Since the rearrangement map is continuous everywhere on $H^{s}(\R^{N})$ (see \cite{AL}), we have $\gamma^{*}\in \mathcal{C}([0, 1], H^{s}_{rad}(\R^{N}))$, and by using the fact that $(\gamma_{1}(1))^{*}=\gamma_{1}(1)$, we can deduce that $\gamma^{*}\in \bar{\Gamma}_{r}$.\\
Therefore, in view of \eqref{pepsi100}, we have
\begin{align*}
b_{1}\leq \max_{t\in [0, 1]} I(\gamma^{*}(t))\leq \max_{t\in [0, 1]} I(\gamma(t)) \leq \max_{t\in [0, 1]} I(\eta(t)),
\end{align*}
which implies that $b_{1}\leq \bar{b}$. This shows that \eqref{pepsi1} is satisfied.
Putting together \eqref{pepsi} and \eqref{pepsi1}, we can infer that $b_{mp}=\bar{b}=b_{1}$.
This ends the proof of lemma.

\end{proof}

\noindent
Then, we are able to prove the following result.
\begin{thm}\label{exthm}
There exists a positive solution to (\ref{P}) such that $I(u)=b_{mp,r}$. Moreover, 
for any non-trivial solution $v$ to (\ref{P}), we have $b_{mp,r}\leq I(v)$. This means that $u$ is the least energy solution to (\ref{P}) and that $b_{mp,r}$ is the least energy level.
\end{thm}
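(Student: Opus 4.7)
The plan is to combine the symmetric mountain-pass construction of Section~3 at the first minimax level $b_1$ with a Pohozaev-type scaling argument in the spirit of \cite{JT2}. The scaling argument will simultaneously identify $b_{mp,r}$ as the least energy level and force the mountain-pass critical point to be sign-definite. Existence is immediate: Theorem \ref{multhm} applied with $n=1$ furnishes $u \in H^s_{rad}(\R^N) \setminus \{0\}$ solving (\ref{P}) with $I(u) = b_1$, and Lemma \ref{Byeonlem} identifies $b_1 = b_{mp,r} = b_{mp}$.

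The core tool is the following scaling calculation. For $w \in H^s(\R^N) \setminus \{0\}$ with $\int_{\R^N} G(w) \, dx > 0$, set $w_t(x) := w(x/t)$ and
$$
\phi_w(t) := I(w_t) = \frac{t^{N-2s}}{2} [w]^2_{H^s(\R^N)} - t^N \int_{\R^N} G(w) \, dx.
$$
A direct differentiation gives a unique maximizer $t_w > 0$ on $(0,\infty)$ with $t_w^{2s} = \tfrac{(N-2s)[w]^2_{H^s(\R^N)}}{2N \int G(w)\, dx}$ and maximal value $\phi_w(t_w) = \tfrac{s}{N} t_w^{N-2s} [w]^2_{H^s(\R^N)}$; substituting the formula for $t_w$ exhibits $\phi_w(t_w)$ as strictly increasing in $[w]^2_{H^s(\R^N)}$ at fixed $\int G(w) > 0$. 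Since $\phi_w(t) \to 0$ as $t \to 0^+$ and $\phi_w(t) \to -\infty$ as $t \to \infty$, the reparametrized dilation $\sigma \mapsto w_{T\sigma}$ (with $T$ large enough so that $t_w \leq T$ and $\phi_w(T) < 0$), extended continuously to $0$ at $\sigma = 0$, produces an admissible path in $\Gamma$ whose $I$-maximum equals $\phi_w(t_w)$; hence
$$
b_{mp} \leq \phi_w(t_w).
$$
Applying this with $w = v$ for an arbitrary non-trivial critical point $v$ of $I$, the Pohozaev identity \eqref{pepsiboom} yields $\int G(v) \, dx = \tfrac{N-2s}{2N}[v]^2_{H^s(\R^N)} > 0$, so $t_v = 1$ and $\phi_v(1) = I(v)$; therefore $b_{mp,r} = b_{mp} \leq I(v)$, and in particular $I(u) = b_{mp,r}$ is the least energy level.

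For positivity, apply the scaling to $w := |u|$: since $G$ is even we have $\int G(|u|) \, dx = \int G(u) \, dx > 0$, and $[\,|u|\,]^2_{H^s(\R^N)} \leq [u]^2_{H^s(\R^N)}$, so the monotonicity of $\phi_w(t_w)$ in $[w]^2_{H^s(\R^N)}$ gives $\phi_{|u|}(t_{|u|}) \leq \phi_u(1) = I(u) = b_{mp,r}$. Combined with the lower bound $b_{mp,r} \leq \phi_{|u|}(t_{|u|})$ from the same lemma (the dilation path stays in $H^s_{rad}(\R^N)$ since $|u|$ is radial), equality must hold, forcing $[\,|u|\,]^2_{H^s(\R^N)} = [u]^2_{H^s(\R^N)}$. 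The equality case of $||a|-|b|| \leq |a-b|$ then forces $u(x)u(y) \geq 0$ for a.e.\ pair, so $u$ is of constant sign; replacing $u$ by $-u$ if necessary, we take $u \geq 0$. Regularity from \cite{CS1} (applicable since $g \in \mathcal{C}^{1,\alpha}$ with $\alpha > \max\{0, 1-2s\}$) makes $u$ a classical solution, the strong maximum principle for $(-\Delta)^s$ then gives $u > 0$ on $\R^N$, and the moving-plane method for the fractional Laplacian (as in \cite{FQT,CW}) yields radial monotonicity about the origin.

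The main obstacle is the scaling lemma, which does double duty: via Pohozaev it delivers the sharp lower bound $b_{mp,r} \leq I(v)$ for every non-trivial critical point, and via the monotonicity of $\phi_w(t_w)$ in $[w]^2_{H^s(\R^N)}$ it rules out sign-change for the mountain-pass solution without ever requiring $|u|$ to itself be a critical point. A minor technical point is the continuous gluing of the dilation path at $\sigma = 0$, which is routine since $\phi_w(t) \to 0$ as $t \to 0^+$.
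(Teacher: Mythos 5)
Your proposal is correct, and the positivity argument is a genuinely different route from the paper's. For the \emph{least energy} inequality $b_{mp,r}\le I(v)$, you and the paper do essentially the same thing: build a dilation path $t\mapsto v(\cdot/t)$, use the fractional Pohozaev identity to show the path peaks at $t=1$ with value $I(v)$, and invoke $b_{mp}=b_{mp,r}$ from Lemma~\ref{Byeonlem}. For \emph{positivity} of the mountain-pass solution, however, the paper modifies the almost-optimal paths $\gamma_k$ by $|\gamma_k|$ (using $I(|w|)\le I(w)$), re-runs the Ekeland argument of Lemma~\ref{lem4.3} on the augmented functional $\tilde I$, and deduces $\|u_k^-\|\to 0$ so that the strong limit is nonnegative; you instead take the critical point $u$ already supplied by Theorem~\ref{multhm} with $n=1$ and rule out sign changes by a comparison argument. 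Your observation that, at fixed $b=\int G(w)\,dx>0$, the one-parameter maximum $\phi_w(t_w)=\tfrac{s}{N}\bigl(\tfrac{N-2s}{2bN}\bigr)^{(N-2s)/(2s)}\bigl([w]^2_{H^s}\bigr)^{N/(2s)}$ is strictly increasing in $[w]^2_{H^s}$, combined with $[\,|u|\,]^2_{H^s}\le [u]^2_{H^s}$, $\int G(|u|)=\int G(u)$, the Pohozaev-forced $t_u=1$, and the lower bound $b_{mp,r}\le\phi_{|u|}(t_{|u|})$, squeezes $[\,|u|\,]^2_{H^s}=[u]^2_{H^s}$ and hence forces constant sign via the equality case of $\bigl||a|-|b|\bigr|\le|a-b|$. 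This is cleaner in that it avoids re-running the deformation/compactness machinery with modified paths, at the cost of needing the explicit monotonicity formula and the Pohozaev identity for $u$ itself (both available here). The remaining regularity/positivity/monotonicity steps match Lemma~\ref{regularitylem} in the paper, and your appeal to a maximum-principle (rather than Harnack) argument for strict positivity is equally standard. I see no gap.
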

\begin{proof}
We argue as in Section $3$. Let $(\gamma_{k})\subset \Gamma_{r}$ be a sequence such that
\begin{equation}\label{6.55}
\max_{t\in [0, 1]} I(\gamma_{k}(t))\leq b_{mp, r}+\frac{1}{k}.
\end{equation}
Since $||x|-|y||\leq |x-y|$ for any $x, y\in \R$ and $G$ is even, we can see that $I(|u|)\leq I(u)$. Then, 
%$I$ and $\tilde{I}$ are even in $u$ and $\tilde{I}(\theta, |u|)\leq I(\theta, u)$, so
we may assume that $\gamma_{k}\in \Gamma_{r}$ in (\ref{6.55}) satisfies 
$$
\gamma_{k}(t)(x)\geq 0 \mbox{ for all } t\in \R, x\in \R^{N}.
$$
In view of Lemma \ref{Byeonlem}, we can deduce that
\begin{equation}\label{6.5}
\max_{t\in [0, 1]} \tilde{I}(0, \gamma_{k}(t))\leq \tilde{b}_{mp, r}+\frac{1}{k}=b_{1}+\frac{1}{k}.
\end{equation}
Therefore, by applying Lemma \ref{lem4.3} to $\tilde{I}$ and $(0, \gamma_{k})$, we can find $(\theta_{k}, u_{k})\subset \R\times H^{s}_{rad}(\R^{N})$ such that as $k\rightarrow \infty$
\begin{compactenum}[$(i)$]
\item $\dist_{\R\times H^{s}_{rad}(\R^{N})}((\theta_{k}, u_{k}), \{0\}\times \gamma_{k}([0, 1]))\rightarrow 0$;
\item $\tilde{I}(\theta_{k}, u_{k})\rightarrow b_{1}$;
\item $\|\nabla \tilde{I}(\theta_{k}, u_{k}) \|_{\R \times (H^{s}_{rad}(\R^{N}))^{*}}\rightarrow 0$.
\end{compactenum}
Taking into account $(ii)$, we can see that $\theta_{k}\rightarrow 0$ as $k \rightarrow \infty$, and
$$
\|u^{-}_{k}\|_{H^{s}(\R^{N})}\leq \dist_{\R\times H^{s}_{rad}(\R^{N})}((\theta_{k}, u_{k}), \{0\}\times \gamma_{k}([0, 1]))\rightarrow 0.
$$
Proceeding as in the proof of Theorem \ref{thmcomp}, we can prove that $u_{k}\rightarrow u$ in $H^{s}(\R^{N})$, $I'(u)=0$ and $I(u)=b_{1}$, for some $u\in H^{s}_{rad}(\R^{N})$ such that $u\geq 0$, $u\not\equiv 0$. 
Since $u\in \mathcal{C}^{0, \beta}(\R^{N})\cap L^{\infty}(\R^{N})$ (see Lemma \ref{regularitylem} before), we can apply the Harnack inequality \cite{CS1, FallFelli} to obtain $u>0$ in $\R^{N}$. This ends the proof of the first statement of theorem.

Now, let $v$ is a nontrivial solution to (\ref{P}) and we define 
\begin{equation*}
\gamma(t)(x)= 
\left\{
\begin{array}{ll}
v(\frac{x}{t}) &\mbox{ for } t>0\\
0 &\mbox{ for } t=0.   
\end{array}
\right.
\end{equation*}
Then we can see  
\begin{align*}
&\|\gamma(t)\|^{2}_{H^{s}(\R^{N})}=t^{N-2s}[v]^{2}_{H^{s}(\R^{N})}+t^{N}\|v\|^{2}_{2} \nonumber \\
&I(\gamma(t))= \frac{t^{N-2s}}{2} [v]^{2}_{H^{s}(\R^{N})}- t^{N} \int_{\R^{N}} G(v) \,dx. 
\end{align*}
It is clear that $\gamma\in C([0, \infty), H^{s}_{rad}(\R^{N}))$. \\
Since every weak solution of \eqref{P} satisfies the Pohozaev Identity \cite{CW}, we have
\begin{equation*}
\int_{\R^{N}}G(v) dx=\frac{N-2s}{2N} [v]^{2}_{H^{s}(\R^{N})}>0, 
\end{equation*}
so we can infer that
$$
\frac{d}{dt} I(\gamma(t))>0 \mbox{ for } t\in (0, 1) \,
\mbox{ and } \, \frac{d}{dt} I(\gamma(t))<0 \mbox{ for } t>1.
$$
Thus, after a suitable scale change in $t$, there exists a path $\gamma(t): [0, 1] \rightarrow H^{s}_{rad}(\R^{N})$ such that $\gamma(0)=0$, $I(\gamma(1))<0$, $v\in \gamma([0,1])$ and
$$
\max_{t\in [0,1]} I(\gamma(t)) =I(v). 
$$
Therefore, $b_{mp,r}=b_{1}$ is corresponding to a positive least energy solution to \eqref{P}.

\end{proof}

\noindent
Now, we study regularity and symmetry of solutions of (\ref{P}) in the case of ``positive mass".
\begin{lem}\label{regularitylem}
Let $u$ be a positive solution to $(-\Delta)^{s} u=g(u)$ in $\R^{N}$, where $g\in \mathcal{C}^{1, \alpha}(\R)$ is such that $g'(0)<0$ and $\lim_{|t|\rightarrow \infty} \frac{g(t)}{|t|^{2^{*}_{s}-1}}=0$.
Then $u\in \mathcal{C}^{2, \beta}(\R^{N})\cap L^{\infty}(\R^{N})$ is radially symmetric and strictly decreasing about some point in $\R^{N}$.
\end{lem}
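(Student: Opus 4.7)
The plan is to obtain the three conclusions ($L^\infty$ bound plus $\mathcal{C}^{2,\beta}$ regularity, decay at infinity, and radial symmetry/monotonicity) separately, in that order, since each relies on the previous one.

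First I would establish that $u\in L^\infty(\R^N)$ by a De Giorgi / Moser style iteration adapted to the fractional Laplacian. The assumption $\lim_{|t|\to\infty} g(t)/|t|^{2^{*}_{s}-1}=0$ together with $g\in \mathcal{C}^{1,\alpha}$ implies that for every $\varepsilon>0$ there exists $C_\varepsilon>0$ such that $|g(t)|\leq \varepsilon |t|^{2^{*}_{s}-1}+C_\varepsilon |t|$ for all $t\in\R$. One may then test the equation against $(u-k)_+^{2\gamma+1}$, use the Sobolev embedding $\mathcal{D}^{s,2}\hookrightarrow L^{2^{*}_{s}}$ and the fractional analogue of the Kato inequality, and iterate on the level $k$ to conclude that $u\in L^q(\R^N)$ for every $q\in[2,\infty]$; this is essentially the Brezis-Kato/Moser argument carried out, for instance, in the work of Felmer-Quaas-Tan or Servadei-Valdinoci. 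Once $u\in L^\infty(\R^N)$ is available, $g(u)\in L^\infty(\R^N)$ and we can feed this back into the equation: by the fractional Schauder estimates (Silvestre's regularity theory for $(-\Delta)^s$, or equivalently the extension formulation of Caffarelli-Silvestre), we get $u\in \mathcal{C}^{0,\beta}(\R^N)$ for some $\beta\in(0,1)$. A bootstrap using $g\in \mathcal{C}^{1,\alpha}$ with $\alpha>\max\{0,1-2s\}$ (so that $g(u)\in \mathcal{C}^{0,\alpha'}$ with $\alpha'+2s\notin \N$) upgrades this to $u\in \mathcal{C}^{2,\beta}(\R^N)$ for some $\beta\in(0,1)$, which is the reason the paper imposes exactly this regularity exponent on $g$.

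Next I would prove decay of $u$ at infinity. Because $g'(0)<0$, write $g(t)=-m t+\rho(t)$ with $m:=-g'(0)>0$ and $\rho(t)=o(t)$ as $t\to 0$. Since $u(x)\to 0$ as $|x|\to\infty$ (this follows from $u\in H^s(\R^N)\cap \mathcal{C}^{0,\beta}(\R^N)$), for every small $\delta>0$ we have $g(u(x))\leq -(m-\delta)u(x)$ outside a large ball $B_R$. A standard comparison argument with the Bessel-type kernel for $(-\Delta)^s+(m-\delta)$ (or a direct supersolution constructed from $|x|^{-(N+2s)}$, which is the sharp decay rate in the massive fractional case established by Felmer-Quaas-Tan and Fall-Felli) gives $0<u(x)\leq C(1+|x|)^{-(N+2s)}$, and in particular $u\to 0$ at infinity together with the integrability needed below.

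Finally, with $u$ positive, classical in $\mathcal{C}^{2,\beta}$, and with the polynomial decay just obtained, I would apply the moving plane method in the nonlocal setting to conclude radial symmetry and strict monotonicity. The most direct tool is the moving planes technique of Chen-Li-Ou via the integral representation $u(x)=\int_{\R^N} \mathcal{K}_s(x-y)g(u(y))\,dy$ (equivalently, the method developed by Felmer-Quaas-Tan and by Fall-Weth for solutions of semilinear fractional equations on $\R^N$): for each hyperplane $T_\lambda=\{x_1=\lambda\}$ define $u_\lambda(x):=u(x^\lambda)$ where $x^\lambda$ is the reflection of $x$ across $T_\lambda$, and set $w_\lambda:=u_\lambda-u$ in the half-space $\Sigma_\lambda=\{x_1<\lambda\}$. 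Using $g\in \mathcal{C}^{1,\alpha}$ one writes $(-\Delta)^s w_\lambda = c_\lambda(x) w_\lambda$ with $c_\lambda(x)=\int_0^1 g'(tu+(1-t)u_\lambda)\,dt$ which is bounded from above; then using the decay of $u$ and the nonlocal maximum principle in unbounded domains (Silvestre, Fall-Weth, Jarohs-Weth), the moving planes procedure can be started from $\lambda=-\infty$ and slid up until it stops, yielding that $u$ is symmetric with respect to some hyperplane $T_{\lambda_0}$ in the $x_1$-direction. Repeating in every direction forces $u$ to be radially symmetric about some point $x_0\in\R^N$, and the sliding argument also gives strict monotonicity in the radial direction.

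The main obstacle I anticipate is the execution of the moving plane step: in the nonlocal framework one cannot use the Hopf lemma or the standard Serrin corner argument in the way one does for $s=1$, and one must instead justify the starting of the procedure using the decay $u(x)=O(|x|^{-(N+2s)})$ together with a nonlocal maximum principle in $\Sigma_\lambda$, and the closing of the procedure using a strong maximum principle for the antisymmetric function $w_\lambda$. All the rest (Moser iteration, Schauder regularity, decay by sub-/super-solutions) is fairly routine machinery once $L^\infty$ is known.
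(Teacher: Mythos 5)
Your proposal reaches the right conclusions and the regularity part is essentially the paper's: an $L^\infty$ bound by Brezis--Kato/Moser iteration (the paper cites Proposition~5.1.1 of Dipierro--Medina--Valdinoci for this), then Schauder bootstrap to $\mathcal{C}^{2,\beta}$ via Cabr\'e--Sire (Lemma~4.4 in their paper), and $u(x)\to 0$ at infinity from $u\in\mathcal{C}^{0,\beta}\cap L^{2^*_s}$. Where you diverge from the paper is in the symmetry step, in two ways. First, you propose to establish the quantitative decay $u(x)\lesssim |x|^{-(N+2s)}$ via a comparison argument and then to run moving planes in the pointwise/integral-equation form (Chen--Li--Ou representation, or Fall--Weth/Felmer--Quaas--Tan maximum-principle versions), linearizing $(-\Delta)^s w_\lambda = c_\lambda(x)w_\lambda$ and invoking a nonlocal maximum principle in half-spaces. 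The paper instead works entirely in the weak formulation: it takes the function $w$ built from $(u_\lambda-u)^+$ on $\Sigma_\lambda$ and its negative part on the complement, tests the difference of the two equations against $w$, and uses the algebraic inequality from Dipierro--Montoro--Peral--Sciunzi to absorb the cross terms, reducing everything to the sign of $\int_{\Sigma_\lambda}(g(u_\lambda)-g(u))(u_\lambda-u)^+$. The only decay input the paper needs is the \emph{qualitative} fact $u\to 0$ at infinity, combined with $g'(0)<0$, to guarantee that the right-hand side is nonpositive far out and that $|\tilde{B}\cap\supp(u_\lambda-u)^+|$ is small when $\lambda$ is close to $\lambda_0$; no sharp decay rate is required, and no separate nonlocal strong maximum principle in unbounded domains has to be imported (the only pointwise maximum-principle ingredient is the elementary touching-point computation in Step~2). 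Your route is also viable --- the decay $|x|^{-(N+2s)}$ does hold in the positive-mass case and the Chen--Li--Ou machinery applies --- but it brings in more machinery than the paper uses and relies on the integral representation being available; the energy-method version the paper follows is self-contained in $H^s$ and sidesteps both the precise decay estimate and the half-space maximum principle.
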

\begin{proof}
The proof of regularity of $u$ is given in \cite{CW}. However, for reader's convenience, we give a proof of it here. Since $|g(t)|\leq C(1+|t|^{2^{*}_{s}-1})$ for any $t\in \R$ and $u\geq 0$, we can apply Proposition $5.1.1$ in \cite{DMV}, to deduce that $u\in L^{\infty}(\R^{N})$. Taking into account $g\in \mathcal{C}^{1, \alpha}(\R)$, we can use Lemma $4.4$ in \cite{CS1} to infer that $u\in \mathcal{C}^{2, \beta}(\R^{N})$ for some $\beta\in (0,1)$ depending only on $s$ and $\alpha$. Observing that $u\in \mathcal{C}^{0, \beta}(\R^{N})\cap L^{2^{*}_{s}}(\R^{N})$, we have $u(x)\rightarrow 0$ as $|x|\rightarrow \infty$.\\
%By using a Brezis-Kato argument, we know that if $u$ is a weak solution to $(-\Delta)^{s} u=a(x) u$ in $\R^{N}$, with $a\in L^{\frac{N}{2s}}(\R^{N})$, then $u\in L^{q}(\R^{N})$ for any $q\in [2, \infty)$. Since $|g(t)|\leq C(|t|+|t|^{2^{*}_{s}-1})$, we deduce that $g(u(x))\in L^{q}(\R^{N})$ for any $q\in [2, \infty)$, so we get $u\in W^{s, q}(\R^{N})$ for any $q<\infty$. By using Sobolev embedding, we can infer that $u\in \mathcal{C}^{0, \gamma}(\R^{N})\cap L^{\infty}(\R^{N})$ for some $\gamma\in (0,1)$. In particular $u$ decays to zero at infinity. 
%By using Lemma $4.4$ in \cite{CS1}, we deduce that $u\in \mathcal{C}^{2, \beta}(\R^{N})$ for some $\beta\in (0,1)$.\\
Now, we show that $u$ is radially symmetric and strictly decreasing about some point in $\R^{N}$.
In order to achieve our aim, we follow some ideas developed in \cite{DMPS, FW}.

For $\lambda\in \R$, we define the following sets
$$
\Sigma_{\lambda}=\{(x_{1}, x')\in \R^{N}: x_{1}>\lambda\}
$$
and
$$
T_{\lambda}=\{(x_{1}, x')\in \R^{N}: x_{1}=\lambda\}
$$
and we denote by $u_{\lambda}(x)=u(x^{\lambda})$, where $x^{\lambda}=(2\lambda-x_{1}, x')$.\\
We divide the proof into three steps.\\
Step $1$: We show that $\lambda_{0}=\sup\{\lambda: u_{\lambda}\leq u \mbox{ in } \Sigma_{\lambda}\}$ is finite.\\
Let us define
\begin{equation*}
w(x):= 
\left\{
\begin{array}{ll}
(u_{\lambda}-u)^{+}(x) &\mbox{ if } x\in \Sigma_{\lambda}\\
(u_{\lambda}-u)^{-}(x) &\mbox{ if } x\in \Sigma^{c}_{\lambda}. 
\end{array}
\right. 
\end{equation*}
Since $w(x)=-w(x_{\lambda})$ for any $x\in \R^{N}$, we get $\int_{\R^{N}} |w|^{2}\, dx=2\int_{\Sigma_{\lambda}} |w|^{2}\, dx$. By using the fact that if $\varphi\in \mathcal{D}^{s,2}(\R^{N})$ then $\varphi_{\lambda}\in \mathcal{D}^{s, 2}(\R^{N})$, we can deduce that $w\in H^{s}(\R^{N})$.

Hence, we can use $w$ as test function in the weak formulations of \eqref{P} and $(-\Delta)^{s}u_{\lambda}=g(u_{\lambda})$ in $\R^{N}$, and subtracting them, we obtain 
\begin{align*}
\iint_{\R^{2N}} \frac{[(u_{\lambda}(x)-u(x))-(u_{\lambda}(y)-u(y))]}{|x-y|^{N+2s}}(w(x)-w(y)) \,dx dy=\int_{\R^{N}} (g(u_{\lambda})-g(u))w \,dx.
\end{align*}
Recalling that $x^{-}=-(-x)^{+}$ for any $x\in \R$, we can note that
\begin{align*}
&\int_{\R^{N}} (g(u_{\lambda})-g(u))w \,dx=\int_{\Sigma_{\lambda}} (g(u_{\lambda})-g(u))(u_{\lambda}-u)^{+} \,dx+\int_{\Sigma^{c}_{\lambda}} (g(u_{\lambda})-g(u))(u_{\lambda}-u)^{-} \,dx \\
&=2 \int_{\Sigma_{\lambda}} (g(u_{\lambda})-g(u))(u_{\lambda}-u)^{+} \,dx.
\end{align*}
Therefore, we have
\begin{align}\label{FW3}
\iint_{\R^{2N}} \frac{[(u_{\lambda}(x)-u(x))-(u_{\lambda}(y)-u(y))]}{|x-y|^{N+2s}}(w(x)-w(y)) \,dx dy=2\int_{\Sigma_{\lambda}} (g(u_{\lambda})-g(u))(u_{\lambda}-u)^{+} \,dx.
\end{align}
%\begin{align}\label{FW3}
%\int_{\Sigma_{\lambda}} [(-\Delta)^{s}u_{\lambda}-(-\Delta)^{s}u] (u_{\lambda}-u)^{+} dx=\int_{\Sigma_{\lambda}} (g(u_{\lambda})-g(u))(u_{\lambda}-u)^{+} dx.
%\end{align}
Let us observe that
\begin{align}\label{FW4}
&\iint_{\R^{2N}} \frac{[(u_{\lambda}(x)-u(x))-(u_{\lambda}(y)-u(y))]}{|x-y|^{N+2s}}(w(x)-w(y)) \,dx dy \nonumber\\
&=\iint_{\R^{2N}} \frac{|w(x)-w(y)|^{2}}{|x-y|^{N+2s}} \,dx dy+\iint_{\R^{2N}} \frac{\mathcal{G}(x, y)}{|x-y|^{N+2s}}\, dx dy
\end{align}
where
$$
\mathcal{G}(x, y):=\left(((u_{\lambda}(x)-u(x))-(u_{\lambda}(y)-u(y))-(w(x)-w(y))\right) (w(x)-w(y)).
$$
Arguing as in the proof of Theorem $1.6$ in \cite{DMPS} (see formula $(3.29)$ there), we can show that 
$$
\iint_{\R^{2N}} \frac{\mathcal{G}(x, y)}{|x-y|^{N+2s}}\, dx dy\geq 0,
$$
which together with \eqref{FW3} and \eqref{FW4} yields
\begin{align}\label{FW5}
\iint_{\R^{2N}} \frac{|w(x)-w(y)|^{2}}{|x-y|^{N+2s}} \,dx dy\leq 2\int_{\Sigma_{\lambda}} (g(u_{\lambda})-g(u))(u_{\lambda}-u)^{+} \,dx.
\end{align}
Now, we know that $g'(0)<0$, so we can find $\delta>0$ such that $g'(t)<0$ for all $|t|<\delta$.
Since $u(x)\rightarrow 0$ as $|x|\rightarrow \infty$,  there exists $R>0$ such that $0<u(x)<\delta$ for all $|x|>R$.\\ 
Then, if $x\in \Sigma_{\lambda}$ and $\lambda<-R$, we deduce that
$u_{\lambda}(x)<\delta$, so we get
\begin{align}\label{FW6}
\int_{\Sigma_{\lambda}} (g(u_{\lambda})-g(u))(u_{\lambda}-u)^{+} dx\leq 0
\end{align}
for all $\lambda<-R$.\\
%On the other hand, by using Theorem \ref{Sembedding}, we find a constant $C>0$ such that
%\begin{align}\label{FW7}
%&\iint_{\R^{2N}} \frac{|w(x)-w(y)|^{2}}{|x-y|^{N+2s}} \,dx dy \geq C \left(\int_{\R^{N}} |w|^{\frac{2N}{N-2s}}\, dx\right)^{\frac{N-2s}{N}}. 
%\end{align}
%On the other hand, recalling that $(x-y)(x^{+}-y^{+})\geq |x^{+}-y^{+}|^{2}$ for any $x, y\in \R$, we have 
%\begin{align}\label{FW2}
%&\int_{\Sigma_{\lambda}} [(-\Delta)^{s}u_{\lambda}-(-\Delta)^{s}u] (u_{\lambda}-u)^{+} dx \nonumber\\
%&=\int_{\Sigma_{\lambda}} \, dx\int_{\R^{N}} \frac{[(u_{\lambda}-u)(x)-(u_{\lambda}-u)(y)]}{|x-y|^{N+2s}}[(u_{\lambda}-u)^{+}(x)-(u_{\lambda}-u)^{+}(y)] \, dy  \nonumber\\ 
%&\geq \int_{\Sigma_{\lambda}} \, dx\int_{\R^{N}} \frac{|(u_{\lambda}-u)^{+}(x)-(u_{\lambda}-u)^{+}(y)|^{2}}{|x-y|^{N+2s}} \, dy  \nonumber\\
%&=\int_{\Sigma_{\lambda}} [(-\Delta)^{\frac{s}{2}} (u_{\lambda}-u)^{+}|^{2} dx=\int_{\Sigma_{\lambda}} [(-\Delta)^{\frac{s}{2}} w|^{2} dx.
%\end{align}
Putting together \eqref{FW5} and \eqref{FW6} we deduce that $w$ is constant on $\R^{N}$. Since $w=0$ on $T_{\lambda}$, we can infer that $u_{\lambda}\leq u$ in $\Sigma_{\lambda}$  for all  $\lambda<-R$.
This implies that $\lambda_{0}\geq -R$.
Since $u$ decays at infinity, there exists $\lambda_{1}$ such that $u(x)<u_{\lambda_{1}}(x)$ for some $x\in \Sigma_{\lambda_{1}}$. Hence $\lambda_{0}$ is finite.\\
Step $2$: We prove that $u\equiv u_{\lambda_{0}}$ in $\Sigma_{\lambda_{0}}$. We assume by contradiction that $u\neq u_{\lambda_{0}}$ and $u\geq u_{\lambda_{0}}$ in $\Sigma_{\lambda_{0}}$. Suppose that there exists $x_{0}\in \Sigma_{\lambda_{0}}$ such that $u_{\lambda_{0}}(x_{0})=u(x_{0})$.\\
Then we can see that
\begin{equation}\label{4.11}
(-\Delta)^{s} u_{\lambda_{0}}(x_{0}) -(-\Delta)^{s}u(x_{0}) = g(u_{\lambda_{0}}(x_{0}))-g(u(x_{0}))=0. 
\end{equation}
On the other hand, observing that $|x_{0}-y|<|x_{0}-y_{\lambda_{0}}|$ for all $y\in \Sigma_{\lambda_{0}}$ and recalling that $u\neq u_{\lambda_{0}}$, we get
\begin{align*}
(-\Delta)^{s} &u_{\lambda_{0}}(x_{0}) - (-\Delta^{s})u(x_{0}) =-\int_{\R^{N}} \frac{u_{\lambda_{0}}(y) -u(y)}{|x_{0}-y|^{N+2s}}\, dy \\
&=-\int_{\Sigma_{\lambda_{0}}} (u_{\lambda_{0}}(y) - u(y)) \left( \frac{1}{|x_{0}-y|^{N+2s}} -\frac{1}{|x_{0} -y_{\lambda_{0}}|^{N+2s}}\right) \, dy>0,
\end{align*}
which contradicts (\ref{4.11}). Therefore $u>u_{\lambda_{0}}$ in $\Sigma_{\lambda_{0}}$.\\
In order to complete Step $2$, we only need to prove that $u\geq u_{\lambda}$ in $\Sigma_{\lambda}$ continues to hold when $\lambda>\lambda_{0}$ is close to $\lambda_{0}$.\\
%$\lambda_{0}<\lambda<\lambda_{0} +\varepsilon$, where $\varepsilon>0$ is small. \\
Fix $\varepsilon>0$ whose value will be chosen later, and take $\lambda \in (\lambda_{0}, \lambda_{0}+\varepsilon)$. Let $P=(\lambda, 0)$ and $B(P,R)$ be the ball centered at $P$ and with radius $R>1$ to be chosen later.\\
%such that $u(x)<\delta$ for $x\in B^{c}(P, R)$.
Set $\tilde{B}= \Sigma_{\lambda}\cap B(P,R)$. By using $w$ as test function in the equations for $u$ and $u_{\lambda}$, we have
\begin{align}\label{4.12}
&\iint_{\R^{2N}} \frac{[(u_{\lambda}(x)-u(x))-(u_{\lambda}(y)-u(y))]}{|x-y|^{N+2s}}(w(x)-w(y)) \,dx dy \nonumber\\
&=2 \int_{\Sigma_{\lambda}} (g(u_{\lambda}) -g(u))(u_{\lambda}-u)^{+} \, dx \nonumber \\
&=2\int_{\tilde{B}} (g(u_{\lambda})-g(u))(u_{\lambda}-u)^{+} \, dx+2\int_{\Sigma_{\lambda} \setminus\tilde{B}} (g(u_{\lambda}) -g(u))(u_{\lambda}-u)^{+} \, dx.
%\left( \int_{\Sigma_{\lambda}} |w|^{\frac{2N}{N-2s}} dx \right)^{\frac{N-2s}{N}}\leq C \int_{\Sigma_{\lambda}} (f(u_{\lambda}) -f(u))(u_{\lambda}-u)^{+} \, dx. 
\end{align}
Since $u$ goes to zero at infinity, there exists $R_{0}>0$ such that $u(x)<\delta$ for all $x\in B^{c}_{R_{0}}(0)$.
Choose $R>0$ such that $\Sigma_{\lambda} \setminus\tilde{B}\subset B^{c}(P, R)\subset B^{c}_{R_{0}}(0)$.
Therefore, $u_{\lambda}(x)<\delta$ for all $x\in \Sigma_{\lambda} \setminus\tilde{B}$, and we can see that 
$$
\int_{\Sigma_{\lambda} \setminus\tilde{B}} (g(u_{\lambda}) -g(u))(u_{\lambda}-u)^{+} \, dx\leq 0.
$$
This and (\ref{4.12}) yield
\begin{align*}
%\label{biofilm}
\iint_{\R^{2N}} \frac{[(u_{\lambda}(x)-u(x))-(u_{\lambda}(y)-u(y))]}{|x-y|^{N+2s}}(w(x)-w(y)) \,dx dy\leq 2\int_{\tilde{B}} (g(u_{\lambda}) -g(u))(u_{\lambda}-u)^{+} \, dx.
\end{align*}
Now, arguing as in Step $1$, and by using Theorem \ref{Sembedding}, we can find a positive constant $C_{0}>0$ such that
\begin{align*}
%\label{mel}
&\iint_{\R^{2N}} \frac{[(u_{\lambda}(x)-u(x))-(u_{\lambda}(y)-u(y))]}{|x-y|^{N+2s}}(w(x)-w(y)) \,dx dy \nonumber \\
&\geq \iint_{\R^{2N}} \frac{|w(x)-w(y)|^{2}}{|x-y|^{N+2s}} \,dx dy \nonumber \\
&\geq C_{0} \|w\|_{L^{2^{*}_{s}}(\Sigma_{\lambda})}^{2}.
\end{align*}
Then, we deduce that
\begin{align}\label{biofilm}
 \|w\|_{L^{2^{*}_{s}}(\Sigma_{\lambda})}^{2}\leq 2C_{0}^{-1}\int_{\tilde{B}} (g(u_{\lambda}) -g(u))(u_{\lambda}-u)^{+} \, dx.
\end{align}
By using H\"older inequality, we can see that
\begin{align}\label{4.13}
&2C_{0}^{-1}\int_{\tilde{B}} (g(u_{\lambda}) -g(u)) (u_{\lambda}-u)^{+} dx \leq C \int_{\tilde{B}} |w|^{2} \chi_{\supp(u_{\lambda} -u)^{+}} dx \nonumber \\
&=C|\tilde{B} \cap \supp(u_{\lambda}-u)^{+}|^{\frac{2s}{N}} \left(\int_{\tilde{B}} |w|^{\frac{2N}{N-2s}} dx\right)^{\frac{N-2s}{N}}. 
\end{align}
Taking into account $u>u_{\lambda_{0}}$ in $\Sigma_{\lambda_{0}}$ and the continuity of $u$, we can see that $u>u_{\lambda}$ on any compact $K\subset \Sigma_{\lambda}$, for $\lambda$ close to $\lambda_{0}$. This means that  $\supp(u_{\lambda}-u)^{+}$ is small in $\tilde{B}$ for $\lambda$ close to $\lambda_{0}$, so we can choose $\varepsilon>0$ such that 
\begin{equation}\label{ceps}
C|\tilde{B}\cap \supp(u_{\lambda}-u)^{+}|^{\frac{2s}{N}}<\frac{1}{2}.
\end{equation}
Putting together (\ref{biofilm}), (\ref{4.13}) and (\ref{ceps}) we deduce that $w=0$ in $\Sigma_{\lambda}$, which gives a contradiction. This concludes the proof of Step $2$. \\
Step $3$: By translation, we may say that $\lambda_{0}=0$. 
Thus, $u(-x_{1}, x')\geq u(x_{1}, x')$ for $x_{1}\geq 0$. A similar argument in $x_{1}<0$ shows that 
$u(-x_{1}, x')\leq u(x_{1}, x')$ for $x_{1}\geq 0$, so we have $u(-x_{1}, x')= u(x_{1}, x')$ for $x_{1}\geq 0$. Using the same procedure in any arbitrary direction, we deduce that $u$ is radially symmetric. \\
%Thus, we have that $u$ is symmetric  about $x_{1}$-axis, that is $u(-x_{1}, x')=u(x_{1}, x')$. Using the same argument in any arbitrary direction, we conclude that $u$ is radially symmetric. \\
Now, we prove that $u(r)$ is strictly decreasing in $r>0$.
Let us consider $0<x_{1}<\tilde{x}_{1}$ and let $\lambda=\frac{x_{1}+\tilde{x}_{1}}{2}$.
Then, as proved above, we have $w_{\lambda}(x)>0$ for $x\in \Sigma_{\lambda}$. 
Hence
$$
0<w_{\lambda}(\tilde{x}_{1}, x')=u_{\lambda}(\tilde{x}_{1}, x')-u(\tilde{x}_{1}, x')=u(x_{1}, x')-u(\tilde{x}_{1}, x')
$$  
that is $u(x_{1}, x')>u(\tilde{x}_{1}, x')$. This shows the monotonicity of $u$ in $x_{1}$ for $x_{1}>0$.
%Using the radial symmetry of $u$, we deduce the monotonicity of $u$.

\end{proof}

\noindent
We conclude this Section giving the proofs of Theorem \ref{thmone} and Theorem \ref{thmf}:

\begin{proof}[Proof of Theorem \ref{thmone}]
It follows by Theorem \ref{exthm} and Lemma \ref{regularitylem}. 

\end{proof} 

\begin{proof}[Proof of Theorem \ref{thmf}]
Putting together Theorem \ref{multhm}, Theorem \ref{thmone} and Lemma \ref{regularitylem}, we obtain the desired result.

\end{proof}

\section{The zero mass case: Proof of Theorem \ref{thmf2}}

This last section is devoted to the proof of the existence of a nontrivial solution of (\ref{P}) in the zero mass case. 
Let $\e_{0}=\frac{G(\xi_{0})}{\xi_{0}^{2}}>0$ and let us define $g_{\e}(t)=g(t)-\e t$ with $\e \in (0, \e_{0}]$.
Then $g_{\e}$ satisfies the assumption of Theorem \ref{thmone}, so we know that for any $\e \in (0, \e_{0}]$ there exists $u_{\e}\in H^{s}_{rad}(\R^{N})$ positive and radially decreasing  in $r=|x|$, such that $I_{\e}(u_{\e})=b_{\e}$ and $I_{\e}'(u_{\e})=0$, where the mountain pass minimax value $b_{\e}$ is defined as $b_{\e}=\inf_{\gamma\in \Gamma_{\e}} \max_{t\in [0, 1]}I_{\e}(\gamma(t))$ with $\Gamma_{\e}=\{\gamma\in \mathcal{C}([0, 1], H^{s}_{rad}(\R^{N})): \gamma(0)=0, I_{\e}(\gamma(1))<0\}$. 
Since $u_{\e}$ satisfies the Pohozaev identity \eqref{pepsiboom}, we can see that $\frac{s}{N}[u_{\e}]_{H^{s}(\R^{N})}^{2}=b_{\e}$. 
Now we prove that it is possible to estimate $b_{\e}$ from above independently of $\e$:
\begin{lem}\label{thmindependent}
There exists $b_{0}>0$ such that $0<b_{\e}\leq b_{0}$ for all $\e \in (0, \e_{0}]$.
\end{lem}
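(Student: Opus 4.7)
The strategy is to exhibit one continuous path $\gamma\in\mathcal{C}([0,1],H^{s}_{rad}(\R^N))$ that lies in $\Gamma_{\e}$ for every $\e\in(0,\e_0]$ simultaneously, and along which $I_{\e}$ admits an $\e$-independent upper bound. Writing $G_{\e}(t):=G(t)-\tfrac{\e}{2}t^2$ for the primitive of $g_{\e}$, the pointwise monotonicity $G_{\e}\le G_{\e'}$ for $\e'\le\e$ shows that it suffices to find a single $w\in H^{s}_{rad}(\R^N)$ with $\int_{\R^N}G_{\e_0}(w)\,dx>0$: then $B_{\e}:=\int_{\R^N}G_{\e}(w)\,dx\ge c:=\int_{\R^N}G_{\e_0}(w)\,dx>0$ for all $\e\in(0,\e_0]$.

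To build such $w$ I would borrow the Berestycki-Lions truncation: take $w$ radial, equal to $\xi_0$ on $B_R$, linearly interpolating to $0$ on $B_{R+1}\setminus B_R$, and vanishing outside. Such $w$ is Lipschitz with compact support, hence in $H^s(\R^N)$, and with $M_0:=\max_{[0,\xi_0]}|G|$ one has
\begin{equation*}
\int_{\R^N} G_{\e_0}(w)\,dx\ge G(\xi_0)|B_R|-M_0|B_{R+1}\setminus B_R|-\tfrac{\e_0}{2}\xi_0^2|B_{R+1}|.
\end{equation*}
Since $\e_0\xi_0^2=G(\xi_0)$, the first and third terms combine into $G(\xi_0)(|B_R|-\tfrac{1}{2}|B_{R+1}|)$, which is of order $R^N$, while the transition-layer error $M_0|B_{R+1}\setminus B_R|$ is of order $R^{N-1}$; positivity follows for $R$ large enough. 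Next, for the rescaling $w^t(x):=w(x/t)$, a change of variables gives $[w^t]^2_{H^s(\R^N)}=t^{N-2s}A$ and $\int G_{\e}(w^t)\,dx=t^N B_{\e}$ with $A:=[w]^2_{H^s(\R^N)}$, hence
\begin{equation*}
I_{\e}(w^t)=\tfrac{A}{2}t^{N-2s}-B_{\e}\,t^N.
\end{equation*}
Both $[w^t]^2_{H^s}$ and $\|w^t\|^2_{L^2}=t^N\|w\|^2_{L^2}$ vanish as $t\to 0^+$ (since $N>2s$), so setting $w^0:=0$ gives a continuous map $[0,\infty)\to H^{s}_{rad}(\R^N)$. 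Picking $T$ so large that $\tfrac{A}{2}T^{N-2s}-cT^N<0$ ensures $I_{\e}(w^T)<0$ uniformly in $\e$, and the reparametrized path $\gamma(\tau):=w^{T\tau}$ lies in $\Gamma_{\e}$ for every $\e\in(0,\e_0]$.

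A one-variable calculus on $\varphi_{\e}(t):=\tfrac{A}{2}t^{N-2s}-B_{\e}t^N$ locates a unique interior maximum and yields
\begin{equation*}
\max_{t\ge 0}I_{\e}(w^t)=\frac{s}{N}\Bigl(\frac{N-2s}{2N}\Bigr)^{\!(N-2s)/(2s)}\frac{A^{N/(2s)}}{B_{\e}^{(N-2s)/(2s)}}\le \frac{s}{N}\Bigl(\frac{N-2s}{2N}\Bigr)^{\!(N-2s)/(2s)}\frac{A^{N/(2s)}}{c^{(N-2s)/(2s)}}=:b_0,
\end{equation*}
so $b_{\e}\le\max_{\tau\in[0,1]}I_{\e}(\gamma(\tau))\le b_0$ uniformly in $\e$. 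Positivity $b_{\e}>0$ is already furnished by the mountain-pass geometry of $I_{\e}$ via Lemma \ref{lem4.1}, which applies to $g_{\e}$ because $g_{\e}$ satisfies $(g1)$ with positive mass (using $(h1)$ and $g\in\mathcal{C}^1$), $(g2')$ (since subtracting $\e t$ does not alter the leading critical behavior at infinity), and $(g3)$ (as $\e\le\e_0$ yields $G_{\e}(\xi_0)\ge G(\xi_0)/2>0$).

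The only genuinely delicate step is the construction of $w$: the calibration $\e_0=G(\xi_0)/\xi_0^2$ is sharp in the sense that the $L^2$-penalty $\tfrac{\e_0}{2}\xi_0^2$ consumes exactly half of the plateau's gain $G(\xi_0)$ per unit volume, so the positive leftover $\int G_{\e_0}(w)\,dx>0$ relies crucially on the transition annulus having volume of codimension one ($O(R^{N-1})$) relative to the plateau ($O(R^N)$). Everything afterwards is bookkeeping around an explicit scaling.
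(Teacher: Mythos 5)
Your proof is correct and follows essentially the same strategy as the paper: construct the Berestycki--Lions plateau function $w_R$, verify $\int G_{\e_0}(w_R)>0$ for $R$ large, exploit $G_\e\geq G_{\e_0}$ for $\e\leq\e_0$ to get a single test path valid for all $\e$, and use the scaling law $I_\e(w^t)=\frac{A}{2}t^{N-2s}-B_\e t^N$ to close the argument. There are two cosmetic differences worth flagging. First, you split $G_{\e_0}=G-\tfrac{\e_0}{2}t^2$ and exhibit the sharp calibration $\e_0\xi_0^2=G(\xi_0)$, whereas the paper estimates $\int G_{\e_0}(w_R)$ directly via $G_{\e_0}(\xi_0)|B_R|-\max_{[0,\xi_0]}|G_{\e_0}|\,|B_{R+1}\setminus B_R|$; both give the same $O(R^N)$ versus $O(R^{N-1})$ competition, and your version makes the role of $\e_0$ transparent. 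Second, your admissible path is the dilation $\tau\mapsto w^{T\tau}$ (with $w^0:=0$), which lets you compute the interior maximum of $\varphi_\e(t)=\tfrac{A}{2}t^{N-2s}-B_\e t^N$ in closed form and obtain an explicit $b_0$; the paper instead uses the straight-line path $t\mapsto t\bar e$ with $\bar e:=w^\tau$ fixed, and takes $b_0:=\sup_{\e}\max_{t\in[0,1]}I_\e(t\bar e)$ without an explicit formula. Your route is a little more quantitative but requires the (true, standard, though not totally trivial) fact that $t\mapsto w(\cdot/t)$ is $H^s$-continuous; the paper's straight-line path has continuity for free. The remaining verifications — that $g_\e$ satisfies $(g1)$--$(g3)$ so that $b_\e\geq\delta>0$ by the mountain-pass geometry — are also correctly identified.
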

\begin{proof}
For $R>1$, define 
\begin{equation*}
w_{R}(x)= 
\left\{
\begin{array}{ll}
\xi_{0} &\mbox{ for } |x|\leq R\\
\xi_{0}(R+1-r) &\mbox{ for } r=|x|\in [R, R+1]\\
0 &\mbox{ for } |x|\geq R+1. 
\end{array}
\right. 
\end{equation*}
It is clear that $w_{R}\in H^{s}(\R^{N})$. By the definition of $w_{R}$ and  $g_{\e}(t)=g(t)-\e t$, we deduce that for all $\e \in (0, \e_{0}]$
\begin{align*}
\int_{\R^{N}}G_{\e}(w_{R}) dx & \geq \int_{\R^{N}}G_{\e_{0}}(w_{R}) \,dx \\
&= G_{\e_{0}}(\xi_{0})|B_{R}|+\int_{\{r\leq |x|\leq r+1\}} G_{\e_{0}}(\xi_{0}(R+1-|x|)) \,dx \\
&\geq G_{\e_{0}}(\xi_{0})|B_{R}| - |B_{R+1}-B_{R} | \max_{t\in [0, \xi_{0}]} |G_{\epsilon_{0}}(t)| \\
&=\frac{\pi^{\frac{N}{2}}}{\Gamma(\frac{N}{2}+1)}[G_{\e_{0}}(\xi_{0})R^{N}-\max_{t\in [0, \xi_{0}]} |G_{\e_{0}}(t)| ((R+1)^{N}-R^{N})] \\
&\geq \frac{\pi^{\frac{N}{2}}}{\Gamma(\frac{N}{2}+1)} [G_{\e_{0}}(\xi_{0})-\max_{t\in [0, \xi_{0}]} |G_{\e_{0}}(t)| ((1+\frac{1}{R})^{N}-1)]R^{N} 
\end{align*}
so there exists $\bar{R}>0$ (independent of $\e$) such that $\int_{\R^{N}}G_{\e}(w_{R}) \,dx>0$ for all $R\geq \bar{R}$ and $\e \in (0, \e_{0}]$.
Then, by setting $w_{t}(x)=w_{\bar{R}}(\frac{x}{t})$ we have for any $\e \in (0, \e_{0}]$
\begin{equation}
I_{\e}(w_{t})=\frac{t^{N-2s}}{2}[w_{\bar{R}}]_{H^{s}(\R^{N})}^{2}-t^{N} \int_{\R^{N}} G_{\e}(w_{\bar{R}}) \,dx \rightarrow -\infty
\end{equation}
as $t \rightarrow \infty$.
Hence there exists $\tau>0$ such that $I_{\e}(w_{\tau})<0$ for any $\e \in (0, \e_{0}]$. We put $\bar{e}(x)=w_{\tau}(x)$.
%Then we define 
%$$
%\Gamma_{\epsilon}=\{\gamma\in C([0, 1], H^{s}_{r}(\R^{N})): \gamma(0)=0 \mbox{ and } \gamma(1)=\bar{e}\}.
%$$
Therefore, 
$$
I_{\e}(u_{\e})=b_{\e}\leq \sup_{\e \in (0, \e_{0}]}\max_{t\in [0, 1]} I_{\e}(t \bar{e})=:b_{0}
$$
for any $\e \in (0, \e_{0}]$.
\end{proof}

\noindent
In view of Lemma \ref{thmindependent}, we can infer that 
\begin{equation}\label{bound}
[u_{\e}]_{H^{s}(\R^{N})}^{2}= \frac{N}{s} b_{\e}\leq \frac{N}{s} b_{0} \mbox{ for all } \e \in (0, \e_{0}],
\end{equation}
and as a consequence, we can assume that as $\e \rightarrow 0$
\begin{align}\begin{split}\label{convergenze}
&u_{\e} \rightharpoonup u \mbox{ in } \mathcal{D}^{s, 2}_{rad}(\R^{N}) \\
&u_{\e} \rightarrow u \mbox{ in } L^{q}_{loc}(\R^{N}), \mbox{ for any } q\in [2, 2^{*}_{s})\\
&u_{\e} \rightarrow u \mbox{ a.e. in } \R^{N}. 
\end{split}\end{align} 
Since $u_{\e}$ is a weak solution to (\ref{P}), we know that 
\begin{equation}\label{wfeps}
\langle u_{\e}, \varphi \rangle_{\mathcal{D}^{s, 2}(\R^{N})} = \int_{\R^{N}} [g(u_{\e})- \e u_{\e}] \varphi \, dx,
\end{equation}
for any $\varphi \in \mathcal{C}^{\infty}_{0}(\R^{N})$. 
Taking into account  (\ref{bound}), (\ref{convergenze}), $(h1)$-$(h2)$, $\mathcal{D}^{s, 2}(\R^{N}) \subset L^{2^{*}_{s}}(\R^{N})$ we can apply the first part of Lemma \ref{strauss} with $P(t)=g(t)$, $Q(t)=|t|^{2^{*}_{s}-1}$, $v_{\e}=u_{\e}$, $v=u$ and $w=\varphi$ to pass to the limit in (\ref{wfeps}) as $\e \rightarrow 0$. Then we obtain
\begin{equation*}
\langle u, \varphi \rangle_{\mathcal{D}^{s, 2}(\R^{N})} = \int_{\R^{N}} g(u) \varphi  \, dx,
\end{equation*}
for any $\varphi \in \mathcal{C}^{\infty}_{0}(\R^{N})$. 
This means that $u$ is a weak solution to $(-\Delta)^{s}u=g(u)$ in $\R^{N}$. Now, we only need to prove that $u\not\equiv 0$.\\
We recall that $u_{\e}> 0$ and satisfies the Pohozaev identity \eqref{pepsiboom} with $G$ replaced by $G_{\e}$, that is
\begin{equation}\label{pepsiboom1}
\frac{N-2s}{2}[u_{\e}]_{H^{s}(\R^{N})}^{2}= N \int_{\R^{N}} \left(G(u_{\e}) - \frac{\e}{2} u_{\e}^{2}\right) \, dx. 
\end{equation}
Now, by using $(h_{1})$-$(h_{2})$, there exists $c>0$ such that
\begin{equation}\label{catarella}
G(t)\leq c|t|^{2^{*}_{s}} \mbox{ for all } t\in \R.
\end{equation}
Then, in view of \eqref{pepsiboom1} and \eqref{catarella}, we get 
\begin{align*}
[u_{\e}]_{H^{s}(\R^{N})}^{2}&\leq \frac{2N}{N-2s}\frac{\e}{2}\|u_{\e}\|^{2}_{L^{2}(\R^{N})}+[u_{\e}]_{H^{s}(\R^{N})}^{2}\\
&=\frac{2N}{N-2s}\int_{\R^{N}} G(u_{\e}) \,dx\\
&\leq \frac{2N}{N-2s} c\int_{\R^{N}} |u_{\e}|^{2^{*}_{s}} \,dx \\
&\leq C[u_{\e}]_{H^{s}(\R^{N})}^{2^{*}_{s}}
\end{align*}
so we can see that 
\begin{equation}\label{absurd}
[u_{\e}]_{H^{s}(\R^{N})}\geq c>0,
\end{equation}
for some $c$ independent of $\e$.\\
Next, we argue by contradiction, and we assume that $u=0$.
%Our aim is to show that by using the facts $u_{\epsilon}$ converges weakly to zero in ${D}_{r}^{s, 2}(\R^{N})$, $u_{\epsilon}$ is radial decreasing  and $[u_{\epsilon}]^{2}\leq \frac{N}{s}b_{0}$, we can deduce that 
We begin proving that
\begin{equation}\label{gepsilonzero}
\int_{\R^{N}} G^{+}(u_{\e}) \,dx \rightarrow 0 \mbox{ as } \e \rightarrow 0.
\end{equation}
%where $G_{+}=\max\{G, 0\}$ and $G_{-}=\max\{G, 0\}$.
%which gives a contradiction in view of (\ref{bnl}) and $\limsup_{\epsilon \rightarrow 0} [u_{\epsilon}]^{2}\geq c>0$.\\
By using Lemma \ref{radlem} with $t=2^{*}_{s}$, Theorem \ref{Sembedding} and \eqref{bound},  
%$\|u_{\e}\|_{L^{2^{*}_{s}}(\R^{N})}\leq S_{*} [u_{\e}]_{H^{s}(\R^{N})}\leq C$ 
we can see that, for any $x\in \R^{N}\setminus \{0\}$ and $\e \in (0, \e_{0}]$
\begin{align}\label{11.6}
|u_{\e}(x)|&\leq \left(\frac{N}{\omega_{N-1}}\right)^{\frac{1}{2^{*}_{s}}} |x|^{-\frac{N}{2^{*}_{s}}} \|u_{\e}	\|_{L^{2^{*}_{s}}(\R^{N})}\nonumber\\
&\leq \left(\frac{N}{\omega_{N-1}}\right)^{\frac{1}{2^{*}_{s}}} |x|^{-\frac{N}{2^{*}_{s}}} S_{*} [u_{\e}]_{H^{s}(\R^{N})}\nonumber \\
&\leq \left(\frac{N}{\omega_{N-1}}\right)^{\frac{1}{2^{*}_{s}}} |x|^{-\frac{N}{2^{*}_{s}}} S_{*} \sqrt{\frac{N}{s} b_{0}}= C  |x|^{-\frac{N}{2^{*}_{s}}}, 
\end{align}
where $C$ is independent of $\e$. 
By (\ref{11.6}), for any $\delta>0$ there exists $R>0$ such that $|u_{\e}(x)|\leq \delta$ for $|x|\geq R$. Moreover by assumption $(h_1)$, for any $\eta>0$ there exists $\delta>0$ such that 
\begin{equation*}
G^{+}(t)\leq \eta |t|^{2^{*}_{s}} \mbox{ for } |t|\leq \delta. 
\end{equation*} 
Hence for large $R$ we obtain the following estimate 
\begin{align*}
\int_{\R^{N}\setminus B_{R}} G^{+}(u_{\e})\, dx &\leq \eta \int_{\R^{N}\setminus B_{R}} |u_{\e}|^{2^{*}_{s}} dx\\
&\leq C \eta [u_{\e}]_{H^{s}(\R^{N})}^{2^{*}_{s}} \leq C \eta,
\end{align*}
uniformly in $\e\in (0, \e_{0}]$. On the other hand, by assumption $(h_2)$, for any $\eta>0$ there exists a constant $C_{\eta}$ such that 
\begin{equation*}
G^{+}(t) \leq \eta |t|^{2^{*}_{s}}+ C_{\eta} \mbox{ for all } t\in \R. 
\end{equation*} 
Now, we fix $\Omega \subset \R^{N}$ with sufficiently small measure $|\Omega|<\frac{\eta}{C_{\eta}}$. Then, for any $\e \in (0, \e_{0}]$
\begin{equation*}
\int_{\Omega} G^{+}(u_{\e})\, dx \leq \eta \int_{\Omega} |u_{\e}|^{2^{*}_{s}} \,dx + C_{\eta} |\Omega| \leq C\eta. 
\end{equation*}
By applying Vitali's convergence Theorem, we can deduce that (\ref{gepsilonzero}) is satisfied. \\
Therefore, by using the Pohozaev Identity \eqref{pepsiboom1} and the facts $G=G^{+}+G^{-}$ and $G^{-}\leq 0$, we have
\begin{align}\label{bnl}
[u_{\e}]_{H^{s}(\R^{N})}^{2}&\leq [u_{\e}]_{H^{s}(\R^{N})}^{2} -\frac{2N}{N-2s}\int_{\R^{N}} G^{-}(u_{\e}) \,dx+\frac{2N}{N-2s}\int_{\R^{N}} \frac{\e}{2}|u_{\e}|^{2} \,dx \nonumber \\
&=\frac{2N}{N-2s}\int_{\R^{N}} G^{+}(u_{\e}) \,dx 
\end{align}
which together with (\ref{gepsilonzero}) yields $0<c\leq \limsup_{\e \rightarrow 0} [u_{\e}]_{H^{s}(\R^{N})}^{2}=0$. This gives a contradiction in view of (\ref{absurd}).
Then $u\in \mathcal{D}^{s, 2}_{rad}(\R^{N})$ is a nontrivial weak solution to (\ref{P}). Arguing as in the proof of Lemma \ref{regularitylem}, we can see that $u\in \mathcal{C}^{0, \beta}(\R^{N})\cap L^{\infty}(\R^{N})$. From the Harnack inequality \cite{CS1, FallFelli} we conclude that $u>0$ in $\R^{N}$.

\end{document}